\documentclass[12pt]{article}
\usepackage{comment}
\usepackage{rotating}
\usepackage[utf8]{inputenc}
\usepackage{mathtools}
\usepackage{minted}
\usepackage{fullpage}
\usepackage{amsmath,amssymb,amsfonts,pdfpages}
\usepackage{amsthm}
\usepackage{enumerate}
\usepackage[utf8]{inputenc}
\usepackage{minted}
\usepackage{color}
\usepackage{graphicx}
\PassOptionsToPackage{hyphens}{url}\usepackage{hyperref}
\usepackage{caption,subcaption}
\usepackage[symbol]{footmisc}
\usepackage[useregional]{datetime2}
\usepackage{verbatim}
\usepackage{comment}
\usepackage[export]{adjustbox}
\usepackage{tikz}
\usepackage{algorithm}
\usepackage{algpseudocode}
\usepackage[sort]{natbib}
\usepackage[margin=0.6in]{geometry}
\usepackage{makecell}

\newcommand{\RR}{\mathbb{R}}
\newcommand{\EE}{\mathbb{E}}
\newcommand{\PP}{\mathbb{P}}
\newcommand{\NN}{\mathbb{N}}

\newtheorem{lemma}{Lemma}
\newtheorem{thm}{Theorem}
\newtheorem{corollary}{Corollary}
\newtheorem{proposition}{Proposition}

\theoremstyle{definition}

\newtheorem{model}{Model} 

\newtheorem{definition}{Definition}
\newtheorem{remark}{Remark}

\newcommand{\R}{\mathbb{R}}

\newcommand{\bA}{\mathbf{A}}

\newcommand{\bP}{\mathbf{P}}

\newcommand{\bS}{\mathbf{S}}

\newcommand{\bU}{\mathbf{U}}

\newcommand{\bW}{\mathbf{W}}
\newcommand{\bX}{\mathbf{X}}
\newcommand{\bY}{\mathbf{Y}}

\newcommand{\Xhat}{\hat{\bX}}

\newcommand{\Yhat}{\hat{\bY}}

\newcommand{\supp}{\operatorname{supp}}
\newcommand{\tr}{\operatorname{tr}}

\newcommand{\Var}{\operatorname{Var}}
\newcommand{\Cov}{\operatorname{Cov}}

\newcommand{\ind}{\perp\!\!\!\!\perp}

\newcommand{\inddmv}[1][]{\mathop{\mbox{ind-$d_{MV}^{#1}$}}}
\newcommand{\procwp}[1][2]{\mathop{\mbox{proc-$W_#1$}}}
\newcommand{\procwphat}[1][2]{\mathop{\mbox{proc-$\hat{W}_#1$}}}

\makeatletter
\let\@fnsymbol\@arabic
\makeatother

\usepackage{marginnote}

\begin{document}

\title{Vertex misalignment and changepoint localization in network time series}

\author{Tianyi Chen\footnote{Co-first author}, Mohammad Sharifi Kiasari\footnotemark[1], Sijing Yu, Youngser Park, \\ Avanti Athreya, Vince Lyzinski, Carey Priebe, Zachary Lubberts}


\date{}

\maketitle

\begin{abstract}


Inference for time series of networks often relies on accurate vertex correspondence between network realizations at different times. In practice, however, such vertex alignments can be misspecified or unknown. We study the impact of vertex alignment on changepoint localization for dynamic networks through two illustrative models, each with a similar changepoint, with
the key distinction being whether changepoint information is contained in marginal or joint distributions of the time-varying latent positions.
We compare localization techniques ranging from the simple network statistic of average degree to the modern procedure of Euclidean mirrors. 
In one model, vertex misalignment causes little error, and in the other, it impairs localization in ways that cannot be corrected through graph matching or optimal transport, which we show are closely related in this setting. 
Our results demonstrate that robust network inference necessitates reckoning with the subtle interplay of marginal and joint information in the observed network time series.

\end{abstract}

\section{Introduction}

Changepoint localization and detection in time series data is a classical problem in statistical inference \cite{shumway2000time,box2015time,kedem2005regression, verzelen2023optimal, padilla2021optimal}. The analogue for network data---namely, identifying if and when a time series of networks undergoes a distributional change---is a rapidly developing subfield of network inference, with theoretical and practical relevance across disciplines. However, statistical changepoint analysis for networks presents distinct challenges, because network data is inherently non-Euclidean in its structure and because network evolution involves dependencies between underlying network features as well as correspondences across vertices over time.  Theoretical facets of the analysis of time-varying networks encompass random matrix theory and eigenvector fluctuations \cite{cape2019signal,agterberg2022entrywise,tang2025eigenvector}, tensor analysis 
\cite{macdonald2022latent,pensky2025signed,chen2022analysis}, minimax bounds and thresholds of detectability \cite{cai2021optimal,zhang2018tensor,lei2024computational}, and optimization and graph matching \cite{fishkind2019seeded,arroyo2021graph,saad2021graph, BergmannHerzogJasa:2024}. On a practical level, the problem of identifying shifts in network dynamics has broad applicability, for example in organizational networks for corporations \cite{zuzul2021dynamic}; neural development for organoids \cite{chen_worm}; and transportation and logistical networks \cite{agterberg2022joint}. 

The non-Euclidean nature of network data can be addressed by considering lower-dimensional spectral embeddings of multiple networks or a network adjacency tensor \cite{paul2020spectral,jing2021community,pantazis2022importance,macdonald2022latent,wang2023multilayer,athreya2025euclidean,agterberg2022joint,padilla2022change,gallagher2021spectral}, but this requires accurate alignment between vertices across networks.
In \cite{athreya2025euclidean}, the authors develop the methodology of Euclidean mirrors, which involves the construction of a dissimilarity measure between time-indexed pairs of adjacency matrices $\bA_{t}$, $\bA_{t'}$ of a network time series. 
For certain types of dissimilarity measures and under certain model assumptions, a low-dimensional embedding of the matrix of dissimilarities---known as {\em Euclidean mirror} or {\em iso-mirror}---can help identify when the sequence of underlying networks undergoes a significant change \cite{chen2023discovering, chen2024euclidean}. Existing approaches to dynamic networks, including that of the Euclidean mirror, often presume a fixed vertex set and a known correspondence across vertices. In practice, both of these assumptions can fail. 
As networks evolve, vertex sets may themselves change (see, for example, the cases in \cite{kivela2014multilayer} that do not have equal size across layers); and even when vertex sets remain the same, correct alignments may be unknown, errorfully observed, or corrupted \cite{priebe2015statistical, lyzinski16:_infoGM, arroyo2021maximum}.
For example, in the analysis of neural organoid time series, the vertex set of neural cells expands as the organoid grows, while constant agitation due to in-vitro culturing makes tracking vertex correspondence over time impossible.
In \cite{chen2023discovering}, the authors mitigate the lack of vertex correspondence in organoids by using graph matching on the padded adjacency matrices and applying Euclidean mirrors for exploratory data analysis on organoid time series. 
The empirical insights so obtained lead to further theoretical questions.
There is a large body of work devoted to recovering an unknown vertex correspondence across networks---this is the well-studied graph matching problem \cite{gms1,gms2,gms3}---
and the effect of label errors has been studied in the context of graph and vertex classification
\cite{vogelstein2015shuffled,chen_worm}, joint graph clustering \cite{racz2021correlated,racz2,chai2024efficient}, and two-sample graph hypothesis testing \cite{saxena2025lost,qi2025asymptotically}.
The present work is, to our knowledge, the first to systematically consider the effect of label error in time-series analysis and change-point localization.

To formalize these ideas, we consider a collection of random networks on the same vertex set, indexed by time. For this network time series, we assume that associated to each vertex $i$ in each network at time $t$ is a (typically unobserved) low-dimensional vector, called the {\em latent position}, and that the probability of connections between nodes in the network depends on these latent positions. We assume these latent positions evolve according to some underlying stochastic process. A central question is whether, if the underlying stochastic process---called a {\em latent position process} (LPP)---
changes in some significant way, we can detect this by appropriate analysis of the observed network data itself. The observed network data consists of a time-indexed collection of adjacency matrices, each entry of which is a Bernoulli random variable with success probability determined by the latent positions of the respective nodes. The observed network adjacency $\bA_t$ at time $t$ is a noisy version of the matrix of connection probabilities $\bP_t$ at time $t$. The matrix of connection probabilities $\bP_t$ is itself a compression of the information in the underlying realizations of the latent position processes. We do not observe the $\bP_t$ matrices, nor the realizations of the underlying latent position process. However, because the edge distribution of a latent position network is governed by the latent position process, changepoints for the distribution of the network can be associated to changepoints in the latent position process.

Our goal is to examine the impact of vertex misalignments for changepoint localization in a conditionally-independent time series of networks governed by a latent position process.
To that end, we prove several results on the impact of vertex shuffling on latent position processes, specifically describing how this weakens dependence across time. We further prove key results relating graph matching and optimal transport in this setting, and the impact of these two procedures for rectifying information loss from vertex mislabeling. We then  describe two distinct latent position processes, dubbed {\em London} and {\em Atlanta}. In both latent position processes, there is a concrete distributional change at a specific timepoint.
In each case, the changepoint in the underlying respective LPP gives rise to a corresponding  changepoint in the associated time series of networks. 
In the London model, the vertex labels are effectively uninformative and this changepoint can be localized even when vertex correspondence is lost. 
Moreover, several network statistics or functionals, ranging from simple ones such as average degree to more complicated ones involving Euclidean mirrors, can localize the changepoint. In the Atlanta model, by contrast, a shuffled vertex correspondence can severely degrade changepoint localization, sometimes irrecoverably.
Here, the Euclidean mirror methodology can correctly localize the changepoint under a known correspondence, and retains some signal even under partial vertex shuffling. 
However, in the presence of substantial shuffling, we show that these localization approaches will generally fail. 
Further, efforts to reconstruct the latent alignment, such as via graph matching or optimal transport, provide scant improvement.  

While both models are simplifications, they present novel edge cases. 
In London, the vertex labels do not carry information relevant to changepoint localization, while in Atlanta, changepoint recovery cannot proceed without the vertex correspondence. This distinction can be characterized in terms of the marginal and joint distributions across time, for each network time series and its underlying LPP. We prove that vertex relabeling can weaken correlation between latent position processes at different times; thus, for cases in which changepoint localization depends on accurately gauging correlation, a loss of vertex correspondence can degrade downstream inference. In the London model, changepoint localization can proceed via accurate estimation of marginal properties of the underlying latent position process, and there is no information loss from vertex mislabelling. In the Atlanta model, changepoint localization depends on understanding joint distributions, and the correlation-weakening effect of vertex mislabelling ranges from navigable (when a small number of vertices are mislabelled) to insurmountable (once a nontrivial fraction of vertices are incorrectly matched).
The importance of vertex correspondence in real data lies, of course, between these two extremes. Understanding the nuances of models lends insight into representations for real data and changepoint detection methodology in the presence of vertex mislabeling.

{\bf Outline of paper}. We structure the paper as follows. In Section \ref{sec:Background_and_notation}, we encapsulate notation, definitions, and background on latent position process time series of graphs (LPPTSG) and Euclidean mirrors. In Section \ref{sec:Main_results_GM_OT}, we prove our main theorems on the impact of vertex misalignments for the latent position process and the relationship between graph matching and optimal transport in this context.  We define our two principal latent position process models, the London and Atlanta models, and then establish results on changepoint localization for the associated LPPTSGs. In Section \ref{sec:Simulations}, we demonstrate simulation evidence of the impact of vertex misalignment in both London and Atlanta models, as well as an exploratory data analysis of simulated data on swarms. In Section \ref{sec:Conclusion}, we provide a summary and discussion of open problems and ongoing work.

\section{Background and notation}\label{sec:Background_and_notation}

\subsection{Notation}

For $n \in \NN$, the natural numbers, let $[n] : = \{1,2,\cdots,n\}$. Let $e_i$ denote the $i$th standard basis vector in $\mathbb{R}^d$, and let $v=\left(v_1, v_2, \cdots, v_d\right)^\top$ denote the column vector $v=\sum_{i=1}^d v_i e_i$.  The vector Euclidean norm is denoted by $\|\cdot\|$. For a matrix $A \in \R^{k \times l}$, denote its $i, j$th entry by $A_{ij}$; its $i$th row by $A_{i.}$; and its $j$th column by $A_{.j}$. Where there is no ambiguity, we will use $A_i$ or $A_j$ to denote the $i$th row or $j$th column of $A$. 
The spectral norm of $A$ is denoted by $\|A\|_2$, the Frobenius norm by $\|A\|_F$, the maximum Euclidean row norm by $\|A\|_{2 \to \infty}$, and the maximum absolute row sum by $\|A\|_{\infty}$. For a square matrix $A \in \R^{n \times n}$, $\lambda_1(A), \cdots, \lambda_n(A)$ denote the eigenvalues in decreasing order. We denote the set of matrices $W\in \RR^{n\times d}$ with orthonormal columns by $\mathcal{O}^{n\times d}$, and the set of permutation matrices $P\in \RR^{n\times n}$ by $\mathcal{P}_n$.
In our asymptotic analysis, we consider the asymptotic order of various quantities, so we introduce the order notation here. The relevant terminology and definitions below are reproduced directly from \cite{chen2024euclidean}.
\begin{definition}[Order Notation]
Let $\omega(m)$, $\alpha(m)$ be two quantities depending on $m$. We say that $\omega$ is of order $\alpha(m)$ and use the notation $\omega(m) \sim \Theta(\alpha(m))$ to mean that there exist positive constants such that for $m$ sufficiently large, $c\alpha(m)\leq \omega(m) \leq C \alpha(m)$. We write $\omega(m) \sim O(\alpha(m))$ if there exists a constant $C$ such that for sufficient large $m$ $\omega(m) \leq C \alpha(m)$.
\end{definition}



\begin{definition}[Random dot product graph]
We say that the undirected random graph $G$ with adjacency matrix $\bA\in\RR^{n\times n}$ is a \emph{random dot product graph (RDPG)} with latent position matrix $\bX\in\RR^{n\times d}$, whose rows are the vectors $X^1,\ldots,X^n\in\mathcal{X}\subseteq\RR^d$, if
$$\PP[\bA|\bX]=\prod_{i<j}\langle X^i,X^j \rangle^{A_{i,j}}(1- \langle X^i,X^j \rangle)^{1-A_{i,j}},$$
where $\langle x,y\rangle= x^\top y$ is the inner product between two column vectors. We may denote this as $\bA\sim\mathrm{RDPG}(\bX)$. We call $\bP=\bX \bX^T$ the connection probability matrix. 
If instead $\bP = \bX I_{p,q}\bX^\top$, where $I_{p,q}=I_p\oplus(-I_q)$ for some $p+q=d$, we call $\bA$ a \emph{generalized random dot product graph (GRDPG).}
\end{definition}

\begin{remark}[Orthogonal nonidentifiability in RDPGs] \label{rem:nonid}
Note that if $\bX \in \R^{n \times d}$ is a latent position matrix
and $\bW \in \R^{d \times d}$ is orthogonal,
$\bX$ and $\bX\bW$ give rise to the same distribution over graphs.
Thus, the RDPG model has a nonidentifiability up to orthogonal transformation. In the case of a GRDPG, $\bX$ and $\bX\bW$ give rise to the same distribution whenever $\bW I_{p,q} \bW^\top=I_{p,q},$ meaning that $\bW$ is an indefinite orthogonal transformation.
\end{remark}

\begin{definition}[Inner product distribution]
\label{def:innerprod}
Let $F$ be a probability distribution on $\R^d$. we call $F$ a \emph{$d$-dimensional inner product distribution}
if $0 \leq x^{\top} y \leq 1$ for all $x,y \in \supp F$. We call $F$ a \emph{$(p,q)$-dimensional generalized inner product distribution} if $0\leq x^\top I_{p,q} y \leq 1$ for all $x,y\in\supp F$, where $I_{p,q}=I_p\oplus (-I_q)$. 
\end{definition}

\begin{definition}[Latent position process]\label{def:latent_pos_proc}
A {\em latent position process} $\varphi(t)$ is a map $\varphi:[0,T]\rightarrow L^2(\Omega, \mathbb{P})$ such that for each $t\in [0,T]$, $\varphi(t)=X_t$, a random vector in $\R^d$ which has a (generalized) inner product distribution.
\end{definition}


\begin{definition}[Latent position process network time series]
\label{def:tsg}
Let $\varphi$ be a latent position process, and fix a given number of vertices $n$ and collection of times $\mathcal{T}\subseteq[0,T]$. We draw an i.i.d.\ sample $\omega_j\in \Omega$ for $1\leq j\leq n$, and obtain the latent position matrices $\bX_{t}\in\RR^{n\times d}$ for $t\in\mathcal{T}$ by appending the rows $X_t(\omega_j)$, $1\leq j\leq n$. The \emph{time series of graphs (TSG)} $\{G_t: t\in\mathcal{T}\}$ are conditionally independent RDPGs with latent position matrices $\bX_{t}, t\in\mathcal{T}$.
\end{definition}

Existing literature on changepoint detection for networks has primarily focused on the marginal distributions of the networks. On the other hand, for a TSG where edges are assumed to arise conditionally independently (at each time and between times), and with a fixed small rank $d$ (or $p,q$ with $p+q=d$) for the mean matrices at each time, there is implicitly a (generalized) latent position process governing network evolution. As such, it is natural to define changepoints through properties of the LPP, rather than the distributions of the graphs themselves. This leads us to the notion of changepoints of different orders, below.

\begin{definition}
\label{def:highchangepoints}
A latent position process $\varphi(t)$ is said to have a \emph{zeroth-order changepoint} if there is some $t^*\in[0,T]$ such that
$$ X_t - X_{t'} \overset{\mathcal{L}}{=} \begin{cases}\delta_0 &\text{if }t,t'\leq t^*\text{ or }t^*\leq t,t'\\ F \neq \delta_0&\text{otherwise.}\end{cases}$$
Here $\delta_0$ is point mass at $0\in\RR^d$, and $F$ is some other distribution on $\RR^d$.

Given a partition of $[0,T]$ of mesh $\delta$, define the $k$th order difference of $\varphi$ at mesh $\delta>0$ recursively as follows: 
\begin{align*}
\Delta_1^\delta(t) &= X_t - X_{t-\delta},\\ 
\Delta_k^\delta(t) &= \Delta_{k-1}^\delta(t)-\Delta_{k-1}^\delta(t-\delta).
\end{align*}

A latent position process $\varphi(t)$ is said to have a \emph{$k$th-order changepoint of mesh }$\delta$ if there is some $t^*\in[0,T]$ such that for some distributions $F_1$, $F_2$, $F_3(t)$, $t^{*}<t\leq t^{*}+\delta$, not all equal, we have 
$$ \Delta_{k}^\delta(t) \overset{\mathcal{L}}{=} \begin{cases} F_1 &\text{if }t\leq t^*,\\
F_2&\text{if }t^*+\delta<  t,\\ F_3(t)&\text{if }t^*< t\leq t^*+\delta.\end{cases}$$ 
\end{definition}

Note that a first-order changepoint exactly corresponds to the setting in which 
$$\{\varphi(t): t\in [0,t^*)\}, \{\varphi(t): t\in(t^*+\delta,T]\}$$ are stationary stochastic processes (at least with respect to increments of length $\delta$), but with a loss of stationarity at the point $t^*$.

We emphasize in \ref{def:tsg} that each vertex in the TSG corresponds to a single $\omega\in\Omega$, which induces dependence between the latent positions for that vertex across times, but the latent position trajectories of any two distinct vertices are independent of one another across all times. Since these trajectories form an i.i.d.\ sample from the latent position process, it is natural to measure their evolution over time using the metric on the corresponding random variables, as described in \cite{athreya2025euclidean}. 
\begin{definition}
The \emph{maximum directional variation metric} $d_{MV}:L^2(\RR^d)^2\rightarrow [0,+\infty)$ is given by
\begin{equation}
d_{MV}(X_t,X_{t'}):=\min_{W \in \mathcal{O}^{d \times d}} \left\|\EE[(X_t-WX_{t'})(X_t-WX_{t'})^\top]\right\|_2^{1/2},
\end{equation}
where $\mathcal{O}^{d\times d}$ is the set of all $d \times d$ orthogonal matrices and $\| \cdot \|_2$ is the spectral norm.
\end{definition}

The $d_{MV}$ measure depends on more than just the marginal distributions of the random vectors $X_t$ and $X_{t'}$ individually; it also takes into account the dependence inherited from the latent position process $\varphi$. Because it depends on an expectation, it is a functional of the distribution of the latent position process. To estimate it from observed values of the latent position, we consider $\hat{d}_{MV}$, defined as follows.

\begin{definition}\label{hat-dmv}
Fix $t, s \in [0, T]$. Let $(X^{i}_t, X^{i}_s)$ for $1 \leq i \leq n$, be an observed pair of latent positions drawn independently and identically from a $d$-dimensional joint latent position distribution $F_{t,s}$. Let $\bX_t, \bX_s$ be the matrices of latent positions with rows $X^{i}_t$ and $X^i_s$, respectively. Define $\hat{d}_{MV}^2$ by
$$\hat{d}_{MV}^2(\bX_t,\bX_s)=\min_W \frac{1}{n} \left\|\sum_{i=1}^n (X^{i}_t-WX^{j}_s)(X^i_t-WX^j_s)^\top\right\|_2= \min_W\frac{1}{n}\left\|(\bX_t-\bX_s W)^\top(\bX_t-\bX_s W)\right\|_2,$$
where the minimization is over real orthogonal matrices $W\in\mathcal{O}^{d\times d}$.
\end{definition}

Next we consider the geometric properties of the image $\varphi([0,T])$ when equipped with the metric $d_{MV}$. Often, the map $\varphi$ admits a Euclidean analogue, called a {\em mirror}, which is a finite-dimensional curve that retains important signal from the generating LPP for the network time series. 

\begin{definition}[Exact Euclidean realizability with mirror $\psi$]
Let $\varphi$ be a latent position process. We say that the LPP $\varphi$ is \emph{exactly Euclidean $c$-realizable} with \emph{mirror} $\psi$ if there exists a Lipschitz continuous curve $\psi:[0,T]\rightarrow\RR^c$ such that:
$$
d_{MV}\left(\varphi(t), \varphi(t')\right):=d_{MV}\left(X_t, X_{t'}\right)=\|\psi(t)- \psi(t')\| \text{ for all } t,t' \in [0,T]. 
$$
\end{definition}

\begin{remark}
Exact Euclidean realizability implies the pairwise $d_{MV}$ distances between the latent position process at $t$ and $t'$ coincide exactly with Euclidean distances along the curve $\psi$ at $t$ and $t'$. This curve reflects, in Euclidean space, the $d_{MV}$ distances, and hence we call $\psi$ the {\em Euclidean mirror}.
\end{remark}

We similarly define approximate Euclidean realizability.
\begin{definition}[Approximate Euclidean realizability with mirror $\psi$]
Let $\varphi$ be a latent position process. For a fixed $\alpha \in (0, 1)$, we say that $\varphi$ is \emph{approximately \(\alpha\)-H\"older Euclidean \(c\)-realizable} if $\psi$ is \(\alpha\)-H\"older continuous, and there is some $C > 0$ such that
\[
\biggl| d_{MV}(\varphi(t), \varphi(t')) - \|\psi(t) - \psi(t')\| \biggr| \leq C|t - t'|^\alpha \quad \text{for all } t, t' \in [0, T].
\]
\end{definition}

If the LPP is exactly $c$-Euclidean realizable with mirror $\psi$, and we sample $m$ time points
$$\mathcal{T}=\{t_1,t_2,\cdots,t_m\} \subseteq [0,T],$$ then the corresponding distance (or dissimilarity) matrix $$\left(\mathcal{D}_\varphi\right)_{i,j}:=d_{MV}(X_{t_i},X_{t_j})$$ is an \emph{exactly $c$-Euclidean realizable distance (dissimilarity) matrix}: that is, there exist $m$ points $$\psi(t_1),\psi(t_2),\cdots,\psi(t_m) \in \mathbb{R}^c$$ such that $$\left(\mathcal{D}_\varphi\right)_{i,j}=\|\psi(t_i)-\psi(t_j)\| \text{~for all~} i,j \in \{1,2,\cdots,m\}.$$ 

In this case, applying classical multidimensional scaling (CMDS) to $\mathcal{D}_{\varphi}$ will recover the mirror $\psi(t_1),\psi(t_2),\cdots,\psi(t_m)$ exactly up to an orthogonal transformation. Recall that the CMDS embedding is defined as follows.

\begin{definition}[CMDS embedding to dimension $c$]
\label{def:CMDS}
Let $\mathcal{D} \in \mathbb{R}^{m \times m}$ be a distance matrix and define the centering matrix $P$ by $P:=I_m-\frac{J_m}{m}$ where $I_m$ is the $m \times m$ identity matrix and $J_m$ is the $m \times m$ all ones matrix. Consider $B:=-\frac{1}{2}P\mathcal{D}^{(2)}P$ where $\mathcal{D} ^{(2)}$ is the entrywise square of the distance matrix $\mathcal{D} $. Denote the $c$ largest eigenvalues and corresponding orthogonal eigenvectors of $B$ as $\lambda_1,\cdots,\lambda_c$ and $u_1,\cdots,u_c$. Set $\max(\lambda_i,0)=\sigma^2_i$ and $S$ a diagonal matrix with $S_{ii}:=\sigma^2_i$ for $i \in \{1,\dots,c\}$; let $U \in \R^{m \times c} := [ u_1 | \cdots | u_c ]$. The classical multidimensional scaling of $\mathcal{D}$ into dimension $c$, where $1\leq c\leq m-1$, is the set of $m$ points $\psi(t_1),...,\psi(t_m) \in \mathbb{R}^c$ defined by the rows of $\Psi$ as 
$$
\Psi:=[\psi_{j}(t_i)]_{i=1,j=1}^{m,c}
=\left[
\begin{array}{ccc}
\psi_1(t_1) & \cdots & \psi_c(t_1) \\ 
\vdots &  & \vdots \\ 
\psi_1(t_m) & \cdots & \psi_c(t_m)
\end{array}\right]=
 \left[
 \begin{array}{c}
  \psi^{\top}(t_1) \\ 
 \vdots \\ 
 \psi^{\top}(t_m) 
 \end{array}
 \right]=
US^{\frac{1}{2}}=
\left[ \sigma_1 u_1|\cdots| \sigma_c u_c\right].
$$
\end{definition}

\begin{remark}
If there are no positive eigenvalues for \( B \), then \( S=0 \) and thus \( \Psi=0 \). This implies that the distance matrix has no meaningful Euclidean representation. We note that \( B \) must be positive semidefinite for \( \mathcal{D} \) to be exactly Euclidean realizable.
\end{remark}
Embedding a distance or dissimilarity matrix through CMDS provides a constellation of points in Euclidean space whose interpoint Euclidean distances approximate those in the dissimilarity matrix. In the case of a latent position process, this embedding can provide key signal about the underlying process itself. We call this the {\em zero-skeleton} mirror. 
\begin{definition}[Zero-skeleton mirror]\label{def:0-skeleton-dMV-mirror} Suppose $\mathcal{D}_{\varphi}$ is the $d_{MV}$ distance matrix associated with an  LPP $\varphi$. The output of CMDS applied to $\mathcal{D}_{\varphi}$ produces $m$ points in $\R^c$, denoted $\{\psi(t_1),...,\psi(t_m)\}$, called the \emph{zero-skeleton $c$-dimensional ($d_{MV}$) mirror}. 
\end{definition}


Our network time series consists of time-indexed graphs on a common vertex set, and for any given time, each vertex has a corresponding time-dependent latent position. The latent positions are typically unknown, but can be consistently estimated through spectral decompositions of the observed adjacency matrices \cite{STFP-2011}.  Suppose that $G$ is a random dot product graph with latent position matrix $\bX \in \R^{n \times d}$, where the rows of $\bX$ are independent, identically distributed draws from a latent position distribution $F$ on $\mathbb{R}^d$. Let $\bP=\bX \bX^T$ be the connection probability matrix and $\bA$ be the adjacency matrix for this graph. The {\em adjacency spectral embedding} (ASE) is a rank $d$ eigendecomposition of the adjacency matrix.

\begin{definition}[Adjacency Spectral Embedding]
Given an adjacency matrix $\bA$, we define the \emph{adjacency spectral embedding} (ASE) with dimension $d$ as $\hat{\bX}=\hat{\bU}|\hat{\bS}|^{1/2}$, where $\hat{\bS}\in\R^{d\times d}$ is the diagonal matrix with the $d$ largest-magnitude eigenvalues of $\bA$ on its diagonal, arranged in decreasing order. $\hat{\bU}\in\R^{n\times d}$ is the matrix of $d$ corresponding orthonormal eigenvectors arranged in the same order. $\hat{\bX}$ is the estimated latent position matrix.  
\end{definition}

When the true dimension $d$ of the latent positions---or equivalently, the rank of $\bP$---is unknown, we can infer it using the scree plot \cite{zhu2006automatic}. The ASEs of the observed adjacency matrices in our TSG  at times $t$ and $s$, denoted $\hat{\bX}_t$ and $\hat{\bX}_s$, are estimates of the latent position matrices $\bX_t$ and $\bX_s$, from which we obtain the estimate $\hat{d}_{MV}$ of the $d_{MV}$ distance between the latent position random variables over time by applying Definition~\ref{hat-dmv} to these estimated matrices of latent positions.

\section{Vertex misalignment, graph matching, and optimal transport}\label{sec:Main_results_GM_OT}
In this section, we consider consequences of vertex misalignment on latent position process networks, specifically for correlation and dissimilarity measures, and we explore the relationship between graph matching and optimal transport, which are two approaches for mitigating vertex misalignment.

\subsection{Vertex shuffling, dependence, and dissimilarity}\label{sec:D^3}

The $d_{MV}$ distance captures dependence between latent positions across time by computing an expectation with respect to their joint distribution. To estimate this from data therefore requires the vertex correspondence to be known, and when it is not, the measured dependence between the random variables is weakened, leading to the following notion of distance:

\begin{definition}[independent-$d_{MV}$ Distance]
\label{def:independent_d_MV}
Let $X,Y\in L^2(\Omega)$ be latent position random variables. The independent-$d_{MV}$ distance is defined as
$$\inddmv(X,Y) = \min_{W\in\mathcal{O}^{d\times d}} \left\|\EE_{(X',Y')\sim \mu_X \otimes \mu_Y}[(X'-WY')(X'-WY')^\top]\right\|,$$
where $\mu_{X}\otimes \mu_{Y}$ is the product of the marginal measures of $X$ and $Y$.
\end{definition}
The impact of vertex shuffling on a network is represented as similarity transformation by a uniformly-distributed permutation matrix, by which the matrix $A_t$  is mapped to  $PA_t P^\top$. Since the eigendecomposition is permutation-equivariant, a shuffling of the vertices between times $t$ and $s$ on the $\hat{d}_{MV}$ distance transforms 
$\hat{d}_{MV}(\hat{\bX}_t,\hat{\bX}_s)$ into $\hat{d}_{MV}(\hat{\bX}_t,P\hat{\bX}_s).
$

The following result concerns the case of positive scalar LPPs, for which the calculation of $d_{MV}$ is especially simple since the minimizing rotation is always given by $W=1$ and the spectral norm becomes equivalent to the Frobenius norm. 

\begin{thm}\label{thm:independentdMV_convergence}
Consider a positive scalar LPP $\{X_t ; t\in [m]\}$. 
Let $\{\bX_{t}\}_{t\in[m]}$ be latent position matrices with $n$ rows drawn i.i.d from this LPP.
Let $\sigma$ be a random permutation uniformly distributed on $S_n$, and $P_n$ its corresponding permutation matrix. Then for any $\epsilon > 0$,  
$$ \PP \left( \left| \hat{d}^2_{MV}(\mathbf{X}_{t}, P_{n} \mathbf{X}_{s})-  \inddmv[2](X_{t}, X_{s}) \right| \geq \epsilon \right) \leq  \frac{12}{n \epsilon^2} + \frac{4}{n \epsilon}.$$
\end{thm}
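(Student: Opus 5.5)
The plan is to write both quantities out explicitly in the scalar case, reduce the comparison to a few empirical averages, and control each average by Chebyshev's inequality, with a small Markov-type bias term coming from the fixed points of $\sigma$.

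\textbf{Step 1: reduce to explicit averages.} Since the inner product distribution is supported in $[0,1]$ for a positive scalar LPP, the orthogonal group is $\{\pm 1\}$. Because all entries are nonnegative, $W=1$ is always optimal. Writing $X^i=X^i_t$ and $Y^j=X^j_s$, this gives
$$\hat d^2_{MV}(\bX_t,P_n\bX_s)=\frac1n\sum_i (X^i)^2+\frac1n\sum_i (Y^{i})^2-\frac2n\sum_i X^iY^{\sigma(i)},$$
where the middle sum uses that $\sigma$ is a bijection. Likewise
$$\inddmv[2](X_t,X_s)=\EE X_t^2+\EE X_s^2-2\,\EE X_t\,\EE X_s.$$
It therefore suffices to show that each of the three empirical averages concentrates around its population counterpart, and then combine the three estimates with a union bound after splitting $\epsilon$.

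\textbf{Step 2: the two square terms.} The averages $\frac1n\sum_i (X^i)^2$ and $\frac1n\sum_i (Y^i)^2$ are i.i.d.\ averages of variables in $[0,1]$. Each therefore has variance at most $1/(4n)$, and Chebyshev's inequality with a fraction of $\epsilon$ gives contributions of order $1/(n\epsilon^2)$.

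\textbf{Step 3: the cross term.} This is the main step. Set $S=\frac1n\sum_i X^iY^{\sigma(i)}$. I will condition on $\sigma$.
\begin{itemize}
\item \emph{Mean.} For $i$ not a fixed point of $\sigma$, $X^i$ and $Y^{\sigma(i)}$ come from distinct independent vertices, so $\EE[X^iY^{\sigma(i)}]=\EE X_t\EE X_s$. A fixed point contributes $\EE[X_tX_s]$, which differs from $\EE X_t\EE X_s$ by at most $1$. The expected number of fixed points of a uniform permutation is $1$, so the bias of $S$ is at most $1/n$ in absolute value; equivalently, $|\EE[S\mid\sigma]-\EE X_t\EE X_s|\le F(\sigma)/n$ with $F(\sigma)$ the number of fixed points.
\item \emph{Variance.} Each pair of terms $X^iY^{\sigma(i)}$ and $X^kY^{\sigma(k)}$ with $i\neq k$ is conditionally correlated only if $\{i,\sigma(i)\}\cap\{k,\sigma(k)\}\neq\emptyset$. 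For a fixed permutation each $i$ has at most $2$ such partners $k$, so at most $O(n)$ pairs are correlated, each with covariance bounded by $1$. Hence $\Var(S\mid\sigma)\le Cn/n^2=C/n$ with a small absolute $C$.
\end{itemize}
Chebyshev's inequality conditional on $\sigma$ handles the fluctuation of $S$. For the bias, Markov's inequality on $F(\sigma)/n$ gives $\PP(2F(\sigma)/n\ge \epsilon/4)\le 8/(n\epsilon)$, or directly the claimed $4/(n\epsilon)$ after appropriate allocation of $\epsilon$. Integrating over $\sigma$ leaves a bound of the same form.

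\textbf{Step 4: combine.} A union bound over the square terms, the fluctuation of $S$ and the bias, with $\epsilon$ split among them, yields a bound of the form $c_1/(n\epsilon^2)+c_2/(n\epsilon)$. The main obstacle is bookkeeping rather than ideas: I expect to tune the split of $\epsilon$ and the covariance count to reach exactly $12$ and $4$. For example, I could group the square terms together with the fluctuation of $S$ into a single centered average and bound its variance by $3/n$ at threshold $\epsilon/2$, which gives $12/(n\epsilon^2)$, and assign the other $\epsilon/2$ to the fixed-point bias via Markov's inequality, which gives $4/(n\epsilon)$.
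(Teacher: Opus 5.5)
Your proposal is correct and follows essentially the paper's route. Your Step 4 accounting is the paper's argument with $f(x,y)=(x-y)^2$: a Chebyshev bound conditional on $\sigma$ using that each index has at most two correlated partners (variance $\le 3/n$, giving $12/(n\epsilon^2)$ at level $\epsilon/2$), plus Markov's inequality on the number of fixed points, whose mean is $1$ (giving $4/(n\epsilon)$). The only differences are bookkeeping: you absorb the fixed points into a conditional-mean bias of size at most $2F(\sigma)/n$, whereas the paper bounds their contribution deterministically on the event $\{N^{\sigma}_{\text{fix}}\le \epsilon n/4\}$ and applies its derangement proposition to the remaining indices; and the three-term expansion in Steps 1--3 becomes unnecessary once you regroup.
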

We can also consider the case where only a portion of the vertices are misaligned, or where only a portion of the vertex alignments are known, as in the following.
\begin{definition}[$\alpha$-shuffling]\label{def:alpha-shuffling}
Let $\alpha\in(0,1]$, and define $\alpha_n=\lfloor \alpha n\rfloor$. Let $\sigma_{\alpha_n}$ be a uniformly distributed random permutation on $\{n-\alpha_n+1,\ldots,n\}$ with corresponding permutation matrix $P_{\alpha_n}$. Define the {\em $\alpha$-shuffling} by $\sigma\in S_n$ satisfying $\sigma(i)=i$ for $i=1,\ldots, n-\alpha_n$ and $\sigma(j)=\sigma_{\alpha_n}(j)$ for $j=n-\alpha_n+1,\ldots, n$. The corresponding permutation matrix of such an $\alpha$-shuffling is given by 
$P_{n,\alpha}:=I_{n-\alpha_n}\oplus P_{\alpha_n}$. When $\alpha=1$, we call this a \emph{shuffling}.
\end{definition}

\begin{definition}[$\alpha$-shuffled TSG]
\label{def:alpha-shuffled-tsg}
Let $\bA_1,\ldots,\bA_T$ be the adjacency matrices of a TSG on a fixed, aligned vertex set with $n$ vertices. Given $\alpha\in(0,1]$, draw $T$ independent $\alpha$-shufflings as in Definition \ref{def:alpha-shuffling}, with permutation matrices $\{P^1_{n, \alpha},...,P^T_{n, \alpha}\}$.
We call the TSG with adjacency matrices given by 
$$P^1_{n, \alpha} \bA_1 (P^1_{n, \alpha})^\top ,\ldots,  P^T_{n, \alpha} \bA_T (P^T_{n, \alpha})^\top$$ an $\alpha$\emph{-shuffled TSG}. When $\alpha = 1$, we shorten this to a \emph{shuffled TSG}.
\end{definition}
For positive scalar latent positions, when we consider the $\hat{d}_{MV}$ distance applied to $\bX_t$ and an $\alpha$-shuffled $\bX_s$, $\hat{d}_{MV}$ satisfies
\begin{equation}\label{equ:decompose_partial_shuffling}
\hat{d}_{MV}(\bX_t,P_{n,\alpha}\bX_s)^2= \frac{1}{n}\| \bX_t - (I_{n - \alpha_n} \oplus P_{\alpha_n} )   \bX_{s}\|^2_F 
= \frac{n - \alpha_n}{n} \hat{d}^2_{MV}(\bX_t^{(1)},\bX^{(1)}_s) + \frac{\alpha_n}{n}\hat{d}^2_{MV}( \bX_t^{(2)} , P_{\alpha_n} \bX_s^{(2)} ),
\end{equation}
where $\bX_t = [(\bX_t^{(1)})^\top, (\bX_t^{(2)})^\top]^\top$. Namely, $\bX_t^{(1)}$ is the first $n-\alpha_n$ rows of $\bX_t$, which are aligned with $\bX_s^{(1)}$, while $\bX_t^{(2)}$ and $\bX_s^{(2)}$ contain the remaining $\alpha_n$ rows, which have been shuffled. 
\begin{corollary}\label{Cor:partial shuffling}
Let $\alpha\in(0,1]$ and consider a positive scalar LPP. 
For any $\epsilon > 0$, we have 
$$ \mathbb{P} \left( \left| \hat{d}^2_{MV}(\mathbf{X}_{t}, P_{n,\alpha} \mathbf{X}_{s})-  
\left( \left(1-\frac{\alpha_n}{n}\right) d^2_{MV}(X_{t},X_{s}) +  \frac{\alpha_n}{n}\, \inddmv[2](X_{t}, X_{s})\right)
\right| \geq \epsilon \right) \leq  \frac{1}{(n-\alpha_n) \epsilon^2} + \frac{12}{\alpha_n \epsilon^2} + \frac{4}{\alpha_n \epsilon}.$$
\end{corollary}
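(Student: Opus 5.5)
We will work from the decomposition \eqref{equ:decompose_partial_shuffling}. In the positive scalar setting the minimizing rotation is trivial, so both $\hat d_{MV}^2$ terms are plain averages of squared differences. The target quantity is the matching convex combination of $d^2_{MV}(X_t,X_s)=\EE[(X_t-X_s)^2]$ and $\inddmv[2](X_t,X_s)$. Subtracting the target from \eqref{equ:decompose_partial_shuffling} and applying the triangle inequality gives
\[
\Bigl|\hat d^2_{MV}(\bX_t,P_{n,\alpha}\bX_s)-\text{target}\Bigr|
\le \tfrac{n-\alpha_n}{n}\bigl|\hat d^2_{MV}(\bX_t^{(1)},\bX_s^{(1)})-d^2_{MV}(X_t,X_s)\bigr|
+\tfrac{\alpha_n}{n}\bigl|\hat d^2_{MV}(\bX_t^{(2)},P_{\alpha_n}\bX_s^{(2)})-\inddmv[2](X_t,X_s)\bigr|.
\]
If the left side is at least $\epsilon$, then the first summand or the second summand is at least $\epsilon/2$. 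Since the weights $\frac{n-\alpha_n}{n}$ and $\frac{\alpha_n}{n}$ are at most $1$, it suffices to control the event that each unweighted deviation is at least $\epsilon/2$, or better still at least $\epsilon$ once the weights are used. A union bound then sums the two probabilities.

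\emph{Aligned block.} The first $n-\alpha_n$ rows are aligned, so $\hat d^2_{MV}(\bX_t^{(1)},\bX_s^{(1)})=\frac{1}{n-\alpha_n}\sum_{i\le n-\alpha_n}(X_t^i-X_s^i)^2$. This is an average of i.i.d.\ terms with mean $d^2_{MV}(X_t,X_s)$. The latent positions lie in $[0,1]$ (inner product distribution), so each term lies in $[0,1]$ and has variance at most $1$. Chebyshev's inequality gives the bound $\frac{1}{(n-\alpha_n)\epsilon^2}$, which matches the first term of the claimed bound.

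\emph{Shuffled block.} The last $\alpha_n$ rows are i.i.d.\ draws from the LPP, and $P_{\alpha_n}$ is a uniform permutation of them independent of the data. This is exactly the setting of Theorem~\ref{thm:independentdMV_convergence} with $n$ replaced by $\alpha_n$. That theorem bounds the second deviation by $\frac{12}{\alpha_n\epsilon^2}+\frac{4}{\alpha_n\epsilon}$, the remaining two terms of the claimed bound.

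The main obstacle is the bookkeeping of the $\epsilon$ split, since the stated bound carries no extra constant factors. A naive split into $\epsilon/2$ thresholds would introduce factors of $4$ and $2$. We will try to absorb these using the weights. If the first weighted summand is at least $\lambda\epsilon$ and the second at least $(1-\lambda)\epsilon$, choose $\lambda=\frac{n-\alpha_n}{n}$. Then the first unweighted deviation must be at least $\epsilon$, and likewise the second, so the thresholds stay at $\epsilon$ with no constant loss. This works because the event $\{a w_1+b w_2\ge\epsilon\}$ with $w_1+w_2=1$ implies $a\ge\epsilon$ or $b\ge\epsilon$. This step is the main thing to verify carefully. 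We also need to check that the shuffled block's rows being the last $\alpha_n$, rather than arbitrary ones, does not matter, which holds by exchangeability. The edge case $\alpha_n=n$ reduces directly to Theorem~\ref{thm:independentdMV_convergence}.
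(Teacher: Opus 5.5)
Your proposal is correct and matches the paper's proof. The paper likewise starts from the decomposition \eqref{equ:decompose_partial_shuffling}, bounds the aligned block by Chebyshev (variance at most $1$), and bounds the shuffled block by Theorem~\ref{thm:independentdMV_convergence} with $\alpha_n$ in place of $n$. It then uses your observation that a convex combination of the two deviations reaches $\epsilon$ only if one of them does, so the bad event lies in the union of the two unweighted $\epsilon$-deviation events and the stated bound follows with no constant loss. (In your $\lambda$-argument the two alternatives should be joined by ``or'' rather than ``and'', but your final statement of the implication is the right one.)
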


Inspired by this result, we define $\alpha$-$d_{MV}$ to be a convex combination of $d_{MV}$ and ind-$d_{MV}$:
\begin{definition}[$\alpha$-$d_{MV}$]
$$
\alpha-d^2_{MV}(X_t,X_s) := (1-\alpha) ~ d^2_{MV}(X_t,X_s) + \alpha\cdot \inddmv[2](X_t,X_s). 
$$    
\end{definition}

Besides the extreme cases of perfect vertex alignment and random shuffling, it is not uncommon to have a set of seed vertices where the alignment is known or can be correctly estimated. 
In settings where the alignment is important, Corollary \ref{Cor:partial shuffling} motivates improving the alignment even when it cannot be perfectly estimated.

\subsection{Graph matching and optimal transport}\label{sec:Mirrors_Matching_Transport}
When vertex alignments between networks are unknown, it is natural to seek methods for comparing networks which mitigate the lack of correspondence. We now consider two such mitigating methods, optimal transport on the latent position distributions and graph matching for the adjacency matrices, which have significant parallels in this setting. 

Consider comparing the marginal distributions of the latent positions at two different times. One approach to a such a comparison is \emph{optimal transport}, whose objective in this setting to find an alignment that minimizes the Wasserstein-$p$ distance $W_p$, between these random variables, defined for a pair of measures $\mu$ and $\nu$ on a complete, separable metric space as follows:

\begin{definition}\label{def:Wasserstein_distance_measures}
Let $\mu, \nu$ be two measures with finite $p$-moments on $\RR^d$. Let $\gamma(\mu, \nu)$ be a \emph{coupling} between these measures, namely a measure on $X\times X$ with the product $\sigma$-algebra, with marginals $\mu$ and $\nu$. Let $\Gamma(\mu, \nu)$ be the set of all such couplings.  The Wasserstein $p$-distance between $\mu$ and $\nu$ is defined by
$$W_p(\mu, \nu)=\inf_{\gamma \in \Gamma(\mu, \nu)
} \left[\mathbb{E}_{(x,y) \sim \gamma} \|x-y\|_p^p\right]^{1/p}.$$ When we have random vectors $X,Y$ supported on $\RR^d$, we will also write $W_p(X,Y)$ to denote the Wasserstein $p$-distance between their induced measures, $W_p(\mu_X,\mu_Y)$.
\end{definition}

Because of the random dot product graph's inherent nonidentifability of latent positions with respect to orthogonal transformations, we consider minimizing the Wasserstein $p$ distance over orthogonally-transformed latent positions, which we denote by $\procwp[p]$. 

\begin{definition}\label{def:procwp-distance}
Consider two random vectors $X \in \RR^d$, $X \sim \mu$ and $Y\in \RR^d$, $Y \sim \nu$. Let $W \in \mathcal{O}^{d \times d}$ be given, and let $\nu_{W}$ denote the probability measure of $Z=WY$. We define 
\begin{align*}
\procwp[p] (X,Y) : = \min_{W \in \mathcal{O}^{d\times d} } W_p(X,WY) &= \min_{W\in\mathcal{O}^{d\times d}} \inf_{\gamma\in\Gamma(\mu,\nu_W)}[\EE_{(x,z)\sim\gamma}\|x-z
\|_p^p]^{1/p}\\
&= \min_{W\in\mathcal{O}^{d\times d}}\inf_{\gamma\in\Gamma(\mu,\nu)}[\EE_{(x,y)\sim\gamma}\|x-Wy\|_p^p]^{1/p}.
\end{align*}
\end{definition}

When we have two point clouds of vectors in $\RR^d$, $\{x_1,\ldots,x_n\}$ and $\{y_1,\ldots,y_n\}$, we can arrange these as the rows of the matrices $\bX,\bY\in\RR^{n\times d}$. The empirical measure for one of these point clouds is given by $$\mu_{\bX}=\frac{1}{n}\sum_{i=1}^n \delta_{x_i},$$ where $\delta_x$ is the point mass at $x$. Now the Wasserstein distance between these empirical measures gives us a distance between matrices via 
$\hat{W}_p(\bX,\bY):= W_p(\mu_\bX,\mu_\bY).$ It turns out that the minimization over couplings is always achieved by a permutation matrix in this setting, which prompts the following definition for pairs of matrices.

\begin{definition}\label{def:Wasserstein_empirical}
Let $\bX\in \RR^{n\times d}$ and $\bY\in\RR^{n\times d}$ have rows $x_1,\ldots,x_n$ and $y_1,\ldots,y_n$, respectively. Then 

\begin{equation}
    \label{eq:wassp}
    \hat{W}_p(\bX,\bY):=\min_{\sigma\in S_n} \left(\frac{1}{n}\sum_{i=1}^n \|x_i-y_{\sigma(i)}\|_p^p\right)^{1/p},
\end{equation}
\end{definition}

Again, because of orthogonal nonidentifiability for random dot product graphs, we adapt Definition \ref{def:procwp-distance} for the case of latent position matrices as follows.
\begin{definition}\label{def:procwp_matrix_distance}
Given two latent position matrices $\bX,\bY \in \RR^{n \times d}$, we define 
$$
\procwphat[p] (\bX,\bY) : = \min_{W \in \mathcal{O}^{d\times d} } W_p(\bX,\bY W) = \min_{W \in \mathcal{O}^{d\times d}}\min_{\sigma \in S_n} \left( \frac{1}{n} \sum^n_{i=1}\| (\bX)_i -  (\bY W)_{\sigma(i)} \|_p^p \right)^{\frac{1}{p}},
$$
so when $p = 2$, this is simply
$$
\procwphat (\bX,\bY) =  \frac{1}{\sqrt{n}} \min_{W \in \mathcal{O}^{d\times d}}\min_{\sigma \in S_n} \left(\sum^n_{i=1}\| (\bX)_i -  (\bY W)_{\sigma(i)} \|^2\right)^{\frac{1}{2}} = \frac{1}{\sqrt{n}} \min_{W \in \mathcal{O}^{d\times d}}\min_{P \in \mathcal{P}_n} \|\bX-P\bY W\|_F  .
$$
\end{definition}

 
 Since we generally do not observe the matrices of latent positions themselves, we can consider $\procwphat$ evaluated at the estimated latent position matrices $\Xhat$ and $\Yhat$. The minimization over permutations in $\procwphat(\Xhat, \Yhat)$ amounts to aligning the estimated latent positions directly. Alternatively, we can determine a vertex relabeling by aligning the observed adjacency matrices themselves. This is the {\em graph matching} problem \cite{conte04:_thirt, lyzinski15:_relax, sgm_jofc} in which, given two $n \times n$ adjacency matrices $A$ and $B$, the goal is to find a permutation matrix $P$ that minimizes disagreements between the two adjacency matrices:
\begin{equation}\label{eq:graph_matching_objective}
P_{GM}=\arg\min_{P\in \mathcal{P}_n} \|A-PBP^{\top}\|_F.
\end{equation}
Suppose we have adjacency matrices $\bA_s$ and $\bA_t$ from an LPPTSG at times $s$ and $t$, and suppose $P_{GM}$ is given by Eq. \eqref{eq:graph_matching_objective}. To understand pairwise dissimilarities for the latent position process while still accounting for and mitigating vertex misalignments, we can use graph matching between adjacency matrices to induce an alignment between the estimated latent positions, and then compute the resulting $d_{MV}$ dissimilarity. For a positive scalar latent position process, this reduces to  
$$
\frac{1}{\sqrt{n}} \| \hat{\bX}_t - P_{GM}\hat{\bX}_s\|_2 .
$$
If we minimize this quantity more directly and use the Frobenius norm, via 
$$
\min_{P \in S_n}\frac{1}{\sqrt{n}}\| \hat{\bX}_t - P\hat{\bX}_s\|_F,$$
then we recover
$$\min_{\sigma \in S_n} \left(\frac{1}{n} \sum^n_{i=1} \| (\hat{\bX}_t)_i - (\hat{\bX}_s)_{\sigma(i)}\|^2\right)^{\frac{1}{2}} = W_2(\mu_{\hat{\bX}_t}, \mu_{\hat{\bX}_s} ).
$$
As such, Wasserstein-type metrics can be viewed as a more direct comparison between latent position distributions compared to applying the $d_{MV}$ distance to the graph-matched adjacency matrices. The following two theorems show that Wasserstein-type distances between latent position distributions can be well-approximated by Wasserstein-type distances on the estimated latent positions.

\begin{thm}\label{thm:W1_continuous}
Let $\bX,\bY\in \RR^{n\times d},$ and suppose that $\hat{\bX}, \hat{\bY}\in \RR^{n\times d}$ satisfy $$ \min_{W\in \mathcal{O}^d}\|\hat{\bX}-\bX W\|_{2\rightarrow\infty}\leq \epsilon,\quad \min_{W\in\mathcal{O}^d}\|\hat{\bY}-\bY W\|_{2\rightarrow\infty}\leq \epsilon.$$ Then 
$$ |\procwphat[1](\hat{\bX},\hat{\bY})-\procwphat[1](\bX,\bY)|\leq 2\epsilon. $$
\end{thm}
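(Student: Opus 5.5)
The plan is a direct triangle-inequality argument, exploiting that the minimization over $\sigma\in S_n$ and $W\in\mathcal{O}^{d\times d}$ in $\procwphat[1]$ can absorb the orthogonal alignments coming from the hypotheses. Throughout, I read $\|\cdot\|_p$ in Definition~\ref{def:procwp_matrix_distance} with $p=1$ as the Euclidean norm on $\RR^d$. This is the natural reading of $W_1$ as a transport distance for the Euclidean metric, and it matches the $\|\cdot\|_{2\to\infty}$ hypothesis. The key fact used is that the Euclidean row norm is invariant under right multiplication by orthogonal matrices.

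\textbf{Setup.} Fix minimizers $W_X, W_Y\in\mathcal{O}^{d\times d}$ with $\|\hat{\bX}-\bX W_X\|_{2\to\infty}\le\epsilon$ and $\|\hat{\bY}-\bY W_Y\|_{2\to\infty}\le\epsilon$. These exist by compactness of $\mathcal{O}^{d\times d}$. Let $(W',\sigma)$ attain $\procwphat[1](\bX,\bY)$, meaning $\frac1n\sum_i\|\bX_i-(\bY W')_{\sigma(i)}\|$ is minimal. I define the candidate rotation $W:=W_Y^\top W' W_X$ and use the same $\sigma$ as a feasible pair for $\procwphat[1](\hat{\bX},\hat{\bY})$.

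\textbf{Term-by-term bound.} For each $i$, I split
\[
\hat{\bX}_i-(\hat{\bY}W)_{\sigma(i)} = \bigl(\hat{\bX}_i-(\bX W_X)_i\bigr)+\bigl((\bX W_X)_i-(\bY W_Y W)_{\sigma(i)}\bigr)+\bigl((\bY W_Y-\hat{\bY})W\bigr)_{\sigma(i)}.
\]
The three pieces are bounded as follows.
\begin{enumerate}
\item The first piece has norm at most $\epsilon$ by the hypothesis on $\hat{\bX}$.
\item The third piece has norm $\|(\bY W_Y-\hat{\bY})_{\sigma(i)}\|\le\epsilon$, since multiplying a row by the orthogonal $W$ preserves its norm.
\item For the middle piece, note that $W_Y W=W' W_X$. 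Hence it equals $\bigl(\bX_i-(\bY W')_{\sigma(i)}\bigr)W_X$, whose norm is $\|\bX_i-(\bY W')_{\sigma(i)}\|$.
\end{enumerate}

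\textbf{Conclusion.} Averaging over $i$ and applying the triangle inequality gives
\[
\procwphat[1](\hat{\bX},\hat{\bY})\le \frac1n\sum_i\|\hat{\bX}_i-(\hat{\bY}W)_{\sigma(i)}\|\le \procwphat[1](\bX,\bY)+2\epsilon .
\]
The hypotheses are symmetric in the roles of $(\bX,\bY)$ and $(\hat{\bX},\hat{\bY})$, since $\|\bX-\hat{\bX}W_X^\top\|_{2\to\infty}=\|\hat{\bX}-\bX W_X\|_{2\to\infty}$. Running the same argument with the roles swapped therefore gives the reverse inequality, and the two together prove the claim.

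\textbf{Main obstacle.} The only delicate point is the bookkeeping of the rotations, namely choosing $W=W_Y^\top W'W_X$ so that the middle term becomes exactly a rotated copy of the optimal residual for $(\bX,\bY)$. This step needs the row norm to be orthogonally invariant. If $\|\cdot\|_1$ were instead read literally as the coordinatewise $\ell^1$ norm, this invariance fails. In that case one would pass through $\|v\|_1\le\sqrt d\|v\|$ and obtain only a bound of order $\sqrt d\,\epsilon$ (with further care for the middle term). For that reason I adopt the Euclidean reading stated above.
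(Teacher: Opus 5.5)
Your proof is correct and follows essentially the paper's route: a three-term triangle inequality in which the orthogonal alignments from the $2\to\infty$ hypotheses are absorbed into the rotation paired with the optimal $(W',\sigma)$ for $(\bX,\bY)$, relying on orthogonal invariance of the Euclidean row norm, which is indeed the norm the paper uses. The only difference is in the reverse inequality: the paper bounds the objective for an arbitrary pair $(\sigma,W)$ from below by the $(\bX,\bY)$ objective minus $2\epsilon$, whereas you get it from the symmetry of the hypotheses, which is equally valid; your bookkeeping $W=W_Y^\top W'W_X$ is, if anything, cleaner than the paper's.
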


\begin{remark}
    Note that Theorem~\ref{thm:W1_continuous} is a deterministic statement, and we make no assumption about the joint distribution of the two samples comprising the rows of $\bX$ and $\bY$ or the relationship between these matrices and their estimates $\hat{\bX},\hat{\bY}$.
\end{remark}

\begin{thm}\label{thm:W1_as_converge}
Let $\mu_X,\mu_Y$ be distributions on $B_d^+$, the subset of the unit ball in $\RR^d$ with all nonnegative entries. Let $\bX,\bY\in\RR^{n\times d}$ be matrices whose rows come from iid samples from $\mu_X,\mu_Y$, respectively. Suppose that $\EE_{\mu_X}[xx^\top], \EE_{\mu_Y}[yy^\top]$ are positive definite. There is a constant $C_d$ such that for sufficiently large $n$,
$$\PP[|\procwphat[1](\bX,\bY)-\procwp[1](\mu,\nu)|>C_d n^{-1/(d+2)}] \leq \exp(-n^{-d/(d+2)}).$$
Furthermore, let $A_X\sim\mathrm{RDPG}(\bX), A_Y\sim\mathrm{RDPG}(\bY)$. Let $\hat{\bX}, \hat{\bY}$ be the ASEs for these $A_X,A_Y$, respectively. Then for any $a>0$, there is a constant $C_{a,d}>0$ such that for sufficiently large $n$, with probability at least $1-2n^{-a}$,
$$|\procwphat[1](\hat{\bX},\hat{\bY})-\procwp[1](\mu,\nu)| \leq C_{a,d} n^{-1/(d+2)}.$$
\end{thm}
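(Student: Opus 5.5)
The statement has two parts: a concentration bound for the Procrustes--Wasserstein-1 distance between the true latent position matrices, and a corresponding bound for the ASEs. The plan is to prove the first by reducing to empirical-measure convergence for $W_1$ uniformly over orthogonal $W$. The second part then follows from Theorem~\ref{thm:W1_continuous} together with a $2\to\infty$ bound on ASE error.

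\textbf{Step 1: uniform reduction.} For each fixed $W\in\mathcal{O}^{d\times d}$ the triangle inequality for $W_1$ gives
$$|W_1(\mu_{\bX},\mu_{\bY W})-W_1(\mu,\nu_W)|\le W_1(\mu_{\bX},\mu)+W_1(\mu_{\bY W},\nu_W).$$
Here $\mu_{\bY W}$ is the empirical measure of the points $W^\top y_i$, or of $W y_i$ with the paper's convention; either way it is the pushforward of $\mu_{\bY}$ under an isometry. Since $W_1$ is invariant under applying the same isometry to both arguments, $W_1(\mu_{\bY W},\nu_W)=W_1(\mu_{\bY},\nu)$ for every $W$. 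Taking the minimum over $W$ on both sides, which is $1$-Lipschitz in the sup sense, yields
$$|\procwphat[1](\bX,\bY)-\procwp[1](\mu,\nu)|\le W_1(\mu_{\bX},\mu)+W_1(\mu_{\bY},\nu).$$
This is the key simplification: no union bound over $\mathcal{O}^{d\times d}$ is needed. One small point to check is that the norm $\|\cdot\|_1$ in $W_1$ is not rotation invariant. I would interpret the cost as the Euclidean norm; otherwise the equivalence $\|\cdot\|_1\le\sqrt d\|\cdot\|$ costs only a $d$-dependent constant and some care.

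\textbf{Step 2: empirical convergence and concentration.} For measures supported in the unit ball of $\RR^d$, the standard rate (Dudley; Fournier--Guillin) is $\EE W_1(\mu_{\bX},\mu)\le C_d n^{-1/d}$ for $d\ge3$, with logarithmic modifications for small $d$. All of these are dominated by $C_d n^{-1/(d+2)}$ for large $n$, which is presumably why the looser exponent $1/(d+2)$ was chosen. Next, $W_1(\mu_{\bX},\mu)$ is a function of the $n$ i.i.d.\ rows with bounded differences $2/n$, since changing one point moves mass $1/n$ over distance at most $2$. McDiarmid's inequality then gives deviation probability $\exp(-c n s^2)$ above the mean. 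Choosing $s$ of order $n^{-1/(d+2)}$ gives $\exp(-c n^{d/(d+2)})$, which is smaller than the stated bound. I read the exponent in the statement as $n^{d/(d+2)}$; as printed, the bound is trivially implied for large $n$. A union over the two samples costs a factor of $2$, which is absorbed. The positive-definiteness assumption is not needed here.

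\textbf{Step 3: ASE part.} By Theorem~\ref{thm:W1_continuous}, it suffices to show that, with probability at least $1-n^{-a}$ each,
$$\min_W\|\hat{\bX}-\bX W\|_{2\to\infty}\le C_a\log n/\sqrt n,$$
and similarly for $\hat{\bY}$. This is the known $2\to\infty$ ASE bound (Lyzinski et al.; Cape--Tang--Priebe). Its hypotheses are that $\bP=\bX\bX^\top$ has $d$ eigenvalues of order $n$, and this is where positive definiteness of $\EE[xx^\top]$ enters: the matrix Bernstein or Hoeffding inequality gives $\lambda_d(\bX^\top\bX)\ge cn$ with overwhelming probability. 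The support in $B_d^+$ ensures valid probabilities and a nonnegative, dense regime. Then $2\epsilon=O(\log n/\sqrt n)=o(n^{-1/(d+2)})$, since $d\ge1$, and combining this with the first part via the triangle inequality gives the result with failure probability at most $2n^{-a}$ plus an exponentially small term.

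\textbf{Main obstacle.} I expect the main obstacle to be Step 3's verification of the eigengap conditions for the random $\bX$ and the bookkeeping of probabilities. Step 1 must also be handled carefully regarding the norm used in $W_1$ and the direction of $W$ acting on rows.
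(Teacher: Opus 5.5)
Your proposal is correct, but for the sampling part it takes a different route from the paper.

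\textbf{The paper's route.} It passes to the Kantorovich dual and writes both quantities as $\min_W\sup_{f\in\mathcal{F}_1}$ of $\EE f(x)-\EE f(Wy)$, under the empirical and population measures respectively. It applies Hoeffding for each fixed $(f,W)$ and then takes a union bound over a $t$-net of $\mathcal{F}_1\times\mathcal{O}^{d}$. The net has entropy $C(4\sqrt d/t)^d$ for the Lipschitz class plus $d^2\log(1+4\sqrt d/t)$ for the orthogonal group. Uniform closeness is then transferred through the $\min\sup$ via the ``sups are close'' lemma. The exponent $1/(d+2)$ is exactly where the entropy $t^{-d}$ balances $nt^2$.

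\textbf{Your route.} Your Step 1 uses that $W_1$ with Euclidean cost is a metric invariant under a common isometry. This is also the cost the paper's duality argument and Theorem~\ref{thm:W1_continuous} implicitly use. As a result, $W_1(\mu_{\bX},\mu)+W_1(\mu_{\bY},\nu)$ bounds the error uniformly in $W$, and no net over either $\mathcal{O}^{d}$ or the Lipschitz class is needed.

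\textbf{What your route buys.} It is shorter. It sidesteps a small wrinkle in the paper: the cited covering lemma is stated on $[0,1]^d$, while $f$ is evaluated at rotated points $Wy$ that can leave that cube. It is also sharper. In your argument $n^{-1/(d+2)}$ enters only as the McDiarmid deviation needed for failure probability $e^{-n^{d/(d+2)}}$, while the mean term is $O(n^{-1/d})$ for $d\ge 3$. So at the polynomial confidence level used in the ASE part you actually get $n^{-1/\max(d,2)}$ up to logarithms, which a single-scale net cannot give.

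\textbf{What the paper's route buys.} It relies only on Hoeffding and the covering lemma rather than on known empirical $W_1$ rates. It also does not need rotation invariance of the cost.

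\textbf{ASE step and bookkeeping.} Your ASE step is the same as the paper's: Theorem~\ref{thm:W1_continuous} plus a $2\to\infty$ bound, which the paper takes from Rubin-Delanchy et al.\ at rate $\log^c n/\sqrt n$. Your reading of the exponent as $n^{d/(d+2)}$ matches what the paper's proof actually derives. The paper itself only reaches $1-3n^{-a}$. The cleanest way to get the stated $1-2n^{-a}$ is to invoke the ASE bound at exponent $a+1$, so that $2n^{-a-1}+e^{-n^{d/(d+2)}}\le 2n^{-a}$ for large $n$.
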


This theorem tells us that we can consistently recover $\procwp[1]$ distances between the distributions at various times induced by an LPP from observations of networks at the corresponding times. The rate of this convergence appears to scale badly with the dimension of the latent positions, however this can be improved when the distributions admit a density or additional derivatives: see, e.g., \cite{chewi2025statistical}.

\section{Two models: London and Atlanta}\label{sec:T2M}



\begin{figure}
    \centering
    \includegraphics[width=\textwidth]{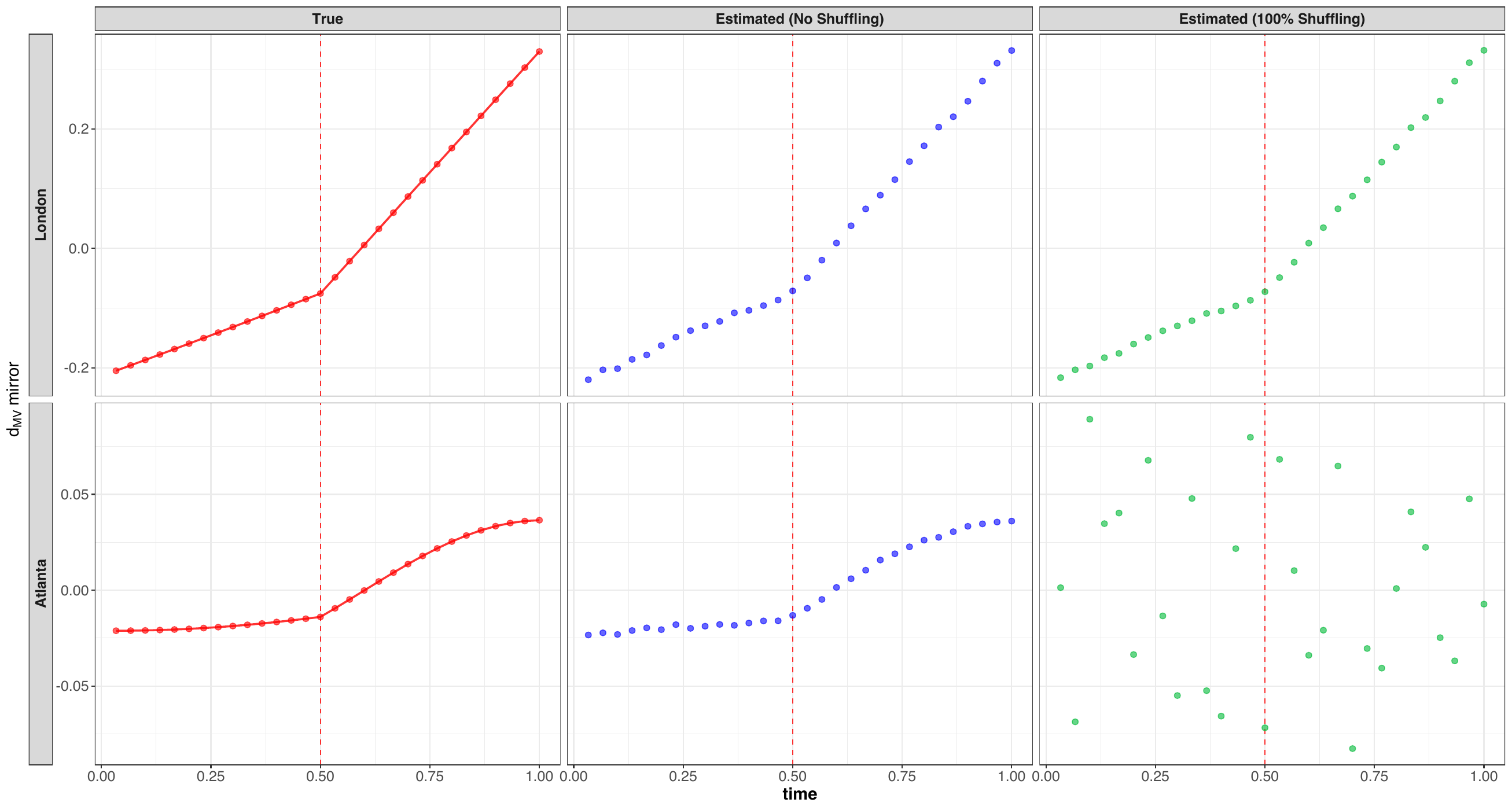}
    \caption{Estimated mirrors for London and Atlanta models after vertex shuffling. Both models exhibit a clear, estimable changepoint in the setting without shuffling. After shuffling the vertices, the London model shows little change in the estimated mirror, while the Atlanta model loses all relevant signal.}
    \label{fig:londonvsatlanta}
\end{figure}

To investigate the effect of vertex misalignment on the recovery of the network dynamics, we introduce two positive scalar LPPs with related but distinct behavior: the \emph{London model}, previously studied in \cite{chen2023discovering}, and the new \emph{Atlanta model}. Associated to these latent position processes is the corresponding London or Atlanta LPP TSG, whose time-varying adjacency matrices can be observed.

Both latent position processes, and hence both the London and Atlanta time series of graphs, have a first-order changepoint $t^*$ which we wish to estimate from observations of the network adjacencies. 
To localize the common changepoint $t^*$ in both models, we consider different functions of network observations, ranging from simple statistics such as average degree to more sophisticated localization techniques based on Euclidean mirrors. We examine properties of Euclidean mirrors for each model, as well as the behavior of the average degree and the behavior of corresponding mirrors under different levels of vertex shuffling. 

Both models have one-dimensional latent position processes that are discrete-time, finite-state space random walks taking values in the unit interval. In the London model, the latent position process begins at a fixed point, and at each time, the latent position $X_t$ can either stay fixed or move one step to the right with probability $p_t$. Moreover, $p_t$ is fixed at $p \in (0,1)$ for all times $t<t^{*}$, and thereafter $p_t$ is fixed at a different value $q$. Thus $t^*$ is a first-order changepoint since the distribution of the LPP increments is different before and after $t^*$. The Atlanta model is similar, but the initial distribution of latent positions is uniform over the state space, and at each time there is an equal probability of moving to the state to the right or to the left with probability $p_t$ (only allowing one direction at the boundaries), which changes similarly to the London model. One of the key differences between these two models is that the marginal distribution of the latent positions is different from one time to the next in the London model, whereas the marginal distributions of the latent positions is fixed for all times in the Atlanta model. This means that while the marginal distributions are informative for the changepoint in the London model (in fact, they are sufficient, as we show in Theorem~\ref{thm:London_MLE}), in the Atlanta model, the marginal distributions contain no information about the changepoint. This results in crucial differences in the estimability of the changepoint after vertex shuffling. 

The London model has an approximating piecewise linear Euclidean mirror that exhibits a slope change at $t^*$, and this structure persists even when vertices undergo shuffling over time. The Atlanta model has an approximating Euclidean mirror that exhibits a similar piecewise linear structure and slope change at $t^*$ {\em when the vertices are correctly aligned}, but this structure and slope change are very rapidly corrupted as the fraction of shuffled vertices increases. This is summarized in Figure \ref{fig:londonvsatlanta}.

\subsection{London model}\label{sec:London}
The London model is formally defined as follows.
\begin{model}[London Model LPP and TSG]\label{model:London} Let $m\geq 2$ be an integer. Let $c_L\geq0, \delta_m>0$ be constants satisfying $c+\delta_m m\leq 1$. Let $t^*\in(0,1)$. Denote $t^*_m = \lfloor t^*m\rfloor $, and put $t_i = i/m$ for $i\in\{1,\ldots,m\}$ and define the London model LPP as
\begin{align*}
X^{L}_0 & = c_L \quad \text{with probability 1}, \notag \\
X^{L}_{t_i}&=\begin{cases}
X^{(m)}_{t_{i-1}}+ \delta_m \quad \text{with probability $p_{t_i}$} \\
X^{(m)}_{t_{i-1}} \quad \text{with probability $1-p_{t_i}$},
\end{cases}
\end{align*}
where $p_{t_i} = p$ for $i < t_m^*$ and $p_{t_i} = q$ for $i \geq t_m^*$.

\end{model}

In both this section and the next, we will consider a true 1-dimensional piecewise-linear mirror that will be approximated by various methods, capturing the changepoint at $t^*$ as well as the different slopes $p$ and $q$ before and after the changepoint. We denote this as:

$$
\psi_Z^c (t) = \begin{cases}
     pt +c &\quad \text{if } t < t^*, \\
     pt^*+(t-t^*)q +c &\quad \text{if } t \geq t^*. 
 \end{cases} 
$$ 
When $c=0,$ we will drop the superscript.

Now let us describe mirror estimation for the London model under various metrics: The $\alpha-d_{MV}$ distance for $\alpha\in[0,1]$, which includes the unshuffled case all the way up to the completely shuffled case; the Wasserstein distance $W_1$; and the differences in expected average degree $d_{\mathrm{deg}}(X_t,X_s)=(n-1)|\EE[X_t]^2-\EE[X_s]^2|.$

\begin{thm} \label{thm:London_main}
Consider a London model LPP with $c_L=0, \delta_m=1/m$, and $t^*=t_m^*/m$. Let $\psi_{d}(t)$ denote the zero-skeleton mirror using the distance $d$, and let $\tilde{\psi}_d(t)$ be its linear interpolation between consecutive points. Then the Euclidean mirrors for this model satisfy:
\begin{itemize}

\item For $\alpha-d_{MV}$ distance with $0\le\alpha\leq 1$: As $m \to \infty$, the mirror is asymptotically Euclidean 1-realizable with mirror $\psi_Z$, namely, there is a $w\in\{\pm1\}$ such that:
$$
\sup_{t \in [0,1] } \bigg|\tilde{\psi}_{\alpha-d_{MV}}(t)-w\psi_Z(t)\bigg| \to 0;
$$

\item For $W_1$ distance: The $W_1$ distance is exactly Euclidean-1 realizable, and
$$
\psi_{W_1}(t_i) =\EE[X_{t_i}] =  \psi_Z(t_i)\quad  \forall\; i \in [m]; 
$$  

\item For expected average degree distance:
$$
\sqrt{ \frac {\psi_{\text{deg} }\left(t_i\right) }{n-1}} = \psi_Z(t_i)\quad \forall\;i\in[m].
$$

\end{itemize}
\end{thm}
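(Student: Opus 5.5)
The plan is to reduce everything to explicit moment computations for the scalar London process and then to a one-dimensional CMDS perturbation argument. With $c_L=0$ and $\delta_m=1/m$ we have $X_{t_i}=\frac1m\sum_{j\le i}B_j$ with independent $B_j\sim\mathrm{Bernoulli}(p_{t_j})$. Hence $\EE[X_{t_i}]=\frac1m\sum_{j\le i}p_{t_j}$, which equals $\psi_Z(t_i)$ at the grid points once the indexing convention for $p_{t_i}$ at $i=t^*_m$ is matched to $t^*=t^*_m/m$. In any case the discrepancy is at most $|q-p|/m$, uniformly in $i$. Also $\Var(X_{t_i})=\frac1{m^2}\sum_{j\le i}p_{t_j}(1-p_{t_j})\le \frac1{4m}$.

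\textbf{Step 1 (the $\alpha$-$d_{MV}$ dissimilarities).} In the positive scalar case the minimizing $W$ is $1$. For $s<t$ on the grid, $X_t-X_s=\frac1m\sum_{s<j\le t}B_j$, so
\[
d_{MV}^2(X_t,X_s)=\bigl(\EE X_t-\EE X_s\bigr)^2+\Var(X_t-X_s).
\]
Similarly, using the product coupling,
\[
\inddmv[2](X_t,X_s)=\bigl(\EE X_t-\EE X_s\bigr)^2+\Var X_t+\Var X_s.
\]
Both variance terms are $O(1/m)$. Therefore, for every $\alpha\in[0,1]$,
\[
\alpha\text{-}d^2_{MV}(X_{t_i},X_{t_j})=\bigl(\psi_Z(t_i)-\psi_Z(t_j)\bigr)^2+E_{ij},\qquad |E_{ij}|\le C/m,
\]
with $C$ independent of $i,j,\alpha$. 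The index-convention discrepancy in the mean also contributes only $O(1/m)$.

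\textbf{Step 2 (CMDS perturbation, the main obstacle).} Let $B_Z=-\frac12 P D_Z^{(2)}P$. This equals $\bar\psi\bar\psi^\top$, where $\bar\psi$ is the centered vector $(\psi_Z(t_i))_i$, so $B_Z$ has rank one with eigenvalue $\lambda=\|\bar\psi\|^2=\Theta(m)$, because $\psi_Z$ is nonconstant with slopes $p,q>0$. The observed matrix is $B=B_Z+\Delta$ with $\Delta=-\frac12 PEP$. Its entries are $O(1/m)$, so $\|\Delta\|_2=O(1)$ and every row of $\Delta$ has $\ell_2$ norm $O(m^{-1/2})$.

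By Weyl's inequality, $\hat\lambda=\lambda+O(1)$. By Davis--Kahan, the top eigenvector $\hat u$ satisfies $\|\hat u-w u\|=O(1/m)$ for some sign $w$. For the entrywise bound we write $\hat\psi=\sqrt{\hat\lambda}\,\hat u=B\hat u/\sqrt{\hat\lambda}$ and split this as $B_Z\hat u/\sqrt{\hat\lambda}+\Delta\hat u/\sqrt{\hat\lambda}$. The first term is $\bar\psi\,(\bar\psi^\top\hat u)/\sqrt{\hat\lambda}=w\bar\psi\,(1+O(1/m))$, and $\bar\psi$ has bounded entries. The second term has entries bounded by $\max_i\|\Delta_{i\cdot}\|/\sqrt{\hat\lambda}=O(1/m)$. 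Hence $\max_i|\hat\psi_i-w\bar\psi_i|\to0$.

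CMDS centers its output, so the comparison is with $\psi_Z$ up to the additive constant given by its grid mean. Equivalently, one can fix the translation by anchoring at $t=0$, and I would state this normalization explicitly. Finally, $\psi_Z$ is Lipschitz and the grid spacing is $1/m$, so the grid-point bound transfers to the linear interpolation $\tilde\psi$ uniformly on $[0,1]$.

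\textbf{Step 3 ($W_1$ and degree mirrors).} The process is nondecreasing in $t$, so the coupling $(X_t,X_s)$ given by the process itself satisfies $X_t\ge X_s$ almost surely. Thus $X_t$ stochastically dominates $X_s$, and the monotone (quantile) coupling gives $W_1(X_t,X_s)=\EE X_t-\EE X_s$ exactly. The matrix of $W_1$ distances is therefore exactly the distance matrix $|a_i-a_j|$ of the increasing points $a_i=\EE X_{t_i}$ on a line. CMDS then recovers the $a_i$ exactly up to sign and translation, which we fix by $a_0=0$, giving $\psi_{W_1}(t_i)=\EE X_{t_i}=\psi_Z(t_i)$.

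The degree case is identical with $a_i=(n-1)(\EE X_{t_i})^2$, which is again increasing in $i$. Exact 1-realizability gives $\psi_{\deg}(t_i)=(n-1)\psi_Z(t_i)^2$, and taking square roots yields the claim. I expect Step 2 to be the hardest part. Its delicate points are getting the entrywise rather than Frobenius control of the eigenvector perturbation, and handling the centering and sign conventions of CMDS.
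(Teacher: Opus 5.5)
Your proposal is correct. Its overall architecture matches the paper's: show that every squared dissimilarity matrix is entrywise within $O(1/m)$ of $\mathcal{D}_Z^{(2)}$ and transfer this to the CMDS output; handle $W_1$ through stochastic dominance; and treat the degree mirror as points $(n-1)(\EE X_{t_i})^2$ on a line.

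\textbf{Where your route differs.} The difference is in how you execute the first bullet.
\begin{itemize}
\item \emph{The paper.} It cites Theorem 2.26 of \cite{chen2024euclidean} for the aligned case and computes ind-$d_{MV}$ case by case: before, straddling, and after $t^*$. It then asserts that the cited proof ``goes through'' because that proof only uses $O(1/m)$ entrywise closeness. As the Atlanta argument shows, that cited proof rests on the $2\to\infty$ eigenvector perturbation theorem of \cite{cape2019two}.
\item \emph{Your Step 1.} You handle all $\alpha\in[0,1]$ at once through the mean--variance decomposition, with the variance terms bounded uniformly by $1/(2m)$. This also avoids having to get the explicit case formulas exactly right.
\item \emph{Your Step 2.} You prove the perturbation step directly, and the argument is valid. $B_Z=\bar\psi\bar\psi^\top$ is rank one with eigenvalue $\Theta(m)$, while $\|\Delta\|_2=O(1)$, so Weyl and Davis--Kahan control the top eigenpair. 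Writing $\hat\psi=B\hat u/\sqrt{\hat\lambda}$ then moves the error onto the row norms $O(m^{-1/2})$ of $\Delta$. This gives an $O(1/m)$ entrywise bound, the same rate the $2\to\infty$ theorem delivers, but self-contained and exploiting the rank-one structure.
\end{itemize}
The citation route is shorter and inherits the generality of the cited framework. Yours can be checked on the page and needs only elementary tools.

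\textbf{Your normalization remarks are sharper than the paper's.}
\begin{itemize}
\item CMDS output is centered. So both the comparison with $w\psi_Z$ and the exact equalities in bullets two and three hold only up to translation and sign. The paper states them without that caveat.
\item The model as written sets $p_{t_i}=q$ for $i\ge t_m^*$. This shifts $\EE X_{t_i}$ from $\psi_Z(t_i)$ by $(q-p)/m$ after the change. The paper's proof silently uses the convention $p_{t_i}=p$ for $i\le t_m^*$ instead, and you correctly flag the mismatch.
\end{itemize}

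One small fix: the grid is $\{t_1,\dots,t_m\}$, so anchoring at ``$a_0=0$'' requires adding $t_0=0$ to the set of CMDS points. Otherwise, state the exact claims up to translation.
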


Theorem \ref{thm:London_main} shows as $m \to \infty$, the first dimension of both $\psi_{d_{MV}}$, $\psi_{\alpha-d_{MV}}$ and $\psi_{\text{ind-}d_{MV}}$ converge to $\psi_Z$ uniformly, indicating that shuffling does not affect the mirror's ability to localize $t^*$. 
Further, $\psi_{W_1}$ and the square root of $\psi_{\text{deg}}$ lie on $\psi_Z$ exactly when $t_m^* = mt^*$, so vertex alignments are not necessary for changepoint localization in the London model.

In Figure \ref{fig:London-mirrors}, we compare $\psi_{d_{MV}}$,  $\psi_{0.2-d_{MV}}$, $\psi_{\text{ind-}d_{MV}}$, $\psi_{W_1}$ and $\sqrt{\psi_{\text{deg}}/(n-1)}$ (black dots) with their estimates (blue dots), derived from a realized TSG from London model.
The function $0.9\psi_Z$ is also plotted as a red line for reference.
The estimates are the first dimension of CMDS applied to pairwise distance matrices of the ASEs.
With $n=100$, all estimated mirrors lie close to their theoretical values, and these theoretical values themselves lie close to the asymptotic mirror $\psi_Z$, shown in red. In particular, they all show approximately piecewise linear structure and a slope change at $t^*$. Among these mirrors, the average degree appears to have the best performance.

\begin{figure}[htbp]
    \centering
        \includegraphics[width=.9\linewidth]{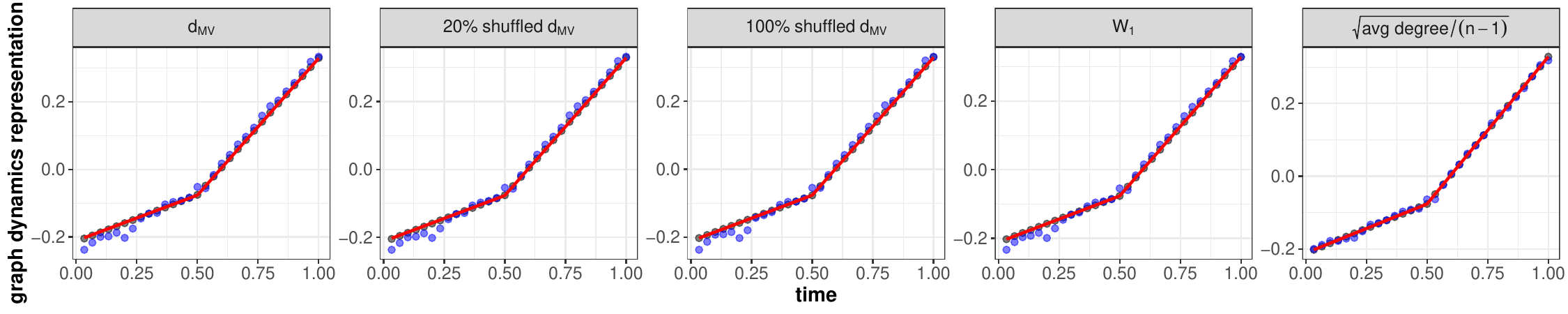}
    \caption{Comparison of $\psi_{d_{MV}}$,  $\psi_{0.2-d_{MV}}$, $\psi^L_{\text{ind-}d_{MV}}$, $\psi^L_{W_1}$ and $\sqrt{\psi_{\text{deg}}/(n-1)}$ in black and their estimates in blue based on one realized TSG generated from London model with $n = 100$, $p = 0.3$, $q = 0.9$, $m = 30$ and $t^* = 0.5$, $c_L = 0.1$, $\delta_m = 0.9/30$.  
    All mirrors show clear piecewise linear structure with slope change at $t^* = 0.5$.
    Among them, the average degree mirror lies closest to its expected value.}
    \label{fig:London-mirrors}
\end{figure}

One of the key properties of the London model is that no information is lost from shuffling. We can make this statement precise by showing that the marginal counts for each time are sufficient for the LPP, so that no information about $p,q,$ and $t^*$ is lost when the vertex correspondence is removed. This also leads to a maximum likelihood estimator for the changepoint.

\begin{thm}\label{thm:London_MLE}
Let $\{X_i\}$ be an iid sample from the London model, and for each $t\in \{1,\ldots, m\}$, let 
$$
c_t(k):= \frac{\#\{i: X_i(t)=c+\delta k\}}{n},
$$
$0\leq k\leq m$. 
Then there exists a function $h$ such that the likelihood $\mathrm{lik}(\{X_i(t)\}_{i,t}|p,q,t^*)=h(\{c_t(k)\}_{t,k}|p,q,t^*)$, so the marginal counts $c_t(k)$ are sufficient statistics for the London Model. For each $t\in\{1,\ldots, m\}$, the MLE estimates for $\hat{p}$ and $\hat{q}$ assuming that the changepoint occurs at time $t$ are given by
$$
\hat{p}(t):=\frac{1}{t-1}\sum_{j=0}^{t-1} j \frac{c_{t-1}(j)}{n},\quad \hat{q}(t):= \frac{1}{m-t+1}\sum_{k=0}^m k\frac{c_m(k)-c_{t-1}(k)}{n}.
$$
\end{thm}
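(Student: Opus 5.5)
The proof rests on the fact that every London trajectory is a nondecreasing lattice path. Each increment $X_i(t)-X_i(t-1)$ lies in $\{0,\delta\}$, so the number of moves vertex $i$ makes up to time $t$ is $(X_i(t)-c)/\delta$. I would first write the likelihood of the iid sample explicitly and then show that it depends on the data only through a telescoping sum of marginal means.

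\textbf{Step 1 (likelihood).} By Model~\ref{model:London}, the increments are conditionally independent Bernoulli steps with success probability $p_t$, and the trajectories of distinct vertices are independent. Writing $\xi_i(t)=(X_i(t)-X_i(t-1))/\delta\in\{0,1\}$, for any feasible data set we get
$$\mathrm{lik}=\prod_{t=1}^m p_t^{N_t}(1-p_t)^{n-N_t},\qquad N_t:=\sum_{i=1}^n \xi_i(t).$$
The parameters enter only through $p_t=p\,\mathbf{1}\{t<t^*_m\}+q\,\mathbf{1}\{t\geq t^*_m\}$. Infeasible data, meaning a decrease or a jump other than $\delta$, has likelihood zero. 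I would note that feasibility does not depend on $(p,q,t^*)$ when $p,q\in(0,1)$.

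\textbf{Step 2 (sufficiency).} The key identity is the telescoping
$$N_t=\frac{1}{\delta}\sum_i\bigl(X_i(t)-X_i(t-1)\bigr)=n\sum_{k=0}^m k\,\bigl(c_t(k)-c_{t-1}(k)\bigr),$$
because $\sum_i X_i(t)=n\sum_k (c+\delta k)\,c_t(k)$. This expresses $N_t$ through the marginal counts alone, so setting $h(\{c_t(k)\}\,|\,p,q,t^*)$ equal to the product above with $N_t$ replaced by this expression proves the factorization. Equivalently, the Fisher--Neyman criterion applies with the trivial factor $1$ on feasible data.

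\textbf{Step 3 (MLE).} Fix a candidate changepoint $t$. The log-likelihood splits as
$$S_-\log p+\bigl(n(t-1)-S_-\bigr)\log(1-p)+S_+\log q+\bigl(n(m-t+1)-S_+\bigr)\log(1-q).$$
Here $S_-=\sum_{s<t}N_s$ and $S_+=\sum_{s\ge t}N_s$. Both again telescope: $S_-=n\sum_k k\,c_{t-1}(k)$, using that $c_0$ is the point mass at $k=0$, and $S_+=n\sum_k k\,(c_m(k)-c_{t-1}(k))$. Each term is a binomial log-likelihood, so it is strictly concave with maximizers
$$\hat p(t)=\frac{S_-}{n(t-1)},\qquad \hat q(t)=\frac{S_+}{n(m-t+1)}.$$
These are the stated formulas, up to matching the paper's normalization of $c_t(k)$: if $c_t$ is already a proportion, the extra $1/n$ in the statement should be read as $c_t$ being a raw count. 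The changepoint MLE then maximizes the profile likelihood $t\mapsto \ell(\hat p(t),\hat q(t),t)$ over $t$.

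\textbf{Main obstacle.} The mathematics is routine once the telescoping identity is in hand. I expect the delicate part to be bookkeeping. First, the boundary convention ($p$ is used for $i<t^*_m$, so exactly $t-1$ steps use $p$) must be handled correctly. Second, the degenerate cases $S_-=0$ or $S_-=n(t-1)$ put the MLE on the boundary of $[0,1]$ and need a one-line remark. Third, the normalization of $c_t(k)$ must be made consistent with the formula in the statement.
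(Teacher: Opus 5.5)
Your proposal is correct and follows the same route as the paper. Both write the likelihood as a product of Bernoulli step probabilities, telescope the per-step move counts into $n\sum_k k\,(c_t(k)-c_{t-1}(k))$, and reduce the fixed-$t$ maximization to two binomial MLEs. Your version is somewhat more careful than the paper's in two places. You put the feasibility indicator into the Fisher--Neyman factorization. You also correctly note that the extra $1/n$ in the stated formulas only makes sense if $c_t(k)$ is read as a raw count, which is what the paper's own log-likelihood implicitly assumes.
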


We can interpret the estimator $\hat{p}(t)$ as the proportion of all opportunities for the $i$th latent position to increase up to time $t-1$ at which it actually increased, averaged over all $n$ latent positions. The estimator $\hat{q}(t)$ is similar, but only counts increases after time $t-1$. In the next section, we study a model where shuffling has much more serious inferential consequences.

\subsection{Atlanta model}\label{sec:Atlanta}

The Atlanta model is similar to the London model, but allows for movement of the latent positions in both directions. We now give the precise definition.

\begin{model}[Atlanta Model]\label{mod:Atlanta} 
Let $N \geq 2$ be a fixed number of states, and let $\delta_N = \frac{c_A}{N-1}$, where $c_A \in (0,1)$ is a constant. Let $m \geq 2$ be the number of time points, and set $t_i = i/m$ for $i \in [m]$. Let $t^* \in (0,1)$, and let $t^*_m = \lfloor t^*m \rfloor$. The Atlanta model LPP is given by:
\begin{align*}
X_{t_0} &\sim \mathrm{Uniform}(\{0,\delta_N,\ldots,(N-1)\delta_N=c_A\}), \\
\text{Middle: if $X_{t_{i-1}} \in (0, c_A)$, } X_{t_i} &=\begin{cases}
X_{t_{i-1}} + \delta_N & \text{with probability $p_{t_i}$}, \\
X_{t_{i-1}} - \delta_N & \text{with probability $p_{t_i}$}, \\
X_{t_{i-1}} & \text{with probability $1 - 2p_{t_i}$},
\end{cases} \\
\text{Left boundary: if $X_{t_{i-1}} = 0$, } X_{t_i} &=\begin{cases}
X_{t_{i-1}} + \delta_N & \text{with probability $p_{t_i}$}, \\
X_{t_{i-1}} & \text{with probability $1 - p_{t_i}$},
\end{cases} \\
\text{Right boundary: if $X_{t_{i-1}} = c_A$, } X_{t_i} &=\begin{cases}
X_{t_{i-1}} - \delta_N & \text{with probability $p_{t_i}$}, \\
X_{t_{i-1}} & \text{with probability $1 - p_{t_i}$},
\end{cases}
\end{align*}
where $p_{t_i} = p$ for $i < t^*_m$, and $p_{t_i} = q$ for $i \geq t^*_m$, with $p \neq q$.
\end{model}

The Atlanta model is parameterized by $N$, $m$, $p$, $q$, $t^*$, and $c_A$. 
It is a Markov process with the uniform distribution as a stationary distribution. 
Since the process starts in this distribution, the marginal distribution remains unchanged for all times. Namely, $X_{t_i} \overset{\mathcal{\text{dist}}}{=} X_{t_0}$ for any $i \in [m]$.
This already suggests that any method that utilizes only marginal information will not be informative for $t^*$.


The following theorem provides an analogue of Theorem~\ref{thm:London_main} for the Atlanta model.

\begin{thm}\label{thm:Atlanta-main}
Consider an Atlanta model LPP for some $N,m,c_A,$ and $t^*=t_m^*/m$. The distances and Euclidean mirrors for this model satisfy:
\begin{itemize}
\item For the $d_{MV}$ distance:
With fixed $m$, there exists a $w \in \{\pm1\}$ such that as $N \to \infty$,
$$
\max_{i \in [m]}\bigg| \frac{N(N-1)}{2c_A^2m}\psi_{d^2_{MV}}(t_i) -  w\psi_Z(t_i) \bigg| \to 0.
$$


\item For the ind-$d_{MV}$ distance:
$$ \text{ind-}d^2_{MV}(X_{t_i}, X_{t_j}) = 
   \begin{cases} 
     2\,\mathrm{Var}(u) = \frac{c_A^2}{6}\frac{N+1}{N-1} , & i \neq j,\\
     0 & i = j,
   \end{cases} 
$$
and the first $m-1$ dimensions of CMDS applied to these distances may be written as
$$
\mathbf{\Psi}_{\text{ind}-d_{MV}} = \sqrt{\frac{c_A(N+1)}{6(N-1)}} \bU_{m-1},
$$
where $\bU_{m-1}\in \RR^{m\times m-1}$ is any matrix satisfying $\bU^{\top}_{m-1}\bU_{m-1} = I_{m-1}$ and $e^\top \bU_{m-1}=0$, for $e\in\RR^m$ the matrix of all ones.

\item For $\alpha$-$d_{MV}$ distance, $\alpha\in[0,1)$: the first dimension of CMDS satisfies
$$
\psi_{\alpha -d_{MV}} = \sqrt{1-\alpha + \frac{\alpha c_A(N+1)}{12(N-1)\lambda_1}   }      \psi_{d_{MV}},
$$ 
where $\lambda_1$ is the largest eigenvalue of the doubly-centered matrix of squared $d_{MV}$ distances, $-\frac{1}{2}H \mathcal{D}_{d_{MV}}^{(2)}H$, $H=I-ee^\top/m$.

\item For $W_1$ distance: The distance matrix is exactly 1-Euclidean realizable with
$$
\psi_{W_1}(t_i) = 0 \quad\forall\; i \in [m].
$$

\item For expected average degree distance: The distance matrix is exactly 1-Euclidean realizable with
$$
\psi_{\text{deg}}(t_i) = 0 \quad \forall\;i \in [m].
$$
\end{itemize}
\end{thm}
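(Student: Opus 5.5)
We treat the five items separately. All of them rest on one structural fact: $X_{t_i}$ is uniform on the grid $\{0,\delta_N,\ldots,c_A\}$ for every $i$. This holds because the transition kernel of Model~\ref{mod:Atlanta} is symmetric and therefore doubly stochastic, so the uniform law is stationary.

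\textbf{The marginal items.} The $W_1$ and degree items follow immediately from this fact. Equal marginals give $W_1(X_{t_i},X_{t_j})=0$ and $\EE[X_{t_i}]^2=\EE[X_{t_j}]^2$, so both distance matrices vanish, and CMDS of the zero matrix returns $\Psi=0$.

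\textbf{The ind-$d_{MV}$ item.} For scalar positive latent positions the minimizing $W$ is $1$. Taking independent copies $u,u'$ uniform on the grid,
\[
\EE(u-u')^2=2\Var(u).
\]
The grid $\{0,\ldots,N-1\}$ has variance $(N^2-1)/12$, so
\[
2\Var(u)=2\delta_N^2\,\frac{N^2-1}{12}=\frac{c_A^2(N+1)}{6(N-1)}.
\]
The squared distance matrix is therefore $\kappa(ee^\top-I)$ with $\kappa=2\Var(u)$. Double centering gives $B=-\tfrac12 H\,\kappa(ee^\top-I)H=\tfrac{\kappa}{2}H$. This matrix has eigenvalue $\kappa/2$ with multiplicity $m-1$, its eigenspace is $e^\perp$, and its remaining eigenvalue is $0$. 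CMDS consequently returns $\sqrt{\kappa/2}\,\bU_{m-1}$ for any orthonormal basis $\bU_{m-1}$ of $e^\perp$. I will reconcile the displayed constant against this ($\kappa/2=c_A^2(N+1)/(12(N-1))$), which looks like it has a typo in the power of $c_A$.

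\textbf{The $\alpha$-$d_{MV}$ item.} The squared distance matrix is linear in $\alpha$:
\[
D^{(2)}_\alpha=(1-\alpha)D^{(2)}_{d_{MV}}+\alpha\kappa(ee^\top-I).
\]
Hence $B_\alpha=(1-\alpha)B+\alpha\tfrac{\kappa}{2}H$. Since $B$ is doubly centered, $HB=B$, so $H$ acts as the identity on the range of $B$. In particular the top eigenvector $u_1\in e^\perp$ of $B$ is an eigenvector of $B_\alpha$ with eigenvalue $(1-\alpha)\lambda_1+\alpha\kappa/2$. Every other eigenvector of $B_\alpha$ also lies in $e^\perp$ and has eigenvalue $(1-\alpha)\lambda_k+\alpha\kappa/2\le (1-\alpha)\lambda_1+\alpha\kappa/2$, so $u_1$ remains the top eigenvector. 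The first CMDS coordinate is then $\sqrt{(1-\alpha)\lambda_1+\alpha\kappa/2}\,u_1=\sqrt{1-\alpha+\alpha\kappa/(2\lambda_1)}\,\psi_{d_{MV}}$, which matches the stated form.

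\textbf{The $d_{MV}$ item (main obstacle).} Here $d^2_{MV}(X_{t_i},X_{t_j})=\EE(X_{t_i}-X_{t_j})^2$. We compute it with the chain started in stationarity:
\[
\EE(X_{t_j}-X_{t_i})^2=\delta_N^2\,\EE(S_{j}-S_i)^2,
\]
where $S$ is the reflected lazy walk on $\{0,\ldots,N-1\}$. For fixed $m$ and $N\to\infty$, the number of steps $j-i\le m$ is bounded. Each step has conditional second moment $2p_k$ in the interior and $p_k$ at the boundary, and the boundary is visited with probability $O(m/N)$. The plan is to show that the step increments are uncorrelated up to an $O(1/N)$ boundary error, using the martingale property of the walk away from the boundary. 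This gives
\[
\EE(S_j-S_i)^2=2\sum_{k=i+1}^{j}p_{t_k}+O(m^2/N),
\]
so that
\[
\frac{N(N-1)}{2c_A^2 m}\cdot d^2_{MV}(X_{t_i},X_{t_j})\cdot\frac{N-1}{N}\approx \frac{1}{m}\sum_{k=i+1}^j p_{t_k}=|\psi_Z(t_j)-\psi_Z(t_i)|.
\]
I suspect the stated normalization is off by $m$ or $N/(N-1)$ factors, and I will fix constants as needed. The normalized squared-distance matrix thus converges entrywise to $|\psi_Z(t_i)-\psi_Z(t_j)|$. The limit is a squared distance rather than a distance, so I read $\psi_{d^2_{MV}}$ as the CMDS of the matrix whose entries are the squared $d_{MV}$ values, treated as dissimilarities. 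For a monotone $\psi_Z$ these values are exactly $1$-realizable with mirror $\psi_Z$. CMDS is continuous under entrywise perturbation of the input matrix when there is an eigengap; here the gap is between $\lambda_1$ and $0$, and it is positive because $\psi_Z$ is nonconstant. Davis--Kahan then gives convergence up to the sign $w$, and the maximum over the finitely many $i$ follows. The delicate part is controlling the boundary correction uniformly. That error is $O(1/N)$ because the stationary mass at the two boundary states is $2/N$.
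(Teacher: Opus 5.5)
Your proposal is correct, and for the main ($d_{MV}$) item it takes a genuinely different and much more elementary route than the paper. The paper writes $d^2_{MV}(X_{t_i},X_{t_j})=\frac{\delta_N^2}{N}\tr(T_p^{k}T_q^{l}M)$ and evaluates this trace exactly. It uses that all $T_p$ share a cosine eigenbasis, computes $u_k^\top M u_k$ in closed form, and applies a Vieta (Basel-type) identity and a Fourier/residue evaluation of $\sum_m\cos^2(\omega_m/N)\sin^{2t}(\omega_m/N)$. This gives $2(N-1)(kp+lq)$ plus an $N$-independent polynomial in $p,q$, valid for $N>k+l$, which is then bounded by an unspecified constant $C_m$.

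You instead split $\EE(S_j-S_i)^2$ into squared increments and cross terms. The step you leave as a plan is one line: with $\xi_k=S_k-S_{k-1}$,
\[
|\EE[\xi_k\xi_l]|=\bigl|\EE\bigl[\xi_k\,\EE(\xi_l\mid S_{l-1})\bigr]\bigr|\le p_{t_l}\,\PP\bigl(S_{l-1}\in\{0,N-1\}\bigr)=\frac{2p_{t_l}}{N},\qquad k<l,
\]
together with $\EE\xi_k^2=2p_{t_k}(1-1/N)$. Both hold at every step, across the changepoint, because $T_p$ and $T_q$ each fix the uniform law. Your route needs no spectral information and no condition $N>k+l$. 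It also makes the $m$-dependence explicit: it shows the paper's correction polynomial is at most $2\max(p,q)K(K-1)$ in absolute value, with $K=j-i$ (for two steps, both approaches give the cross term $-4p^2/N$). What the paper's computation buys is an exact finite-$N$ formula.

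For the CMDS step, Weyl plus Davis--Kahan suffices because $m$ is fixed, so the paper's appeal to a $2\to\infty$ eigenvector bound is more than needed. Your marginal, ind-$d_{MV}$ and $\alpha$-$d_{MV}$ arguments are essentially the paper's.

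A few corrections to your side remarks:
\begin{itemize}
\item The normalization needs no fixing. We have $\frac{N(N-1)}{2c_A^2m}\cdot\frac{c_A^2}{(N-1)^2}\cdot 2(j-i)p=\frac{N}{N-1}\cdot\frac{(j-i)p}{m}$, the $1/m$ is exactly the slope of $\psi_Z$ in $t=i/m$, and $N/(N-1)\to1$.
\item The ind-$d_{MV}$ display is wrong in two places, not one. The correct scale is $\sqrt{c_A^2(N+1)/(12(N-1))}$, which is what your $\kappa/2$ gives. The $\alpha$-item likewise needs $c_A^2$, so it ``matches the stated form'' only modulo that typo.
\item You share one caveat with the paper. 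Your identity $\frac1m\sum_{k=i+1}^{j}p_{t_k}=|\psi_Z(t_j)-\psi_Z(t_i)|$ requires the step into $t_{t^*_m}$ to use $p$; the paper's proof makes the same assumption by writing $T_p^{t^*_m-i}T_q^{j-t^*_m}$. Under Model~\ref{mod:Atlanta}'s literal convention, $p_{t_i}=q$ for $i\ge t^*_m$, so the fixed-$m$ limit has its kink at $t^*-1/m$.
\item In any case, CMDS recovers $\psi_Z$ only after centering.
\end{itemize}
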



Note that in the Atlanta model, the number of time points $m$ and the number of states $N$ are two independent parameters.
Unlike Theorem \ref{thm:London_main}, the convergence of $\psi^A_{d^2_{MV}}$ occurs as $N$ grows, while $m$ can remain fixed.
However for both models, the partitioning in time and in latent position space becomes increasingly fine, and in-fills the respective space. In the Atlanta model, $\psi^A_{d^2_{MV}}$ is asymptotically piecewise linear, while $\psi^A_{d_{MV}}$ is not, as shown in Figure \ref{fig:Atlanta-mirrors} (second and third panels), though both exhibit an abrupt change at $t^*$.
In the simulation section, we demonstrate that the MSE is comparable for both $\hat{\psi}^A_{d^2_{MV}}$ and $\hat{\psi}^A_{d_{MV}}$, with $\hat{\psi}^A_{d^2_{MV}}$ performing slightly better.

Under shuffling, the first $m-1$ dimensions of CMDS are given by any orthonormal basis of the subspace of $\RR^{m}$ orthogonal to the vector of all ones, so this matrix is completely uninformative about $t^*$. Similarly, $\psi^A_{W_1}$ and $\psi^A_{\text{avg-degree}}$ are completely flat, since they only depend on the marginal distribution at each time, which stays fixed for this model. 

The behavior of the mirror under partial shuffling merits additional attention. When $\alpha\in(0,1)$, it is simply a rescaling of $\psi_{d_{MV}}^A$, but when $\alpha=1$, we obtain the uninformative, fully-shuffled mirror. While theoretically this mirror contains full information about $t^*$ even with $\alpha$ just slightly less than 1, in the case of estimating the mirror from an observed matrix, the noise level increases, as seen in panels 2 and 3 of Figure~\ref{fig:Atlanta-mirrors}.

\begin{figure}[htbp]
    \centering
        \includegraphics[width=\linewidth]{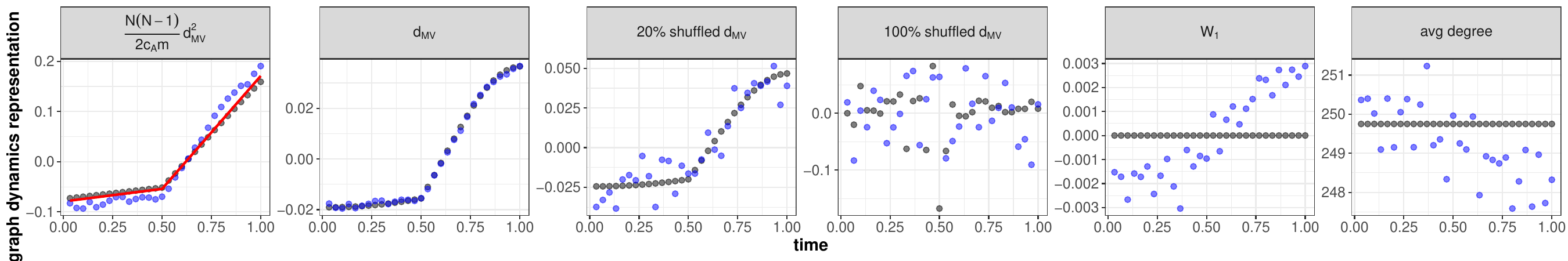}
    \caption{ Comparison of $\psi_{d^2_{MV}}$,  $\psi_{d_{MV}}$, $\psi_{0.2-d_{MV}}$, $\psi_{\text{ind-}d_{MV}}$, $\psi_{W_1}$ and $\psi_{\text{deg}}$ in black and their estimates in blue based on one realized TSG generated from Atlanta model with $n = 1000$, $p = 0.05$, $q = 0.45$, $m = 30$ and $t^* = 0.5$, $c_A = 0.8$, $N = 50$. 
    Only the left three panels show an abrupt change at $t^* = 0.5$, indicating only mirrors using the true vertex alignment are informative about $t^*$.
    Since the ind-$d_{MV}$, $W_1$, and average degree mirrors either lose the true alignment or ignore it, they have little information about $t^*$.
    }
    \label{fig:Atlanta-mirrors}
\end{figure}

In this section we introduced two models, London and Atlanta, and thoroughly analyze their mirrors' properties.
While information about $t^*$ can be obtained from the observed adjacency matrices when the vertex correspondence is known, the situation is very different once the vertex correspondence is lost. The fundamental reason for this is that in London model the marginal distributions $\EE[X^L_t]$ are sufficient for $t^*$, while in the Atlanta model, the marginal is constant across time. As such, true alignment is essential for estimation of $t^*$ in the Atlanta model. We will investigate this in more detail through simulations in the next section.


\section{Simulations}\label{sec:Simulations}
\subsection{Preliminaries and construction}\label{sec:Simulation_Preliminaries}

We aim to localize $t^*$ in an observed TSG generated from the London or Atlanta model.
We set the number of vertices, $n$, for the graphs and generate the time series of networks comprising $m$ graphs according to London model \ref{model:London} and Atlanta model \ref{mod:Atlanta}. \footnote{All of our codes are available at \url{https://github.com/Graph-matched-mirrors/Vertex-alignment-and-changepoint-localization-in-network-time-series}.} To prevent the graph from being too sparse or too dense, for the London model, we choose the starting point as $c_L=0.1$, resulting in $\delta_m=\frac{0.9}{m}$; for the Atlanta model, we set $u_N$ as a uniform distribution supported on discrete set $\{0.1, 0.1+\delta_N,...,0.9\}$, with $\delta_N = \frac{.9-.1}{N-1}$.
Throughout, we set $t^*$ to be $\frac{1}{2}$, select an even number of time points $m$, then choose $p$ and $q$.
In the Atlanta model, we always pick $p,q$ such that $0 \le p ,q \le 0.5$.

In the London model, for fixed values of $n$, $m$, $p$, $q$, and $t_m^*=\frac{m}{2}$, we generate a time series of latent position matrices according to Model~\ref{model:London}.
We fix a vector $\bX_{t_0} \in \R^{n\times 1}$, with all entries set to $0.1$.
Until a changepoint at $t_m^*=m/2$, each entry independently increases by $\delta_m$ with probability $p$ at each time step, and stays constant with probability $1-p$.
After the changepoint, this probability switches to $q$.
We obtain $\{\bX^L_{t_1},\cdots,\bX^L_{t_m}\}$ as the realized latent position matrices for London model. 

In the Atlanta model, we fix $n$, $m$, $p$, $q$, $t_m^*=\frac{m}{2}$ and number of states $N$.
For $\bX_{t_1}$ we draw $n$ i.i.d sample from $u_N$.
Before the changepoint at $t_m^*=m/2$, each entry can move up or down by $\delta_N$ with probability $p$ or remain unchanged with probability $1-2p$. 
If the entry is at either boundary $0.1$ or $0.9$, it will remain unchanged with probability $1-p$ and jump away from the boundary with probability $p$.  
After the changepoint, the probability $p$ is replaced by $q$.
We then obtain $\{\bX^A_{t_1},\cdots,\bX^A_{t_m}\}$ as the realized latent position matrices for Atlanta model.  For both models, the final time series of graphs, with adjacency matrices $\{\mathbf{A}_{t_1},\cdots,\mathbf{A}_{t_m}\}$, is generated by creating a Random Dot Product Graph (RDPG) from the latent positions $\mathbf{X}_t$ at each time point.

For a realized aligned TSG $\bA_{t_1},\cdots,\bA_{t_m}$ with number of vertices $n$, we denote the ASE results in 1 dimension as $\hat{\bX}_{t_1},\cdots,\hat{\bX}_{t_m}$.
First we consider $\hat{d}_{MV}$ on the aligned TSG: we construct 
$$
(\hat{\mathcal{D}}_{\hat{d}_{MV}})_{k,s} = \frac{1}{\sqrt{n}}\|\hat{\bX}_{t_k} - \hat{\bX}_{t_s}\|_F, ~~ \forall k,s \in [m],$$ then apply CMDS to this matrix into $c=1$ dimension, obtaining $\hat{\psi}_{\hat{d}_{MV}}$.

To study the effects of losing vertex alignment, we fix a proportion of the vertices to be shuffled, $\alpha$, then obtain an $\alpha$-shuffled TSG  $\{P_{n,\alpha}^1\bA_{t_1}(P_{n,\alpha}^1)^{\top},\cdots,P_{n,\alpha}^m\bA_{t_m}(P_{n,\alpha}^m)^{\top}\}$ as in Definition~\ref{def:alpha-shuffled-tsg}, but proceed as though the TSG were aligned. This gives a distance matrix
$$
\left(\hat{\mathcal{D}}_{\hat{d}_{MV}-\alpha\text{-shuffled-TSG}}\right)_{k,s} = 
\frac{1}{\sqrt{n}} \|P^k_{n, \alpha} \hat{\bX}_{t_k} - P^s_{n, \alpha}\hat{\bX}_{t_s} \|_F, ~~~~ \forall k,s \in [m] ,
$$ with corresponding mirror $\hat{\psi}_{\hat{d}_{MV}\text{-}\alpha\text{-shuffled-TSG}}$.
We also consider applying graph matching to the shuffled TSG before computing $\hat{d}_{MV}$: this is described further in Section \ref{sec:Simulations_MSE_Metrics}.  

For the first of our measures which does not make use of the vertex alignment, we use the $W_1$ distance on the empirical measures formed by ASEs of the TSG with $d_{\text{ASE}}=1$. This gives a distance matrix 
$$
\left(\hat{\mathcal{D}}_{W_1}\right)_{k,s} 
 {=}  \sum^n_{i=1}  \biggl|(\hat{\bX}_{t_k})_{(i)} - (\hat{\bX}_{t_s} )_{(i)}\biggr|, ~~~ \forall k,s\in[m],
$$
where $(\hat{\bX})_{(i)}$ denotes the $i$-th order statistic of $\hat{\bX}$, which is possible since this is a 1-dimensional vector. The corresponding mirror $\hat{\psi}_{W_1}$ is the first dimension of CMDS applied to this matrix. Lastly we consider the average degree of each graph as a trivial representation of the graph dynamics:
$$
\hat{\psi}_{\text{avg-degree}}(t_k)= \frac{1}{n} \sum^n_{i =1} \sum^n_{j \neq i} \left(\bA_{t_k}\right)_{i,j} ~~ \forall k \in[m]. 
$$

As shown in Theorem \ref{thm:London_main} and Theorem \ref{thm:Atlanta-main}, $t^*$ in both the London and Atlanta models can be revealed as the point of slope change in a piecewise linear mirror. 
This piecewise linearity is achieved either asymptotically or by applying specific transformations, depending on the notion of distance. 
To enable a principled comparison between representations, we apply transformations to make all mirrors approximately piecewise linear.
For the London model, we use the $\hat{d}_{MV}$ distance and choose a large value for $m$, since $\psi^L_{d_{MV}}$ becomes asymptotically piecewise linear as $m$ increases.
For the average degree in the London model, we use the square root of the average degree to achieve piecewise linearity.
For the Atlanta model, we use $\hat{d}^2_{MV}$ instead of $\hat{d}_{MV}$, and select a large value for $N$ for the same reason.

We localize $t^*$ by estimating the slope change in our one-dimensional mirror estimate.
To that end, we design an $l_2$ localization estimator described in Algorithm \ref{alg:2}, which is revised from the $l_{\infty}$ localizer in \cite{chen2024euclidean}.
\begin{algorithm}
\caption{$l_2$ localization for piecewise linear data}
\label{alg:2}
\begin{algorithmic}[1]
\State Input: $m$ pairs of observations $\{(t_1,y_1),(t_2,y_2),\cdots,(t_m,y_m)\}$ with each $t_i\in \R$,$y_i\in \R$.
\State \textbf{for k from $2$ to $m-1$:}
\State \quad Define $$S_k:=\min_{\alpha, \beta_L, \beta_R \in \R} 
\sum^m_{ i = 1 } \left( \alpha + \beta_L(t_i-t_k)+(\beta_R-\beta_L)(t_i-t_k)I_{\{t_i > t_k\}} - y_i \right)^2.$$
\State Find the smallest $k_0$ such that $S_{k_0}= \min\{S_2,S_3,\cdots,S_{m-1}\}$ and set $\hat{t}=t_{k_0}$.
\State Output: $\hat{t}$. 
\end{algorithmic}
\end{algorithm}

Note that the minimization problem defining $S_k$ can be solved analytically by ordinary least squares for each fixed value of $k$. For each Monte Carlo realization, we obtain an estimated mirror, then apply this $l_2$ localization method to get an estimated changepoint $\hat{t}_{mc}$. We compute the squared error $(\hat{t}_{mc} - t^*)^2$, then run the Monte Carlo simulation $nmc$ times, recording the mean of these as the mean square error (MSE), as well as the sample standard deviation over the realizations: 
$$
\text{MSE} := \frac{1}{nmc} \sum_{mc=1}^{nmc} \left(\hat{t}_{mc} - t^*\right)^2, \quad 
\text{std} := \sqrt{\frac{1}{nmc-1} \sum_{mc=1}^{nmc} \left( \left(\hat{t}_{mc} - t^*\right)^2 - \text{MSE} \right)^2}.
$$
Because $\hat{t}_{mc}$ are i.i.d with expectation $\EE[(\hat{t} - t^*)^2]$, using a normal approximation, we can construct a confidence interval for the $\EE[(\hat{t} - t^*)^2]$ as:$$
\text{error bars} = \text{MSE} \pm \frac{1.96}{\sqrt{nmc}} \times \text{std},
$$ which we use to plot error bars for the MSE. 

To understand the scale of errors we should expect for MSE in this setting, we note that throughout all simulations, we choose $t^*= \frac{1}{2}$ and $m$ even, thus $\hat{t}_{mc} \in \{\frac{2}{m},\frac{3}{m}, \cdots,\frac{m-1}{m}\}$ (note in Algorithm \ref{alg:2} that the estimated changepoint cannot be a boundary point). As such, an estimator $u_m$ that randomly guesses among $\{\frac{2}{m},\frac{3}{m}, \cdots,\frac{m-1}{m}\}$ should have a chance MSE given by $\EE[ (u_m - \frac{1}{2})^2 ]$, which will be indicated as a vertical dashed red line in all simulations. Error rates above this level suggest a lack of consistency for the corresponding estimator, at least with the given sample size.

\subsection{MSE with different shuffling ratio}\label{Simulation_MSE}
First we study how the proportion of shuffled vertices $\alpha$ and signal strength $|p-q|$ affect the changepoint estimation in both models. 
We apply the $\hat{d}_{MV}$ mirror for the London model and the $\hat{d}^2_{MV}$ mirror for the Atlanta model to the $\alpha$-shuffled TSGs, treating them as if they were aligned. We consider 100 Monte Carlo runs ($nmc=100$) for each pair of parameters, and vary $\alpha$ and $|p-q|$ in each model to observe the effect, shown in Figure:~\ref{fig:MSEvsShuffleRatio}. 

For the Atlanta model in Figure \ref{fig:Atlanta-MSEvsShuffleRatio}, MSE increases dramatically with increasing $\alpha$. For $p=0.4$, $q = 0.1$ or $0.2$, when $\alpha < 15
\%$, the MSE for either model is better than chance, indicating that the $d^2_{MV}$ mirror is robust to a small amount of shuffling. This indicates that there is still a fair amount of signal in the aligned vertices, allowing for recovery in the case of the Atlanta model. When this alignment is not used, which is the case when we use $W_1$ or average degree, the performance is much worse than chance (see Table \ref{table:A}).

However, as soon as $\alpha\geq 15\%$, the signal disappears in the Atlanta model, and all methods perform worse than chance. The London model in Figure \ref{fig:London-MSEvsShufflingRatio} is robust to misalignment and the MSE is better than chance across proportions of shuffled vertices and values of $q\neq p$.

\begin{figure}[htbp]
    \centering
    \begin{subfigure}[b]{0.48\linewidth}
        \centering
        \includegraphics[width=\linewidth]{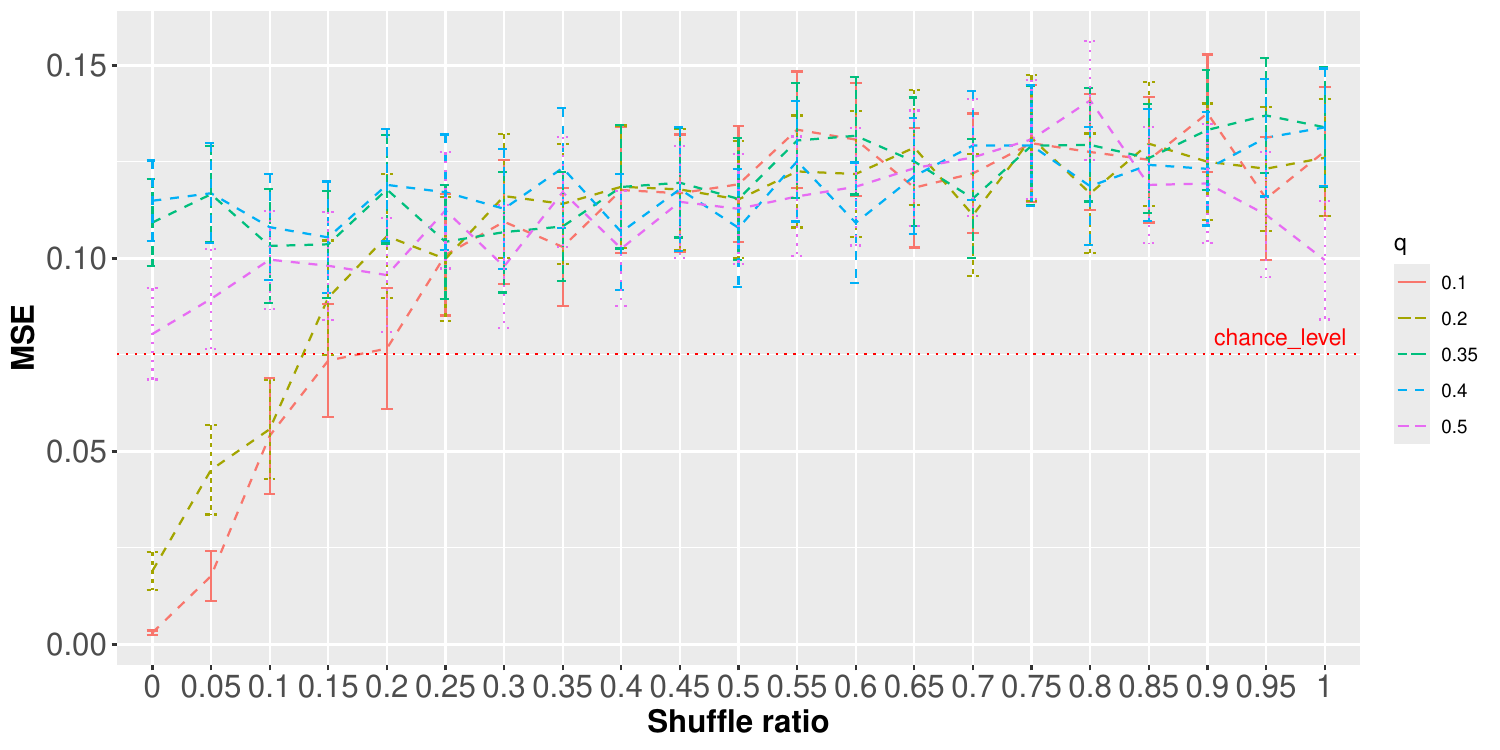}
        \caption{Atlanta model with $p=0.4$ varying $q$ and $\alpha$.}
        \label{fig:Atlanta-MSEvsShuffleRatio}
    \end{subfigure}
    \hfill
    \begin{subfigure}[b]{0.48\linewidth}
        \centering
        \includegraphics[width=\linewidth]{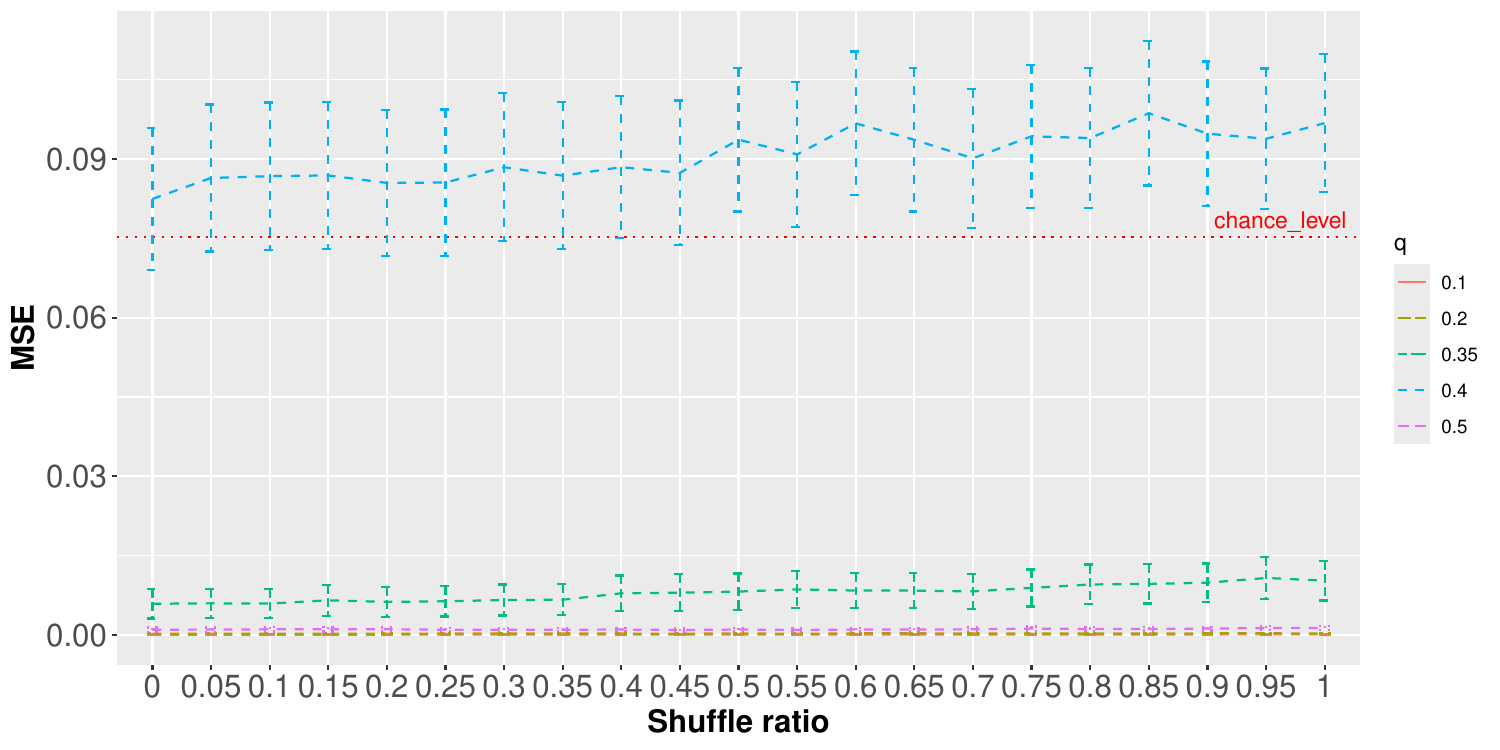}
        \caption{London model with $p=0.4$ varying $q$ and $\alpha$.}
        \label{fig:London-MSEvsShufflingRatio}
    \end{subfigure}\caption{
    MSE versus shuffle ratio \(\alpha\) for two models: (a) Atlanta and (b) London.
    For both models, we fix \(p = 0.4\) with \(n = 300\), \(m = 40\), and \(t^* = 0.5\); in London, $c_L = 0.1, \delta_L = 0.9/40$, in Atlanta, we set \(N = 50\) and $\delta_A = 0.8$.
    Then \(\alpha\) ranges from 0 to 1 simulating increasing extent of vertex misalignment.
    Colored curves correspond to \(q = 0.1, 0.2, 0.35, 0.4, 0.5\), with each point averaged over $ nmc = 100$ and the error bars show 1.96 standard deviation.
    The red dotted horizontal line marks the chance level of $0.075$ for $m = 40$.
    Atlanta shows high sensitivity to vertex misalignment, with MSE rapidly rising above chance once \(\alpha \ge 0.15\).
    In contrast, the London model exhibits robustness to misalignment, with MSE largely unaffected across all shuffle ratios and non-zero signal strengths.
    }
    \label{fig:MSEvsShuffleRatio}
\end{figure}

\subsection{MSE with different distances}\label{sec:Simulations_MSE_Metrics}
In this section we compare the performance of various distances with respect to the MSE for estimating the changepoint in the London and Atlanta models. We consider fully shuffled TSGs, and in addition to the four distances considered above ($\hat{d}_{MV}, \hat{d}_{MV-\alpha\text{-shuffled-TSG}}, W_1,$ and $d_{\text{avg-degree}}$),
we also consider applying graph matching to the shuffled TSG and then apply $\hat{d}_{MV}$ on the graph matched TSG. Let us denote the adjacency matrices of the shuffled TSG as 
$\bA'_{t_1},...,\bA'_{t_m}$, with ASEs 
$\{\hat{\bX}_{t_1}':=P_1\hat{\bX}_{t_1}, 
... , \hat{\bX}_{t_m}':=P_{m}\hat{\bX}_{t_m}\}$.

A naive approach to graph matching simply aligns each pair of adjacency matrices, giving us
$$
\left(\hat{\mathcal{D}}_{\hat{d}_{MV}\text{-pairwise-GM-TSG}}\right)_{k,s} 
= 
\frac{1}{\sqrt{n}} \| \hat{\bX}_{t_k}' - P_{s \to k}\hat{\bX}_{t_s}'\|_F,~~~~ P_{s \to k} = \arg \min_{P} \| \bA'_{t_k} - P \bA'_{t_s} P^{\top} \|_F, ~~~~ \forall k,s \in [m].
$$ 
This requires $O(m^2)$ many graph matchings, which is computationally expensive, so we will use two alternative approaches to reduce this number. 

First we consider an ``all to one" graph matching in which $\bA'_{t_1}$ remains unchanged, but each $\bA'_{t_k}$ is matched to $\bA'_{t_1}$ to obtain $P_{k\to1}$. This gives
\begin{equation}\label{eq:alltoonegm}
\left(\hat{\mathcal{D}}_{\hat{d}_{MV}\text{-all to one-GM-TSG}}\right)_{k,s} 
=  \frac{1}{\sqrt{n}} \| P_{k\to1} \hat{\bX}'_{t_k} - P_{s\to1}\hat{\bX}'_{t_s}\|_F
\end{equation}
where $P_{k\to1} = \arg \min_{P} \| P \bA'_{t_k} P^{\top} -  \bA'_{t_1}\|_F $ for all $k \in [m]$. Note that $P_{1 \to 1} = I$. 

We also consider ``consecutive pair" graph matching, where we match $\bA'_{t_2}$ to the previous adjacency matrix $\bA^{\ast}_{t_1} = \bA'_{t_1}$ with solution $P_{2\to1^{\ast}}$, then match $\bA_{t_3}$ to the matched $\bA^{\ast}_{t_2} := P_{2\to1^{\ast}} \bA'_{t_2} P^{\top}_{2\to1^{\ast}}$ and find the solution $P_{3 \to 2^{\ast}}$, and so on, yielding
$$
\left(\hat{\mathcal{D}}_{\hat{d}_{MV}\text{-consecutive-GM-TSG}}\right)_{k,s}
=  \frac{1}{\sqrt{n}} \| P_{k\to (k-1)^{\ast}} \hat{\bX}'_{t_k} - P_{s\to(s-1)^{\ast}}\hat{\bX}'_{t_s} \|_F.
$$
where $P_{k\to(k-1)^{\ast}} = \arg \min_{P} \| P \bA'_{t_k} P^{\top} -  \bA^{\ast}_{t_{k-1}} \|_F, ~~ \bA^{\ast}_{t_{k-1}} = P_{(k-1) \to (k-2)^{\ast}}\bA'_{t_{k-1}}P^{\top}_{(k-1) \to (k-2)^{\ast}} $ for all $k \in \{2,3,...,m\}$, define $P_{1 \to 0^\ast} := I$. 

\begin{table}[htbp]
\centering
\begin{tabular}{lccc||ccc}
&&\textbf{London}&&&\textbf{London}&\\
\hline
\textbf{Representation} & \textbf{Lower} & \textbf{Mean} & \textbf{Upper} &\textbf{Lower} & \textbf{Mean} & \textbf{Upper} \\
\hline
$d_{MV}$ with true alignment  & 0.0042 & 0.0051 & 0.0061 & 0.0242 & 0.0289 & 0.0335 \\
$d_{MV}$ with 100\% shuffling    & 0.0106 & 0.0125 & 0.0144 & 0.0178 & 0.0214 & 0.0251 \\
$d_{MV}$ with GM all to one       & 0.0091 & 0.0108 & 0.0125 & 0.0187 & 0.0225 & 0.0263 \\
$d_{MV}$ with GM consecutive      & 0.0229 & 0.0265 & 0.0300 & 0.0102 & 0.0129 & 0.0156 \\
$W_1$ 
& 0.0064 & 0.0076 & 0.0088 & 0.0155 & 0.0189 & 0.0223 \\
Average degree
& 0.0032 & 0.0040 & 0.0048 & 0.0028 & 0.0035 & 0.0041 \\
\hline
&\multicolumn{3}{c}{$p=0.4,q=0.3$}&\multicolumn{3}{c}{$p=0.3,q=0.4$}
\end{tabular}
\caption{Comparison of MSEs for different distances using the $l_2$ localizer for $n = 200$, $m = 20$, $nmc = 500$ with different $(p, q)$ settings for the London model. Chance level for $m=20$ is $6.8\times 10^{-2}$. Thus all representations have MSE significantly better than chance, meaning they capture information about $t^*$.}
\label{tab:L}
\end{table}

For numerical solution of graph matching,
we use the ``indefinite" method provided by the ``gm" function in the R package igraphmatch \cite{qiao2021igraphmatch}, which relaxes the problem from the set of permutation matrices to its convex hull, then applies the Frank-Wolfe algorithm to find a local optimum for the relaxed problem. We set the max number of iterations = 100 for all simulations and set no seed vertices. This runs the alignment without any known correspondences, starting from a random-chosen doubly stochastic matrix.

These results are summarized in Table \ref{tab:L} for the London model and Table \ref{table:A} for the Atlanta model. 
For the London model, the MSE is significantly smaller than chance, across all choices of distance. The best performing estimator is given by average degree, which is comparable to $d_{MV}$ with true alignment when $p = 0.4, q = 0.3$, but is significantly better when $p = 0.3, q = 0.4$. This appears to be a result of smaller variance for the average degree compared to other choices of distance, as we saw in Figure \ref{fig:London-mirrors}. In this setting, $d_{MV}$ on consecutive pair graph matching and $W_1$ both result in much smaller MSE compared to even the true alignment. In all three cases, utilizing marginal information brings the estimator closer to a function of the sufficient statistics described in Theorem~\ref{thm:London_MLE}, which should result in decreased variance in light of the Rao-Blackwell theorem. In particular, for the London model, ignoring vertex alignment does not hinder inference about $t^*$, and could actually result in improved estimation. 

On the contrary, for the Atlanta model in either of the $n=500$ or $n=800$ cases, only $\hat{d}^2_{MV}$ on the true alignment has MSE significantly better than chance. Importantly, the MSE for all to one graph matching, consecutive graph matching, or ind-$d_{MV}$ are all comparable, indicating that for the Atlanta model, once the alignment is lost, the information contained within that alignment cannot be recovered. All three methods have MSE worse than chance level, as seen in Figure \ref{fig:Atlanta-MSEvsShuffleRatio}.

\begin{table}[htbp]
\centering
\centering
\begin{tabular}{lccc||ccc}
&&\textbf{Atlanta}&&&\textbf{Atlanta}&\\
\hline
\textbf{Representation} & \textbf{Lower} & \textbf{Mean} & \textbf{Upper} &\textbf{Lower} & \textbf{Mean} & \textbf{Upper} \\
\hline
$d^2_{MV}$ with true alignment  & 0.0163 & 0.0195 & 0.0228 &0.0095 & 0.0112 & 0.0129 \\
$d^2_{MV}$ with 100\% shuffling         & 0.1007 & 0.1082 & 0.1157 &0.0959 & 0.1040 & 0.1120 \\
$d^2_{MV}$ with GM all to one        & 0.0875 & 0.096 & 0.1044 &0.0849 & 0.0929 & 0.1010 \\
$d^2_{MV}$ with GM consecutive       & 0.0904 & 0.0952 & 0.1000 &0.0896 & 0.0941 & 0.0985 \\
$W_1$                                 & 0.0769 & 0.0845 & 0.0920 &0.0533 & 0.0603 & 0.0674 \\
Average degree                          &  0.0690 & 0.0765 & 0.0841 &0.0626 & 0.0697 & 0.0767 \\
\hline
&&$n=500$&&&$n=800$&
\end{tabular}
\caption{Comparison of MSEs for different distances using the $l_2$ localizer for $p = 0.4$, $q = 0.2$, $m = 20$, $N = 50$, and $nmc = 300$ with different $n$ settings for Atlanta model. 
Only the $d^2_{MV}$ dissimilarity with true vertex alignment provides the information to localize the changepoint $t^*$, yielding an MSE substantially below the chance level of $0.0679$. 
All other representations are uninformative and perform no better than chance.}
\label{table:A}
\end{table}

\subsection{Simulated swarm data}\label{sec:Simulations_Swarm}

We consider a time series of graphs derived from the behavior of interacting agents in a swarm, simulated in \cite{hindes2024swarming}. This data is a 1000-seconds-long video consisting of 10,001 sequential frames (1 frame per 0.1 second), representing the movements of 100 agents in a 2D plane with known agent correspondences across all frames.
For simplicity, we focus on a specific subinterval of time, spanning from 780 seconds to 820 seconds, resulting in \( m = 401 \) frames with \( t_1 = 780 \, s \) and \( t_{401} = 820 \, s \).
The video for this time period can be found here \footnote{\url{https://www.cis.jhu.edu/~parky//SofA/NRL-swarm-movie.mp4}}. 

Our goal is to identify the structural changepoint times within this video.
Upon visually reviewing the video, we identify two distinct structural changes: at 800 seconds, the agents form two major groups that gradually merge back into a single larger group; and around 810 seconds, they begin to split into two unequal groups.
To apply our mirror method to detect and localize the changes, we first convert the video into a time series of graphs.

For each frame \( t \), we observe the locations in \( \RR^2 \) of \( n = 100 \) agents.
We use the locations as the latent positions for each node, represented as a matrix \( \mathbf{X}_t \in \RR^{100 \times 2} \) by stacking the locations row-wise.
Next, we generate a RDPG using \( \mathbf{X}_t \) as latent position matrix, thresholding the \( \mathbf{P}_t = \mathbf{X}_t \mathbf{X}^{\top}_t \) matrix such that entries below 0 are set to 0 and entries above 1 are set to 1.
This process yields a time series of undirected graphs without self-loops, characterized by \( n = 100 \) nodes and \( m = 401 \) time points.
We can then implement our \( d_{MV} \)-based mirror method.
In practice, when the mirror exhibits a manifold structure, we can further reduce the dimension of the mirror using Isometric Mapping (Isomap) \cite{isomap_science} on the results obtained from CMDS, which results in more straightforward changepoint estimation.
This results in an \emph{iso-mirror}, the algorithm for which is summarized in Algorithm~\ref{alg:iso-mirror}. We note that in step 3 of the algorithm, to approximate the minimizing rotation matrix $W\in \mathcal{O}^{d_{\text{ASE}}\times d_{\text{ASE}}}$ for $\|\hat{\bX}_t-\hat{\bX}_s W\|_2$, we instead use the closed-form solution to the corresponding problem with the Frobenius norm instead of the spectral norm. This approach can be justified by the bounds in \cite{cape2019two} or computational comparisons provided in \cite{jasa2025procrustes}.
\begin{algorithm}
\caption{Iso-mirror estimation}
\label{alg:iso-mirror}
\begin{algorithmic}[1]
\State Input: TSG $\{\bA_1,\bA_2,\cdots,\bA_m\}$, ASE dimension $d_{\text{ASE}}$, CMDS dimension $d$.
\State Compute $\hat{X}_t=$ASE$(A_t)\in \RR^{n\times d_{\text{ASE}}}, 1\leq t\leq m$.
\State {Construct the distance matrix $\hat{\mathcal{D}}\in\R^{m\times m}$ where $\hat{\mathcal{D}}_{t,s}=\frac{1}{\sqrt{n}}\|\hat{\bX}_{t}-\hat{\bX}_{s}W_F\|_2$,} where $W_F=\arg \min_{W\in\mathcal{O}^{d_{\text{ASE}}\times d_{\text{ASE}}  }} \|\hat{\bX}_{t}-\hat{\bX}_{s}W\|_F$.
\State Compute CMDS$(\hat{\mathcal{D}})=\{\hat{\psi}(1),\hat{\psi}(2),\cdots,\hat{\psi}(m)\}$, with each $\hat{\psi}(t) \in \R^d$. 
\State Apply Isomap on $\{\hat{\psi}(1),\hat{\psi}(2),\cdots,\hat{\psi}(m)\}$ using $k$ nearest neighbors, where $k$ is the smallest value such that the $k$-nearest-neighbor graph is connected, obtaining $\{\hat{\psi}_{d\to1}(1),\hat{\psi}_{d\to1}(2),\cdots,\hat{\psi}_{d\to1}(m)\}$, with each $\hat{\psi}_{d\to 1}(t)\in \RR$.
\State Output: $\{\hat{\psi}_{d\to1}(1),\hat{\psi}_{d\to1}(2),\cdots,\hat{\psi}_{d\to1}(m)\}$.
\end{algorithmic}
\end{algorithm}

For this video-induced TSG, first we apply the iso-mirror Algorithm \ref{alg:iso-mirror} on the true aligned TSG with $d_{\text{ASE}} =2$ and CMDS embedding dimension $d=2$. 
Second, to investigate the impact of not knowing the true alignment, we randomly shuffle 100\% of the vertices across all time points and then apply the iso-mirror algorithm to the shuffled TSG with $d_{ASE} =2$ and $d =2$.
Finally, we perform the all to one graph matching with max number of iterations $200$ as described in Equation~\ref{eq:alltoonegm}, followed by applying the iso-mirror algorithm to the graph-matched TSG with $d_{ASE} =2$ and $d =2$.
We also consider some metrics that do not make use of alignment information, including the Wasserstein  distances $W_1$ and $W_2$, and the average degree distance.
For the Wasserstein distances, we chose $d_{ASE} = 2$. 
As in Algorithm~\ref{alg:iso-mirror}, for any pair $s,t\in[m]$, we first Procrustes align $\hat{\bX}_s$ to $\hat{\bX}_t$ with respect to Frobenius norm, then calculate $W_p\left(\hat{\mu}_{\hat{\bX}_{t}} , \hat{\mu}_{\hat{\bX}_{s}}\right)$ on the appropriately-rotated matrices of latent positions to obtain the $s,t$ entry of the distance matrix.
We choose CMDS dimension $d=1$ for both of the Wasserstein distance matrices.

To allow for the simultaneous detection and localization of first-order changepoints on the final 1-dimensional iso-mirrors above, we utilize the function ``selgmented" in R package Segmented \cite{muggeo2017interval,segmented} to perform segmentation regression.
This approach fits a piecewise linear function onto the  representations while accommodating an unknown number of slope changes.
We set the max number of slope changes to be $20$ and select the optimal number of changepoints based on the Bayesian Information Criterion (BIC).
\begin{figure}
    \centering
    \includegraphics[width=0.9\linewidth]{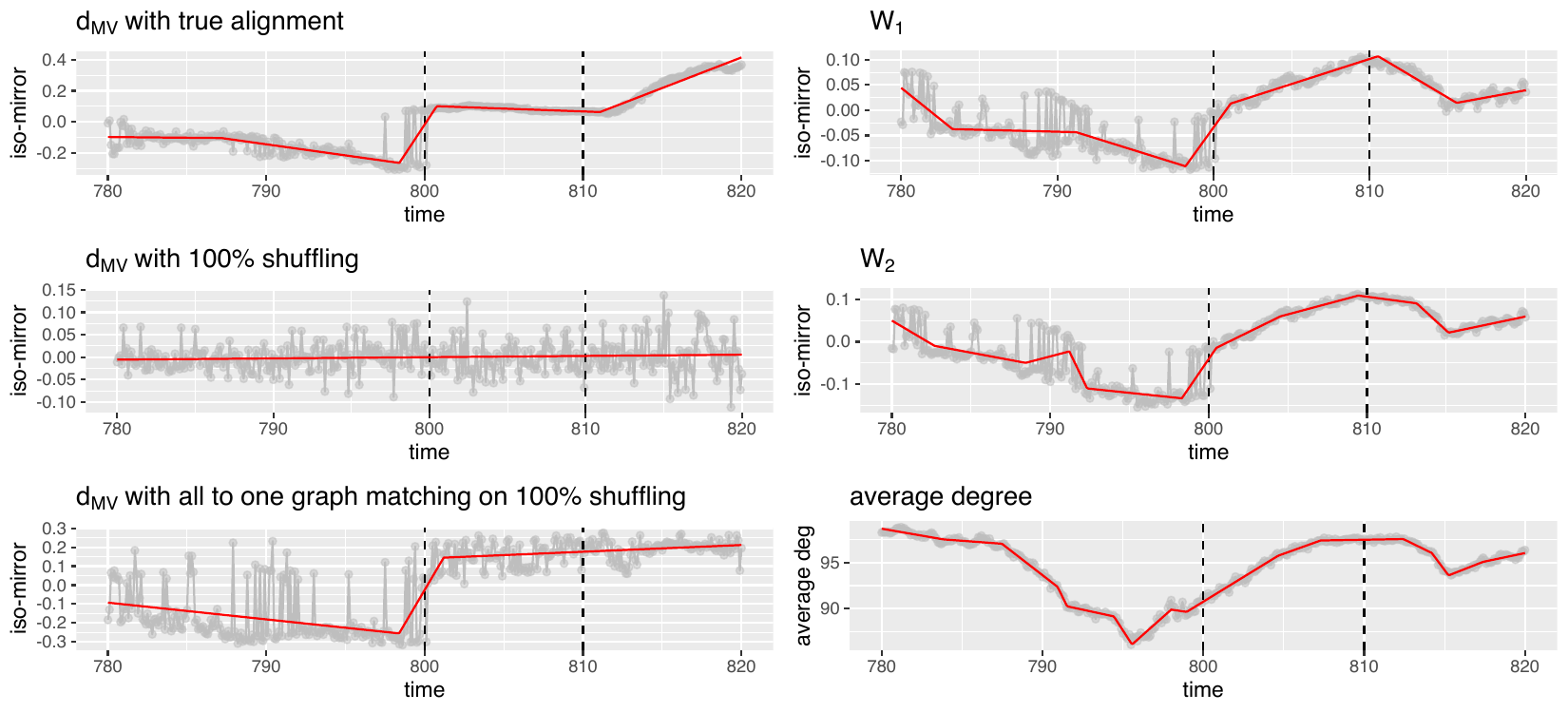}
    \caption{Gray dots represent the iso-mirror embeddings for the time series of graphs derived from simulated swarm data between 780 seconds and 820 seconds. Black dashed lines indicate two structural changes detected by eye at 800s and 810s. The red lines are the segmentation regression fits for the estimated iso-mirrors. The panels in the left column, arranged from top to bottom, display the iso-mirror results for the true aligned TSG, the TSG with 100\% vertex shuffling, and the all to one graph-matched TSG with max number of iterations $200$. The right column shows the corresponding results for the $W_1,$ $W_2$, and average degree distances. While shuffling completely disrupts the original dynamics, graph matching partially recovers them. }
    \label{fig:NRL}
\end{figure}

These results are shown in Figure \ref{fig:NRL}. We see that the iso-mirror under the true alignment effectively captures the dynamics of the swarm movement. Segment regression detects four slope changes: $787(1.1), 798(0.1),801(0.1), 811(0.3)$, with 3 occurring around time points $800$ and 810—closely aligning with the changepoints identified by visual inspection. 
In contrast, for the shuffled TSG, the iso-mirror becomes zigzagged and uninformative, and no slope changes are detected: the segment regression yields a flat line.
This indicates that the data is sensitive to lost of true correspondence.
After applying all to one graph matching, segmentation regression once again detects two slope changes $798(0.2)$ and $801(0.2)$, but it fails to recover the change around 810. 
When ignoring alignment completely, we see using the mirror induced by $W_1$ or $W_2$ detects more slope changes (6 and 10 respectively) than $d_{MV}$ and its variants, and both detect changes around 800 and 810.
This indicates that the structural changes happening around times 800 and 810 are encoded in the marginal distributions of the TSG, in a way that the $W_p$ distances are more sensitive to than the $d_{MV}$ distance, even with vertex alignment. It is unclear whether these additional detected changepoints are spurious, or just subtler than the ones detected by eye. Finally, using average degree as a simple summary statistic detects 14 slope changes, the most among all methods. This quantity seems to be smoothly varying over time, rather than exhibiting piecewise-linear structure, so the current changepoint detection method may not be well-suited for this input.

\section{Conclusion and discussion}\label{sec:Conclusion}

This paper investigates the impact of vertex misalignment for changepoint detection in time series of networks. We study the effect of this shuffling on the joint distributions of the latent positions between times, as well as methods for comparing the latent position distributions at different times using only marginal information. We propose two tractable models for time series of networks that represent the extreme ends of a spectrum of sensitivity to the vertex alignment: in the London model, marginal information is sufficient, so losing the vertex correspondence does not degrade inferential power. On the other hand, in the Atlanta model, once the vertex alignment is lost, all information about the changepoint is gone, and cannot be recovered through graph matching or marginal-based distances. As we demonstrate in Section~\ref{sec:Simulations_Swarm}, more complicated data-generating processes can lead to phenomena somewhere between these two extremes, where certain changepoints may be detectable through marginal information, or can be recovered through graph matching, while others may be lost without known vertex correspondence. 

Besides serving as examples of the most extreme phenomena for recovery of changepoints with and without vertex alignment, the tractability of the London and Atlanta models lead to new insights about distances between networks and their use with the Euclidean mirror methodology, since these mirrors are explicitly computable. For example, we find that in the London model, $d_{MV}$-based distances look approximately Euclidean 1-realizable regardless of the amount of vertex shuffling, but the Wasserstein metric $W_1$ and the average degree metric are both exactly 1-realizable. Additional phenomena appear for the Atlanta model, where CMDS applied to the matrix of \emph{squared} $d_{MV}$ distances leads to approximate 1-realizability for a large number of states $N$. In the fully shuffled case, the mirror in the Atlanta model becomes completely degenerate. These phenomena were all previously unobserved for Euclidean mirrors generated from LPPTSGs before the present work, and provide a more complete picture of what is captured by different notions of distance on latent position random variables.

Our takeaway for practitioners is as follows.
If the signal of interest can be found in the marginal distributions, as in the London model, it may be worth considering Wasserstein-based metrics for inference, and the vertex alignment may be non-essential. When the signal is believed to be in the joint distribution, as in the Atlanta model, the alignment can be crucial for accurate recovery. 
The standard $d_{MV}$ mirror is a robust option if a known alignment is available with only a small amount of error.
However, when the vertex alignment is completely unknown, practitioners should use $d_{MV}$ on graph-matched data with caution, since this approach may only capture marginal information (akin to a $W_1$ mirror), while appearing to capture information about the joint distribution. Because it is often unclear in practice whether a signal lies in the marginal or joint, we recommend that practitioners explore a variety of dissimilarity-induced mirrors to ensure a comprehensive analysis. It may also be worth exploring the effect of shuffling a small portion of the vertices to see how sensitive the results are to an errorful vertex alignment.

Some limitations of the current paper include Theorem~\ref{thm:independentdMV_convergence}, which imposes the restrictive assumption of a positive scalar LPP. Second, a principled method for determining whether signal resides in the joint or marginal distributions would make these results more practical for real data. 

As an initial step, we propose computing the ratio $\frac{d_{MV}}{\text{ind-}d_{MV}}$ for all pairs of time points. When this is consistently close to 1, it suggests that shuffling has a minimal impact on the mirror. Larger deviations may indicate sensitivity to shuffling.

Third, the current London and Atlanta models represent two edge cases: one contains signals only in the marginal distributions, while the other has signals only in the joint distribution.
This raises the question of whether we can construct intermediate cases, where the signal is contained in both marginal and joint information, and finding appropriate distances for measuring the dynamics in such cases. For example, neither of the current models can reproduce the phenomena observed in the simulated swarm data example in Section~\ref{sec:Simulations_Swarm}, where the signal is lost after shuffling but is partially recovered after graph matching.
This amounts to finding a LPP whose $W_1$ mirror is partially informative while its ind-$d_{MV}$ is uninformative.

Finally, the use of the ind-$d_{MV}$ and Wasserstein distances motivates us to consider alternative dissimilarities for inducing mirrors.
This raises a key question: how does one choose a dissimilarity in a principled way to best extract information about the dynamics within a class of LPPs? Formally defining this question and solving it represents an interesting and important topic for future study.

\section*{Acknowledgements}

Tianyi Chen was supported by The Acheson J. Duncan Fund for the Advancement of Research in Statistics. Tianyi Chen and Carey E Priebe were supported by Office of Naval Research (ONR) Science of Autonomy award number N00014-24-1-2278. Carey E Priebe was supported by the Office of Naval Research (ONR) Applied Physics Lab (APL) Grant 151530. Vince Lyzinski was supported by Air Force Office of Scientific Research (AFOSR) Complex Networks award number FA9550-25-1-0128 and The Johns Hopkins University HLT COE.

\section{Appendix}

\begin{figure}[htp]
  \centering
  \includegraphics[height=4.5cm]{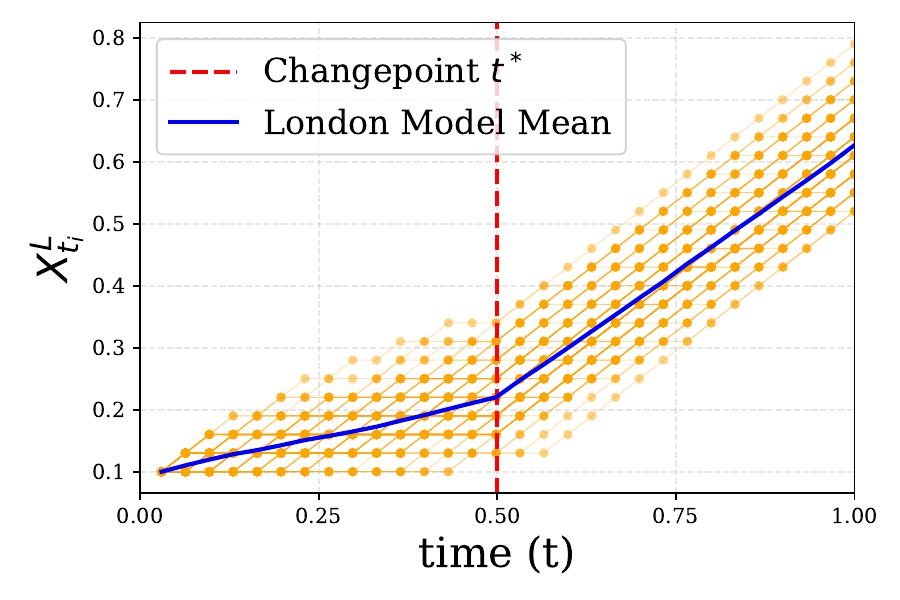}%
  \hspace{-7mm} 
  \caption{Sample paths from the London model with $p = 0.3$, $q = 0.9$, $m = 30$ and $t^* = 0.5$, $c_L = 0.1$, $\delta_m = 0.9/30$. Each state is represented by a dot and states are connected for each path. After the changepoint at $t^{*}=0.5$, the density of connections in the adjacency matrices increase more rapidly, indicating a structural shift both in sample paths and network connectivity.}
  \label{fig:L-illustration}
\end{figure}

\begin{figure}[htp]
  \centering
  \includegraphics[height=4.5cm]{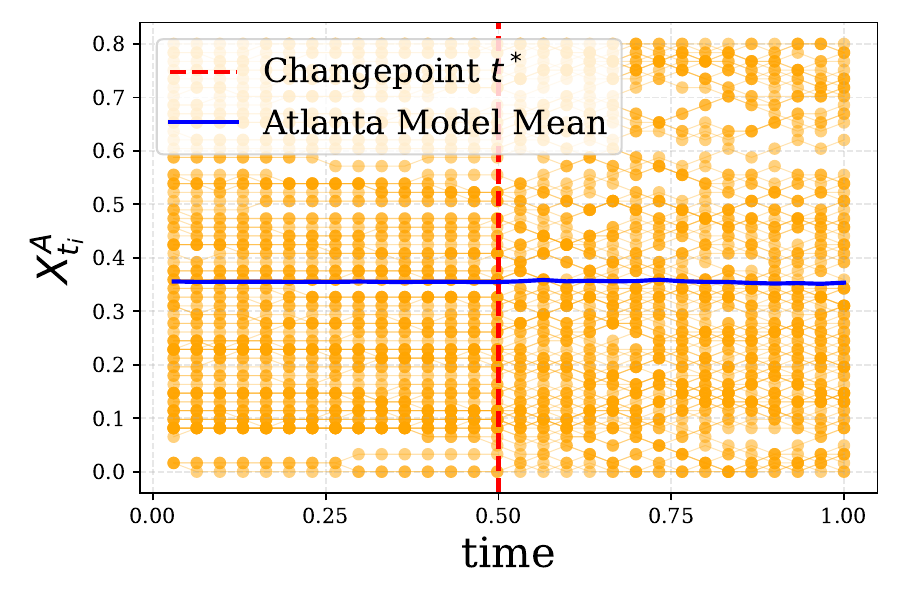}
  \hspace{-7mm}
  \caption{Sample paths for the Atlanta model with $p = 0.05$, $q = 0.45$, $m = 30$ and $t^* = 0.5$, $c_A = 0.8$, $N = 50$, $\delta_N=\frac{0.8}{49}$. Each state is represented by a dot and states are connected for each path. After the changepoint at $t^{*}=0.5$, the sample paths show increased oscillation because the jump probability increases from $0.05$ to $0.45$. However, the density of connections in the corresponding adjacency matrices remains stable over time.}
 \label{fig:A-illustration}
\end{figure}

\subsection{Additional simulation results}

First we compare 
$\psi^A_{\alpha-d_{MV}}$ and 
$\psi^A_{\alpha-d^2_{MV}}$.
Note as shown in Theorem \ref{thm:Atlanta-main},
$\psi^A_{\alpha-d_{MV}}$ is a scaling of $ \psi^A_{d_{MV}}$. 
But it is no longer the case for $\psi^A_{\alpha-d^2_{MV}}$:
$$
\mathcal{D^A}_{\alpha-d^2_{MV}} 
=\mathcal{D^A}_{\alpha-d_{MV}}^{(2)} 
=   \alpha \mathcal{D^A}_{\text{ind-}d_{MV}}^{(2)} +(1-\alpha) \mathcal{D^A}_{d_{MV}}^{(2)}.
$$
Recall CMDS on $\mathcal{D^A}_{\alpha-d^2_{MV}}$  amounts to considering 
\begin{align*}
\mathcal{D^A}^{(2)}_{\alpha-d^2_{MV}} 
&=  
\left(\alpha \mathcal{D^A}_{\text{ind-}d_{MV}}^{(2)} +(1-\alpha) \mathcal{D^A}_{d_{MV}}^{(2)}
\right)^2\\
&= \alpha \mathcal{D^A}_{\text{ind-}d^2_{MV}}^{(2)} + (1-\alpha) \mathcal{D^A}_{d^2_{MV}}^{(2)} + 2\alpha(1-\alpha) \mathcal{D^A}_{\text{ind-}d_{MV}}^{(2)} \odot \mathcal{D^A}_{d_{MV}}^{(2)}, 
\end{align*}
where $\odot$ denote the entrywise product of two matrices. 
Because of the third term $\mathcal{D^A}_{\text{ind-}d_{MV}}^{(2)} \odot \mathcal{D^A}_{d_{MV}}^{(2)}$, $\psi^A_{\alpha-d^2_{MV}}$ is no longer a scaling of $\psi^A_{d^2_{MV}}$, $\psi^A_{d_{MV}}$ or $\psi^A_{\alpha-d_{MV}}$. 
However, as shown in Figure \ref{fig:atlanta-iso-mirror_0.2}, they are similar to each other.

Then we compare the iso-mirror result as in Algorithm \ref{alg:iso-mirror} with $d_{MV}$, not $d^2_{MV}$. 
We choose CMDS embedding dimension $d$ to be 3, 5, 8, and 10. 
Interestingly, we can see that as $d$ gets larger, the iso-mirror appears more piecewise linear, but when $d$ is too high, the mirror becomes highly variable and exhibits false changepoints. 
Meanwhile, if we focus on the estimated iso mirror (the blue dots), we see using $d=3$ or $d = 5$ adds more noise.
Higher shuffling percentage adds more variance to the blue dots while the true mirror remains unchanged. 
To summarize, the iso-mirror successfully represents the dynamics when there is no noise, but when $n$ is finite, there is a bias-variance tradeoff, and the optimal choice of CMDS dimension prior to applying ISOMAP may be greater than 1.

\begin{figure}
    \centering
    \includegraphics[width=0.8\linewidth]{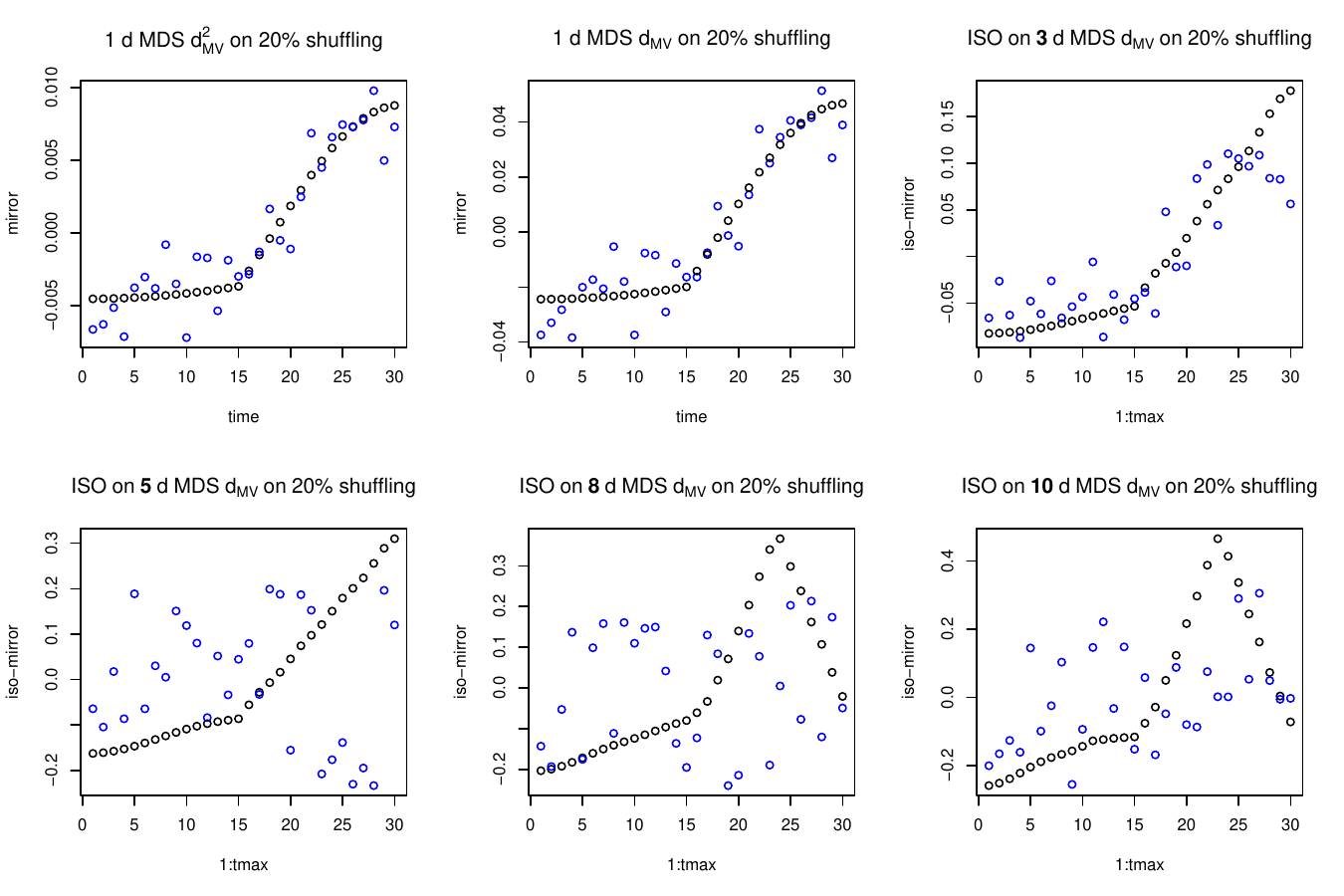}
    \caption{The mirror and iso-mirror(with CMDS embedding dimension $d$ being 3 ,5, 8, and 10) with different CMDS embedding dimension
    based on the same realized 20\% shuffled-TSG from Atlanta model in Figure \ref{fig:Atlanta-mirrors} with $n = 1000$, $p = 0.05$, $q = 0.45$, $m = 30$ and $t^* = 0.5$, $c_A = 0.8$, $N = 50$.
    }
    \label{fig:atlanta-iso-mirror_0.2}
\end{figure}

\begin{figure}
    \centering
    \includegraphics[width=0.8\linewidth]{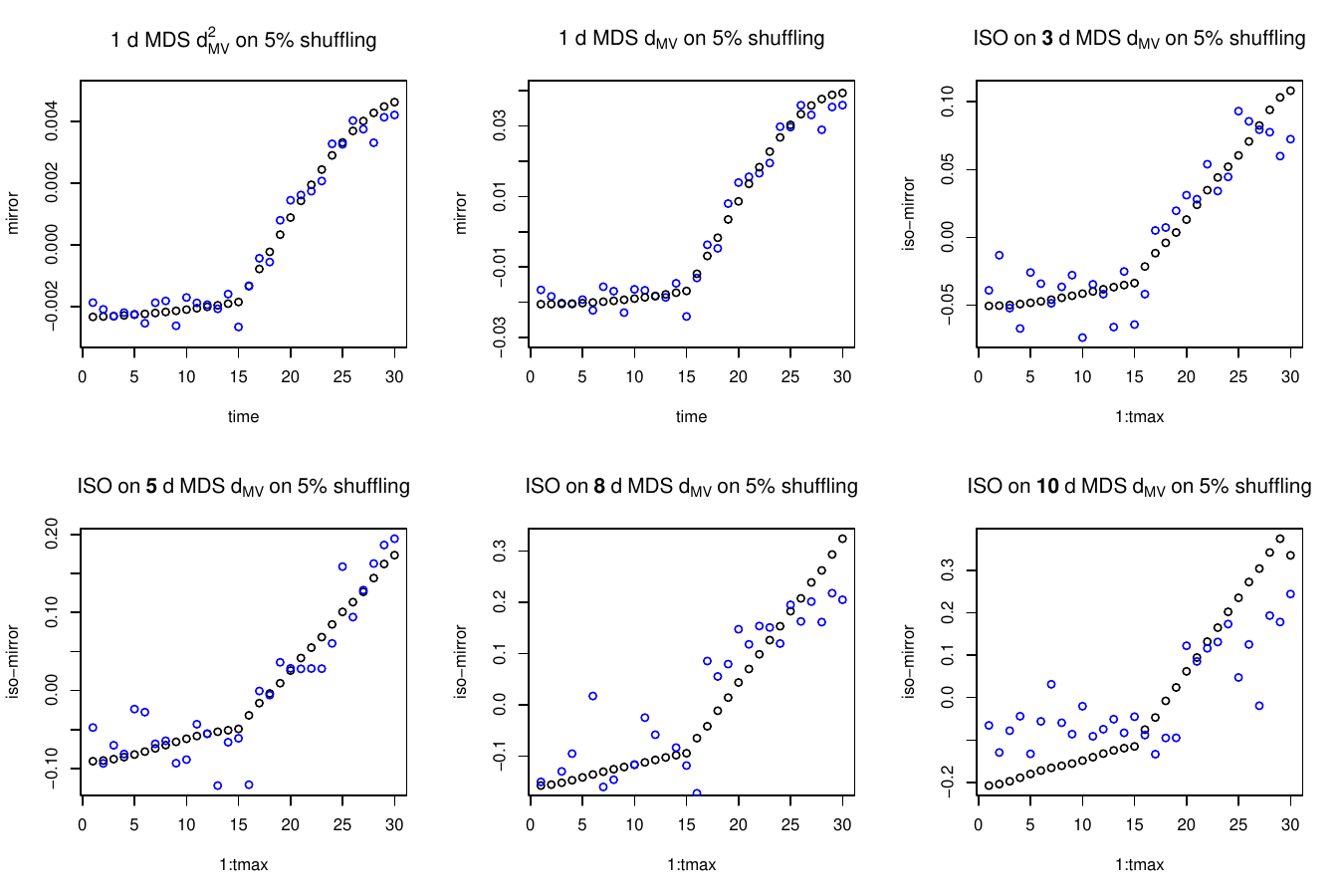}
    \caption{he mirror and iso-mirror with different CMDS embedding dimension based on the same realized 5\%-shuffled-TSG from Atlanta model in Figure \ref{fig:Atlanta-mirrors} with $n = 1000$, $p = 0.05$, $q = 0.45$, $m = 30$ and $t^* = 0.5$, $c_A = 0.8$, $N = 50$. }
    \label{fig:atlanta-iso-mirror_0.05}
\end{figure}


\begin{figure}[htbp]
  \centering

  \begin{subfigure}[b]{0.48\textwidth}
    \includegraphics[width=\textwidth]{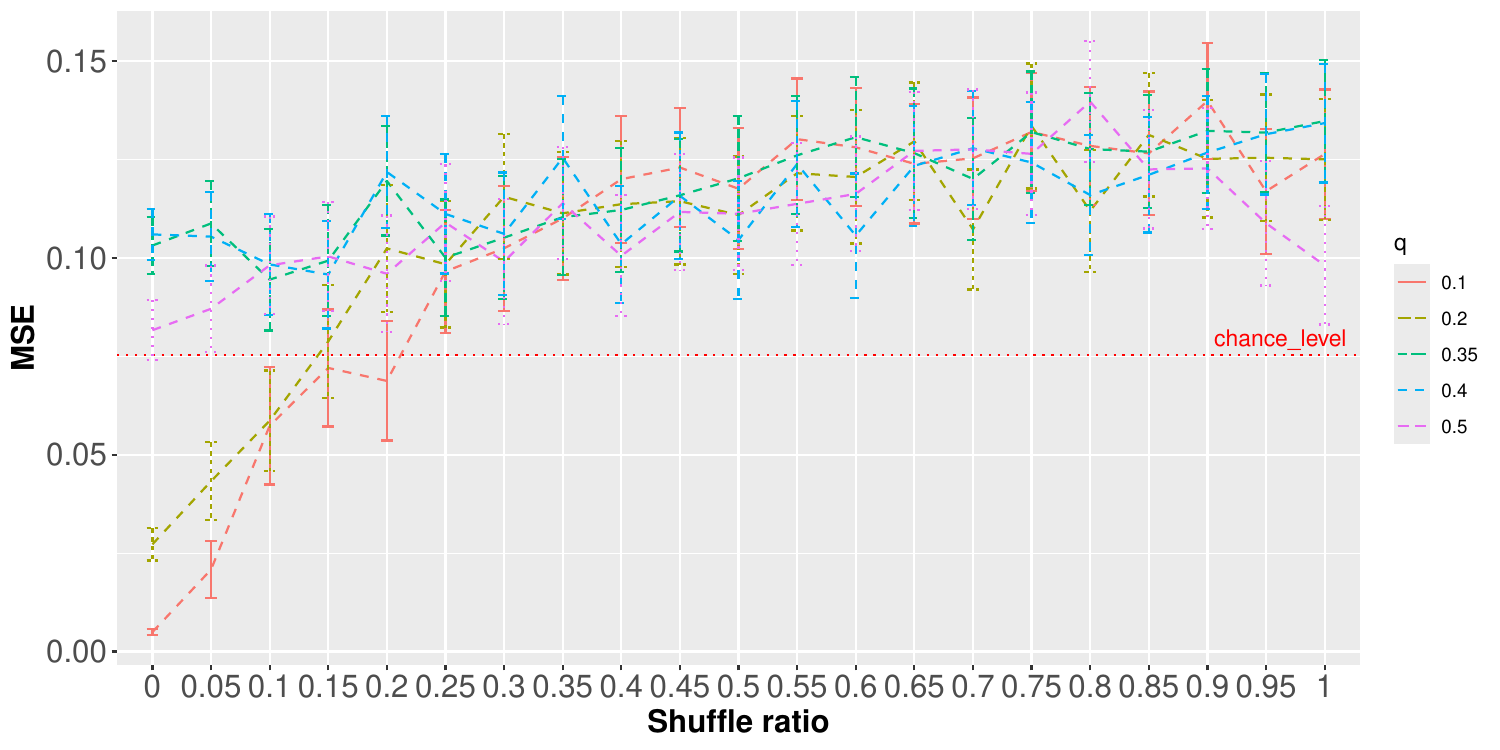}
    \caption{$l_2$ localizer on $d_{MV}$ mirror}
  \end{subfigure}
  \hfill\begin{subfigure}[b]{0.48\textwidth}
    \includegraphics[width=\textwidth]{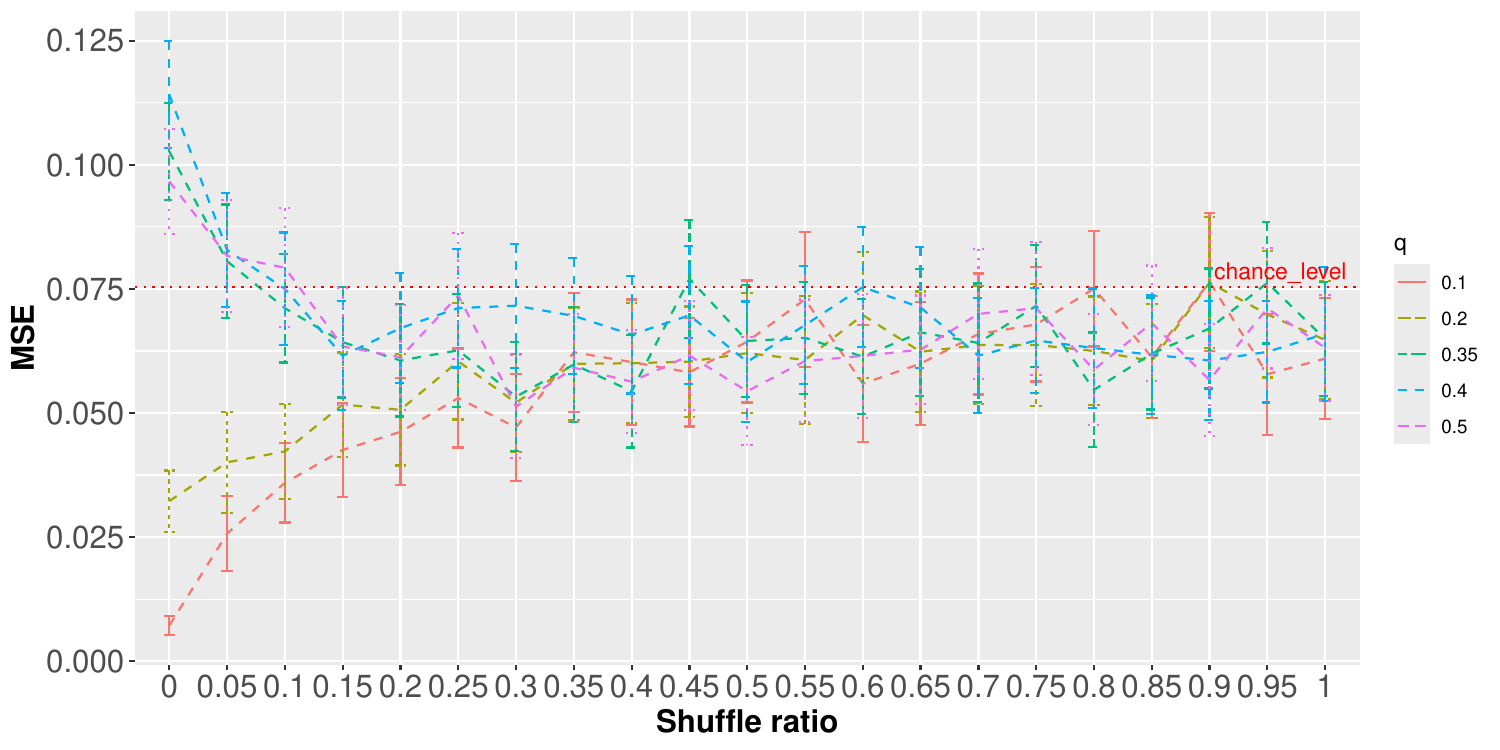}
    \caption{$l_\infty$ localizer on $d_{MV}$ mirror}
  \end{subfigure}
  
  \vspace{1em}

  \begin{subfigure}[b]{0.48\textwidth}
    \includegraphics[width=\textwidth]{figures/Simulation_results/Atlanta_D2_l2.pdf}
    \caption{$l_2$ localizer on $d_{MV}^2$ mirror}
  \end{subfigure}
  \hfill
  \begin{subfigure}[b]{0.48\textwidth}
    \includegraphics[width=\textwidth]{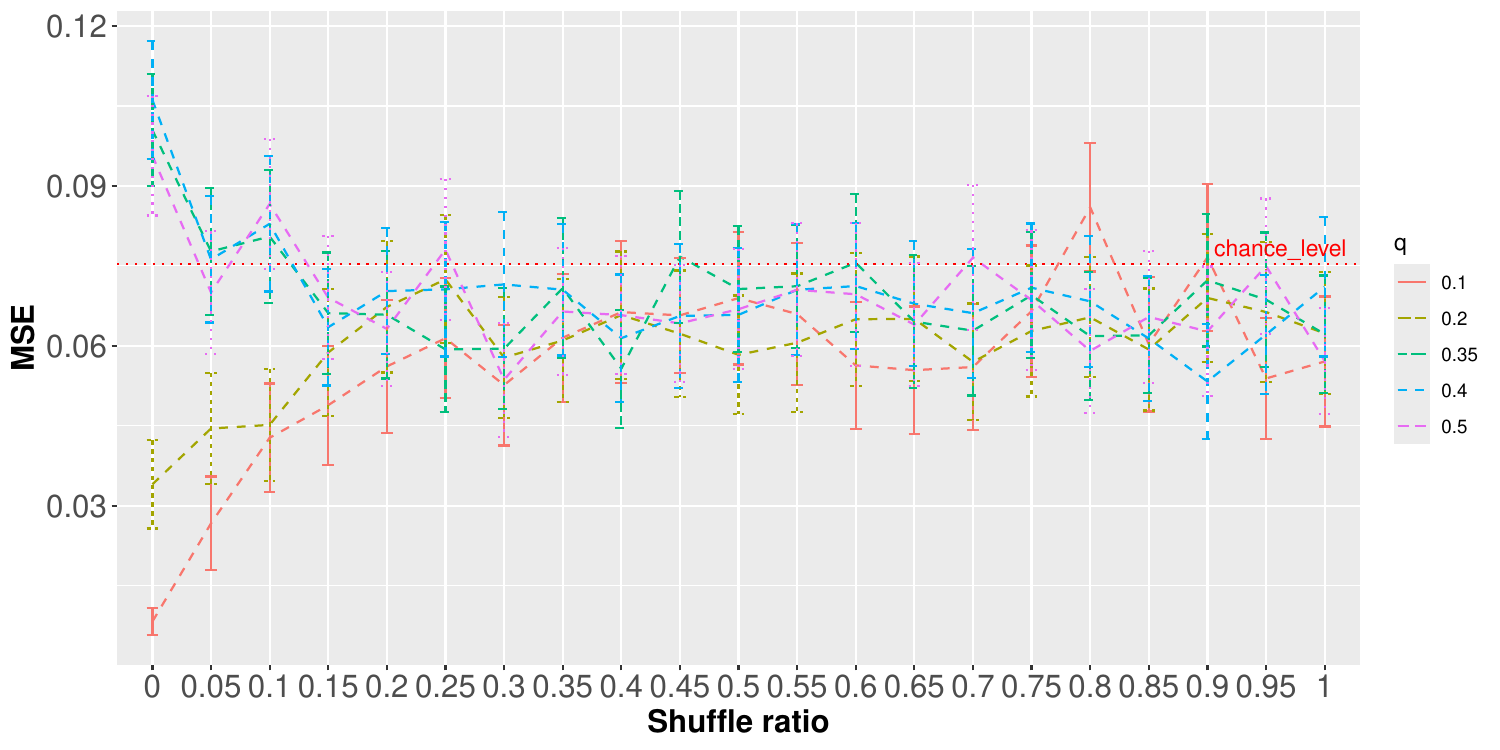}
    \caption{$l_\infty$ localizer on $d_{MV}^2$ mirror}
  \end{subfigure}

  \vspace{1em}
  \begin{subfigure}[b]{0.48\textwidth}
    \includegraphics[width=\textwidth]{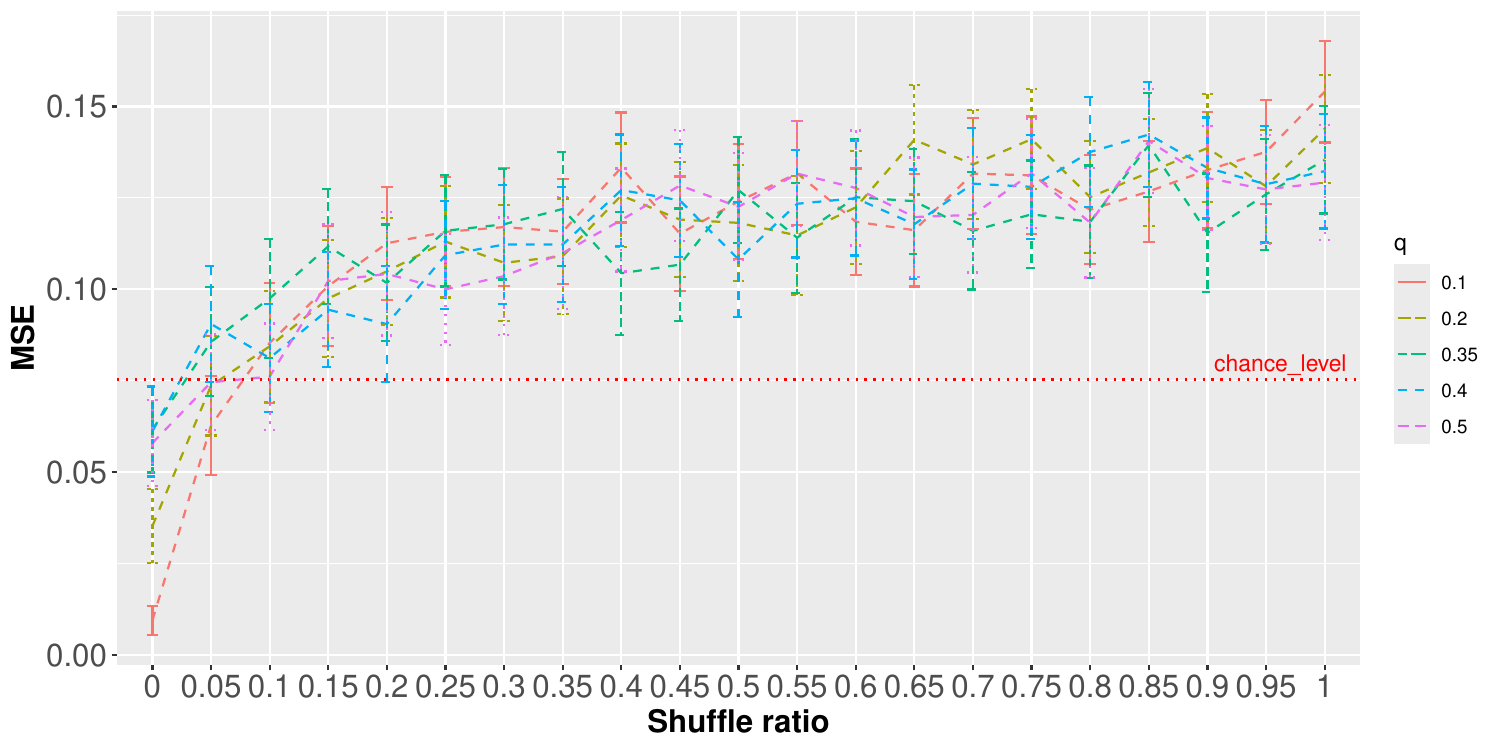}
    \caption{$l_2$ localizer on iso mirror with $d = 10$}
  \end{subfigure}
  \hfill
  \begin{subfigure}[b]{0.48\textwidth}
    \includegraphics[width=\textwidth]{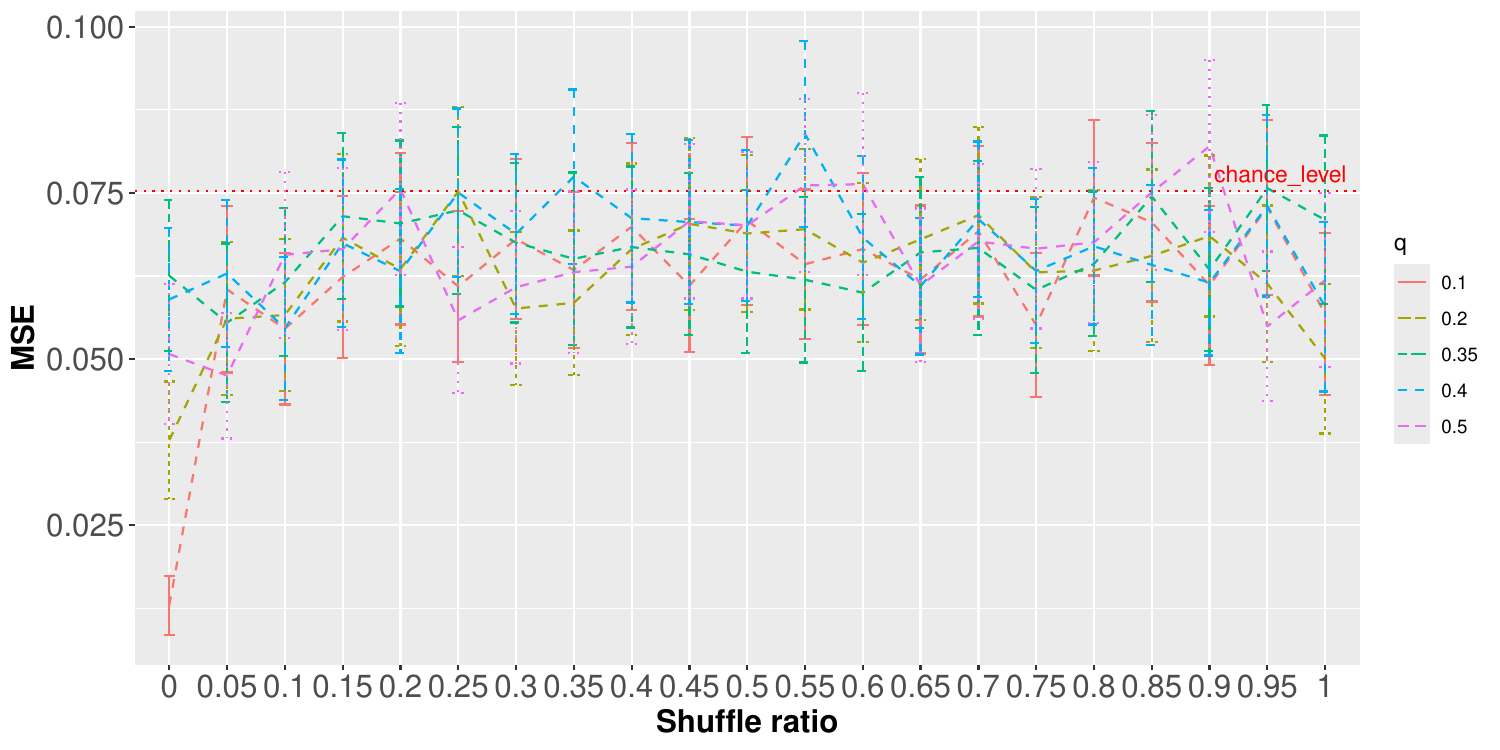}    
    \caption{$l_\infty$ localizer on iso mirror with $d = 10$}
  \end{subfigure}

  \caption{MSE versus shuffle ratio $\alpha$ under the Atlanta model for various combinations of mirror distances and localization methods. Each panel shows results using either the $d_{MV}$, $d_{MV}^2$, or iso mirror (with CMDS embedding dimension $d = 10$), paired with an $l_2$ or $l_\infty$ localizer. For fixed  $p = 0.4$ and each $q \in \{0.1, 0.2, 0.35, 0.4, 0.5\}$, results are averaged over $nmc = 100$ Monte Carlo replicates. Vertical bars denote 1.96 standard deviations, and the red dotted line marks the chance MSE level of 0.075 for $m = 40$. Across all configurations, none of the methods successfully detect the true changepoint location under increasing vertex misalignment. MSE under the $l_2$ localizer often lies above the chance level, primarily due to bias toward boundary estimates, as the procedure tends to overfit noise near the endpoints of the time series.
  }
  \label{fig:atlanta_mse_vs_shuffle_appendix}
\end{figure}

\begin{figure}[htbp]
  \centering

  \begin{subfigure}[b]{0.48\textwidth}
    \includegraphics[width=\textwidth]{figures/Simulation_results/London_l2.pdf}
    \caption{$l_2$ localizer on $d_{MV}$ mirror}
  \end{subfigure}
  \hfill\begin{subfigure}[b]{0.48\textwidth}
    \includegraphics[width=\textwidth]{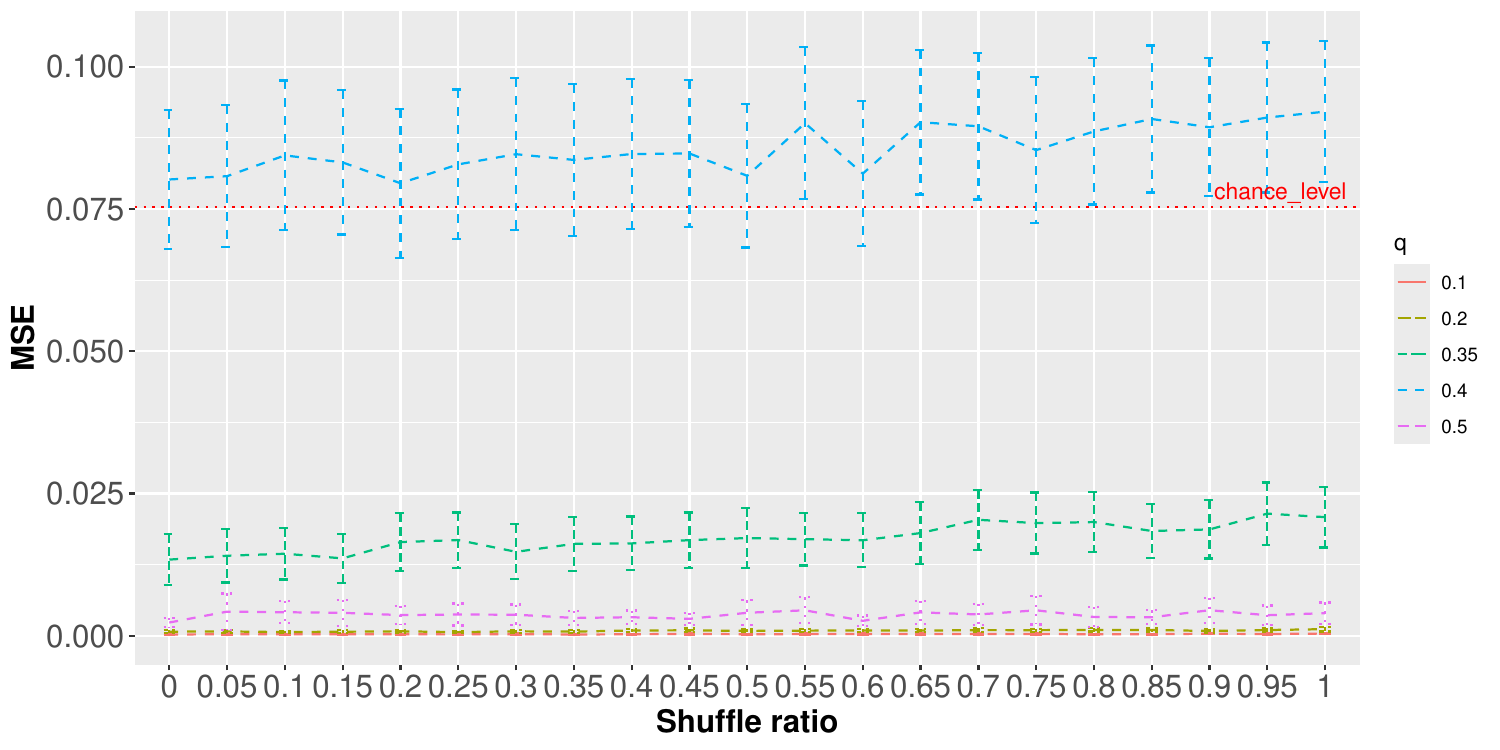}
    \caption{$l_\infty$ localizer on $d_{MV}$ mirror}
  \end{subfigure}

  \caption{
    MSE versus proportion of shuffled vertices $\alpha$ under the London model for two variants of the mirror localization procedure. The plots show performance using the $d_{MV}$ mirror combined with either an $l_2$ or $l_\infty$ localizer. For $p = 0.4$ and each $q \in \{0.1, 0.2, 0.35, 0.4, 0.5\}$, results are averaged over $nmc = 100$ Monte Carlo replicates. Error bars represent 1.96 standard deviations, and the red dotted line denotes the chance MSE level of 0.075 for $m = 40$. Across all shuffle ratios, the London model demonstrates robust changepoint recovery, with MSE consistently below chance and stable across increasing levels of vertex misalignment.
    Both localization methods exhibit strong performance, and no significant difference is observed between the $l_2$ and $l_\infty$ localizers.}
  \label{fig:london_mse_vs_shuffle_appendix}
\end{figure}

\begin{sidewaystable}[htbp]
\centering
\begin{tabular}{cc}
\begin{subtable}[t]{0.48\textwidth}
\centering
\begin{tabular}{lccc}
\hline
\textbf{Metric} & \textbf{Lower} & \textbf{Mean} & \textbf{Upper} \\
\hline
$d_{MV}$ with true alignment & 0.0333 & 0.0402 & 0.0471 \\
$d_{MV}$ with shuffled        & 0.1051 & 0.1128 & 0.1206 \\
$d_{MV}$ with GM all to one       & 0.0857 & 0.0940 & 0.1024 \\
$d_{MV}$ with GM consecutive      & 0.1447 & 0.1513 & 0.1578 \\
\hline
\end{tabular} 
\caption{c = 4, $n = 500$}
\end{subtable}
&
\begin{subtable}[t]{0.48\textwidth}
\centering
\begin{tabular}{lccc}
\hline
\textbf{Metric} & \textbf{Lower} & \textbf{Mean} & \textbf{Upper} \\
\hline
$d_{MV}$ with true alignment & 0.0087 & 0.0123 & 0.0159 \\
$d_{MV}$ with shuffled        & 0.1043 & 0.1121 & 0.1200 \\
$d_{MV}$ with GM all to one       & 0.0941 & 0.1021 & 0.1101 \\
$d_{MV}$ with GM consecutive      & 0.1558 & 0.1615 & 0.1672 \\
\hline
\end{tabular}
\caption{c = 4, $n = 800$}
\end{subtable}
\\[12ex]
\begin{subtable}[t]{0.48\textwidth}
\centering
\begin{tabular}{lccc}
\hline
\textbf{Metric} & \textbf{Lower} & \textbf{Mean} & \textbf{Upper} \\
\hline
$d_{MV}$ with true alignment & 0.0326 & 0.0389 & 0.0452 \\
$d_{MV}$ with shuffled        & 0.0999 & 0.1074 & 0.1150 \\
$d_{MV}$ with GM all to one       & 0.0790 & 0.0873 & 0.0955 \\
$d_{MV}$ with GM consecutive      & 0.0647 & 0.0725 & 0.0803 \\
\hline
\end{tabular}
\caption{c = 8, $n = 500$}
\end{subtable}
&
\begin{subtable}[t]{0.48\textwidth}
\centering
\begin{tabular}{lccc}
\hline
\textbf{Metric} & \textbf{Lower} & \textbf{Mean} & \textbf{Upper} \\
\hline
$d_{MV}$ with true alignment & 0.0285 & 0.0346 & 0.0407 \\
$d_{MV}$ with shuffled        & 0.0984 & 0.1063 & 0.1142 \\
$d_{MV}$ with GM all to one       & 0.0824 & 0.0906 & 0.0988 \\
$d_{MV}$ with GM consecutive      & 0.0675 & 0.0757 & 0.0838 \\
\hline
\end{tabular}
\caption{c = 8, $n = 800$}
\end{subtable}
\end{tabular}

\vspace{3mm}
\caption{
Comparison of MSEs across metrics using $l_2$ localizer for $p = 0.4$, $q = 0.2$, $m = 20$, $N = 50$, and $nmc = 500$ with different $n$ for the Atlanta model.
Here, we apply ISOMAP to CMDS $d_{MV}$ distance matrices with embedding dimension $c = 4, 8$, and use the resulting 1 dimensional embedding for localization.
As with $d_{MV}^2$, localization using $d_{MV}$ with true alignment remains the only method consistently below the chance level $0.068$. All shuffled methods, including those with graph matching, fail to recover $t^*$. 
Both choices of higher CMDS dimension, $c=4$ and $c=8$, fail to perform better than the $d^2_{MV}$ mirror. 
}
\label{tab:A-appendix}
\end{sidewaystable}

\subsection{Proof of Theorem \ref{thm:independentdMV_convergence}}
We begin with an elementary bound that addresses the impact of derangement; we include it for reader reference.

\begin{proposition}\label{prop:derangement-2d}
For any $n>2$, denote $\sigma$ as a fixed derangement in $S_n$, that is $\forall i \in [n], \sigma(i) \neq i$. Consider two random variables in $X,Y\in\RR^1$ with a joint measure $\mu_{X,Y}$ on $\RR^2$. For a function $g: \RR^d \to \RR$, denote $\EE_{\mu}[g] : =\EE_{(x_1,x_2,\cdots,x_d) \sim \mu}[g(x_1,x_2,\cdots,x_d)]$.
Denote $\{(x_i,y_i)^n_{i=1}\}$ as $n$ i.i.d samples from $\mu_{X,Y}$ defined on the probability space $((\RR^2)^{\times n}, (\mathcal{B}(\RR^2))^{\times n}) = : (\Omega, \mathcal{B} ).$ The measure on $\Omega$ is denoted as $\PP_{\Omega}$. Denote $
\{ \left(x_i, y _{\sigma(i)} \right); i \in [n]\}$ as the sample after the derangement. Then we have for any function bounded on support of $\mathcal{S}: = \text{support of } \mu_X \otimes \mu_Y $, $f: \RR^2 \to \RR$, denote 
$
B_{f,\mathcal{S}} = \sup_{(x,y) \in \mathcal{S}} |f(x,y)|,
$
and for any $\epsilon > 0$, 
$$
\PP_{\Omega} \left( \left| \frac{1}{n} \sum^n_{i=1} f\left(x_i, y_{\sigma(i)}\right)  -  \EE_{ \mu_X \otimes \mu_Y}[f]   \right| \ge \epsilon \right) \le \frac{ 3 B_{f,\mathcal{S}}^2 }{n \epsilon ^2}.
$$
\end{proposition}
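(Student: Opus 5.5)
The plan is Chebyshev's inequality applied to $S_n := \frac{1}{n}\sum_{i=1}^n f(x_i, y_{\sigma(i)})$. First we check that $S_n$ is unbiased for $\EE_{\mu_X\otimes\mu_Y}[f]$. Since $\sigma$ is a derangement, $\sigma(i)\neq i$, so for each $i$ the variables $x_i$ and $y_{\sigma(i)}$ come from distinct independent sample pairs. Hence $(x_i,y_{\sigma(i)})\sim\mu_X\otimes\mu_Y$ and $\EE[f(x_i,y_{\sigma(i)})]=\EE_{\mu_X\otimes\mu_Y}[f]$. Moreover each pair $(x_i,y_{\sigma(i)})$ lies in $\mathcal{S}$ almost surely, so $|f(x_i,y_{\sigma(i)})|\le B:=B_{f,\mathcal{S}}$. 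Chebyshev then gives
\[
\PP_\Omega\bigl(|S_n-\EE S_n|\ge\epsilon\bigr)\le \frac{\Var(S_n)}{\epsilon^2},
\]
and it remains to show $\Var(S_n)\le 3B^2/n$.

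Write $Z_i := f(x_i,y_{\sigma(i)}) - \EE_{\mu_X\otimes\mu_Y}[f]$. Then
\[
\Var(S_n)=\frac{1}{n^2}\sum_{i,j}\Cov(Z_i,Z_j).
\]
The key structural observation is that $Z_i$ depends only on the sample indices $\{i,\sigma(i)\}$. Therefore $Z_i$ and $Z_j$ are independent, and their covariance vanishes, unless the index sets $\{i,\sigma(i)\}$ and $\{j,\sigma(j)\}$ intersect. For fixed $i$ and $j\neq i$, these sets intersect exactly when $j=\sigma(i)$, $\sigma(j)=i$ (i.e. $j=\sigma^{-1}(i)$), or $\sigma(j)=\sigma(i)$. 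The last case is impossible for $j\neq i$ because $\sigma$ is injective. So each $i$ has at most two partners $j\neq i$, namely $\sigma(i)$ and $\sigma^{-1}(i)$, plus the diagonal term $j=i$.

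Each covariance is bounded using $|\Cov(Z_i,Z_j)|\le \sqrt{\Var Z_i\,\Var Z_j}\le B^2$, since $\Var Z_i\le \EE f^2\le B^2$. The number of nonzero terms is therefore at most $3n$, which gives $\Var(S_n)\le 3nB^2/n^2=3B^2/n$. This produces exactly the constant $3$ in the statement.

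The main point needing care is the dependency count. For example, when $\sigma$ contains a 2-cycle, $\sigma(i)=\sigma^{-1}(i)$ and the two partners coincide, so the count of at most three nonzero terms per row only becomes smaller. A second subtlety is measurability and boundedness: $f$ need only be bounded on $\mathcal{S}$, so we must confirm that the pairs land in $\mathcal{S}$ almost surely. This holds because $x_i\in\supp\mu_X$ and $y_{\sigma(i)}\in\supp\mu_Y$ almost surely, so $(x_i,y_{\sigma(i)})\in\supp\mu_X\times\supp\mu_Y=\mathcal{S}$.
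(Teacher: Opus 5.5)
Your proposal is correct and follows the paper's proof essentially step for step. The shared steps are: unbiasedness from the derangement making $x_i$ and $y_{\sigma(i)}$ independent; diagonal variances bounded by $B_{f,\mathcal{S}}^2$; at most two nonzero off-diagonal covariances per index (at $\sigma(i)$ and $\sigma^{-1}(i)$), each bounded via Cauchy--Schwarz; and then Chebyshev with $\Var(S_n)\le 3B_{f,\mathcal{S}}^2/n$. Your explicit check that the pairs $(x_i,y_{\sigma(i)})$ land in $\mathcal{S}$ almost surely is a small point the paper leaves implicit.
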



\begin{proof}
First we note for any $i$ recall $\sigma(i) \neq i$, then $(x_i, y_i) \ind (x_{\sigma(i)}, y_{\sigma(i)}) $. Now consider two projections $\Pi_x(x,y) = x$ and $\Pi_y(x, y)= y$. By independence, $\Pi_x(x_i, y_i) \ind \Pi_y(x_{\sigma(i)}, y_{\sigma(i)})$, that is, $x_i \ind y_{\sigma(i)}$. Further $x_i \sim \mu_X$, $y_{\sigma(i)} \sim \mu_Y$, thus for any $i$, $(x_i, y_{\sigma(i)}) \sim \mu_X \otimes \mu_Y $. Then 
$$
\EE \biggl[ \frac{1}{n} \sum^n_{i=1} f\left(x_i, y_{\sigma(i)}\right) \biggr] = \frac{1}{n} \sum^n_{i=1} \EE \biggl[ f\left(x_i, y_{\sigma(i)}\right) \biggr] =  \EE_{ \mu_X \otimes \mu_Y}[f].
$$
Next we show the variance is shrinking with $n$. Note
$$
\Var\biggl[ \frac{1}{n} \sum^n_{i=1} f\left(x_i, y_{\sigma(i)}\right) \biggr] = \frac{1}{n^2} \left( \sum^n_{i=1} \Var \biggr[ f\left(x_i, y_{\sigma(i)}\right) \biggr]  + \sum^n_{k=1} \sum^n_{t \neq k } \Cov \biggl[f\left(x_k, y_{\sigma(k)}\right) , f\left(x_t, y_{\sigma(t)}\right)   \biggr] \right),
$$
Note $f$ is bounded on $\mathcal{S}$, i.e., the support of $\mu_X \otimes \mu_Y$, so there exists a $c'_{f,\mathcal{S}}$ such that for all $k \in [n]$ we have
$$\Var [f\left(x_k, y_{\sigma(k)}\right)] = \Var_{(x,y)\sim\mu_X \otimes \mu_Y} [f(x,y)] \leq \EE_{(x,y)\sim\mu_X \otimes \mu_Y} [f(x,y)^2] \leq B_{f,\mathcal{S}}^2.$$ 
Thus the first term is bounded:
$$
\sum^n_{i=1} \Var \biggr[ f\left(x_i, y_{\sigma(i)}\right) \biggr] \leq n B_{f,\mathcal{S}}^2.
$$
Now consider the second term. For a given $k \in [n]$, there are at most two terms in the summand that are nonzero, which are both bounded by a constant. Given the derangement $\sigma$ and a fixed $k$, denote $t_1 :=\sigma(k)$; $t_2:=\sigma^{-1}(k)$. As long as $t \neq k, t_1, t_2$, then $\{k,\sigma(k)\}\cap\{t,\sigma(t)\}=\varnothing$ and 
$$
f\left(x_k, y_{\sigma(k)}\right) \ind f\left(x_t, y_{\sigma(t)}\right) \Rightarrow \Cov \biggl[f\left(x_k, y_{\sigma(k)}\right) , f\left(x_t, y_{\sigma(t)}\right)\biggr] = 0.
$$
For the at most two nonzero terms in the sum, Cauchy-Schwarz yields
$$\mathrm{Cov}\left[f(x_k,y_{\sigma(k)}),f(x_t,y_{\sigma(t)})\right]\leq \sqrt{\mathrm{Var}\left[f(x_k,y_{\sigma(k)})\right]\mathrm{Var}\left[f(x_t,y_{\sigma(t)})\right]}\leq B_{f,\mathcal{S}}^2,$$
so
$$
\sum^n_{k=1} \sum_{t \neq k } \Cov \biggl[f\left(x_k, y_{\sigma(k)}\right) , f\left(x_t, y_{\sigma(t)}\right) \biggr]\leq 2nB_{f,\mathcal{S}}^2.
$$
Thus
$$
\Var \biggl[ \frac{1}{n} \sum^n_{i=1} f\left( x_{i} - y_{\sigma(i)} \right) \biggl] \leq \frac{3B_{f,\mathcal{S}}^2}{n}.
$$
and the result now follows from Chebyshev's inequality.

\end{proof}

We now prove Theorem~\ref{thm:independentdMV_convergence}. 
\begin{proof}

Denote $\PP_{S_n}$ as the uniform distribution on $S_n$.
Because $\sigma$ is independent of the samples, the probability measure on $\Omega \times S_n$ is the product measure $\PP_{\Omega} \otimes \PP_{S_n}$, denoted as $\PP_{\Omega\times S_n}$.
Let $A$ denote the event
$$
A  :=\left\{\omega \times \sigma \in \Omega \times S_n: \left| \frac{1}{n} \sum^n_{i=1} f\left(x_i, y_{\sigma(i)}\right)  -  \EE_{ \mu_X \otimes \mu_Y}[f]   \right|> \epsilon\right\}. 
$$ Note $A\subseteq \Omega \times S_n $. We control $\PP_{\Omega\times S_n}(A)$. 
For any permutation $\sigma$, we divide $[n]$ into two disjoint sets based on the points fixed by $\sigma$, namely $\{i : \sigma(i) = i\}$ and its complement $\{i : \sigma(i) \neq i\}$. Denote the cardinality of the first set by $N^{\sigma}_{\text{fix}}$. We deduce that 
\begin{align*}
\frac{1}{n} &\sum^n_{i=1} f\left(x_i, y_{\sigma(i)}\right)  -  \EE_{ \mu_X \otimes \mu_Y}[f]\\
&= 
\underbrace{\frac{1}{n} \sum_{i:i=\sigma(i)}  f\left(x_i, y_{\sigma(i)}\right)}_{=:Y_n^{\sigma}} - \frac{N^{\sigma}_{\text{fix}}}{n}\EE_{ \mu_X \otimes \mu_Y}\\
&\quad +  \underbrace{\frac{n-N^{\sigma}_{\text{fix}}}{n} \left(\frac{1}{n-N^{\sigma}_{\text{fix}}} \sum_{i:i \neq \sigma(i)}  f\left(x_i, y_{\sigma(i)}\right) - \EE_{ \mu_X \otimes \mu_Y}  \right) }_{=:W_n^{\sigma}}.
\end{align*}
where $Y_n^{\sigma}$ and $W_n^{\sigma}$ are random variables defined above. 

Assume $f$ is upper bounded by $B_{f,\mathcal{S}}>\frac{\epsilon}{4}$ on $\mathcal{S}$. Denote $\alpha :  = \frac{\epsilon}{4B_{f,\mathcal{S}}}$, $ 0< \alpha <1 $. 
Denote by $E$ the event
$$
E : = \Omega \times \{\sigma \in S_n:N^{\sigma}_{\text{fix}} \leq \alpha n \}. $$
Since $|\sup_{(x,y) \in \mathcal{S}} f(x,y)| \leq B_{f,\mathcal{S}}$, we know that
$$
|Y_n^{\sigma}| \leq \frac{N^{\sigma}_{\text{fix}}B_{f,\mathcal{S}}}{n},
$$
and hence on $E$, we have that 
$|Y_n^{\sigma}| \leq \alpha B_{f,\mathcal{S}}=\frac{\epsilon}{4}$ and
$$\bigg|\frac{N^{\sigma}_{\text{fix}}}{n} \EE_{ \mu_X \otimes \mu_Y}[f]\bigg| \leq \frac{N^{\sigma}_{\text{fix}}}{n} \EE_{ \mu_X \otimes \mu_Y}[|f|] \leq  \frac{B_{f,\mathcal{S}} N^{\sigma}_{\text{fix}}}{n} \leq \alpha B_{f,\mathcal{S}}=\frac{\epsilon}{4}.
$$
Now, on $A$, we see that   
$|Y_n^{\sigma}+W_n^{\sigma} - \frac{N^{\sigma}_{\text{fix}}}{n} \EE_{ \mu_X \otimes \mu_Y}[f] | > \epsilon
$, so 
$$
|W_n^{\sigma} | > \epsilon - |Y_n^{\sigma}| - \bigg|\frac{N^{\sigma}_{\text{fix}}}{n} \EE_{ \mu_X \otimes \mu_Y}[f]\bigg|,
$$ 
Thus, on $E$, we see that $|Y_n^{\sigma}| \leq \frac{\epsilon}{4}$ and $\bigg|\frac{N^{\sigma}_{\text{fix}}}{n} \EE_{ \mu_X \otimes \mu_Y}[f]\bigg| \leq \frac{\epsilon}{4} $, and therefore on $A \cap E$, we have $|W_n^{\sigma}| > \frac{\epsilon}{2}$.
If we define $A'$ by
$$
A' := \left\{ \omega \times \sigma \in \Omega \times S_n:  |W_n^{\sigma}| > \frac{\epsilon}{2} \right\}
$$
then we conclude from the above that
$A \cap E \subseteq A' \cap E.$
We bound $\PP_{\Omega \times S_n}(A)$ by noting that 
\begin{align*}
\PP_{\Omega \times S_n}(A) = & \PP_{\Omega \times S_n}(A\cap E) + \PP_{\Omega \times S_n}(A \cap E^c) \\
\le & \PP_{\Omega \times S_n}(A' \cap E) + \PP_{\Omega \times S_n}(E^c)
\end{align*}
Consider the first term:
\[
\begin{aligned}
&\PP_{\Omega \times S_n}(A' \cap E) \\
& =\sum_{\sigma_0: N^{\sigma_0}_{\text{fix}} \leq \alpha n }  
\PP_{\Omega}\!\left(|W_n^{\sigma_0}|>\frac{\epsilon}{2}\right) \, \PP_{S_n}(\sigma_0) \quad 
\\
& = \sum_{\sigma_0: N^{\sigma_0}_{\text{fix}} \leq \alpha n }  \PP_{\Omega}\left( \bigg|\frac{n-N^{\sigma_0}_{\text{fix}}}{n} \left(\frac{1}{n-N^{\sigma_0}_{\text{fix}}} \sum_{i:i \neq \sigma_0(i)}  f\left(x_i, y_{\sigma_0(i)}\right) - \EE_{ \mu_X \otimes \mu_Y}[f]  \right) \bigg|> \frac{\epsilon}{2}  ~  \right)\PP_{S_n}(\sigma_0) \\
&=   \sum_{\sigma_0: N^{\sigma_0}_{\text{fix}} \leq \alpha n }  \PP_{\Omega}\left( \bigg| \frac{1}{n-N^{\sigma_0}_{\text{fix}}} \sum_{i:i \neq \sigma_0(i)}  f\left(x_i, y_{\sigma_0(i)}\right) - \EE_{ \mu_X \otimes \mu_Y}[f] \bigg|>\frac{n}{n-N^{\sigma_0}_{\text{fix}}} \frac{\epsilon}{2}  \right) \PP_{S_n}(\sigma_0) \\
& \leq \sum_{\sigma_0: N^{\sigma_0}_{\text{fix}} \leq \alpha n } 
\frac{ (12B_{f,\mathcal{S}}^2)(n-N^{\sigma_0}_{\text{fix}})^2 }{(n - N^{\sigma_0}_{\text{fix}} ) n^2 \epsilon ^2}  \PP_{S_n}(\sigma_0) \quad (\text{Proposition }\ref{prop:derangement-2d}) \\
& = \sum_{\sigma_0: N^{\sigma_0}_{\text{fix}} \leq \alpha n }  \frac{ ( 12B_{f,\mathcal{S}}^2)(n-N^{\sigma_0}_{\text{fix}}) }{n^2 \epsilon ^2} \PP_{S_n}(\sigma_0) \\
& \leq \sum_{\sigma_0: N^{\sigma_0}_{\text{fix}} \leq \alpha n } \frac{  12B_{f,\mathcal{S}}^2}{n \epsilon ^2} \PP_{S_n}(\sigma_0)\\
& = \frac{  12B_{f,\mathcal{S}}^2}{n \epsilon ^2} \PP_{S_n}\left( N^{\sigma_0}_{\text{fix}} \leq \alpha n \right)  \\
& \le \frac{  12B_{f,\mathcal{S}}^2}{n \epsilon ^2}.
\end{aligned}
\]

An easy computation shows $\EE_{S_n}[N^{\sigma}_{\text{fix}}] = 1$, so we bound $\PP(E^c)$ by Markov's inequality:
$$
\PP_{\Omega \times S_n}(E^c ) = \PP_{S_n}(N^{\sigma}_{\text{fix}} > \alpha n) \leq \frac{1}{\alpha n}.
$$
Combining these completes the proof.
\end{proof}

\subsection{Proof of Corollary \ref{Cor:partial shuffling}}
Let $A \subseteq \Omega \times S_{\alpha_n}$ denote the event
$$
\resizebox{\textwidth}{!}{$\displaystyle A : = \{ \omega \times \sigma_{\alpha_n} \in \Omega \times S_{\alpha_n} , \left| \hat{d}^2_{MV}(\mathbf{X}^{t_i}, P_{\alpha_n} \mathbf{X}^{t_j})-  
\left( \left(1-\frac{\alpha_n}{n}\right) d_{MV}(X_{t_i},X_{t_j}) +  \frac{\alpha_n}{n}\, \text{ind-}d^2_{MV}(X_{t_i}, X_{t_j})\right)
\right| \geq \epsilon  \}.$}
$$
We derive that 
\begin{align*}
\hat{d}^2_{MV}(\mathbf{X}_{t}, P_{n,\alpha} \mathbf{X}_{t'})  = & \frac{1}{n}\| \bX_t - P_{\alpha_n}  \bX_{t'}\|^2_F   \\
= & \frac{1}{n}\| \bX_t - (I_{n - \alpha_n} \oplus P_{\alpha_n} )   \bX_{t'}\|^2_F \\
= & \frac{1}{n} \left(  \sum^{n - \alpha_n}_{i=1} \left( \left( \bX_t \right)_i - (\bX_{t'})_i \right)^2  + \sum_{i = n - \alpha_n + 1}^n \left( \left( \bX_t \right)_i - (\bX_{t'})_{\sigma^{\alpha_n}(i)} \right)^2     \right) \\
= & \frac{n - \alpha_n}{n} ~ \frac{1}{n - \alpha_n}  \sum^{n - \alpha_n}_{i=1} \left( \left( \bX_t \right)_i - (\bX_{t'})_i \right)^2  + \frac{\alpha_n}{n} \frac{1}{\alpha_n}\sum_{i = n - \alpha_n + 1}^n \left( \left( \bX_t \right)_i - (\bX_{t'})_{\sigma^{\alpha_n}(i)} \right)^2. 
\end{align*}

Recall that $\{\left(\left( \bX_t \right)_i - (\bX_{t'})_i \right)^2 \} ^{n - \alpha_n}_{i=1}$ are i.i.d samples with mean $\EE[(X_t - X_{t'})^2] = d^2_{MV}(X_{t_i}, X_{t_j})$ and variance $\Var[(X_{t}-X_{t'})^2]$. Further, $X_t,X_{t'}$ are both supported on a subset of $[0,1]$, so $\Var[(X_{t}-X_{t'})^2] \leq 1$. Chebyshev's inequality guarantees that
\begin{align*}
& \PP_{\Omega \times S_{\alpha_n}} \left( \underbrace{  \bigg| \frac{1}{n - \alpha_n}  \sum^{n - \alpha_n}_{i=1} \left( \left( \bX_t \right)_i - (\bX_{t'})_i \right)^2  - d^2_{MV}(X_{t_i}, X_{t_j}) \bigg| \ge \epsilon }_{ = : B} \right) \\
= & \PP_{\Omega} \left(  \bigg| \frac{1}{n - \alpha_n}  \sum^{n - \alpha_n}_{i=1} \left( \left( \bX_t \right)_i - (\bX_{t'})_i \right)^2  - d^2_{MV}(X_{t_i}, X_{t_j}) \bigg| \ge \epsilon  \right)  \\
\le &  \frac{1}{(n-\alpha_n) \epsilon^2}.    
\end{align*}
For the second term we apply Theorem \ref{thm:independentdMV_convergence} with $f(x,y)=(x-y)^2$ and conclude that

$$
\PP_{\Omega \times S_{\alpha_n}} \left(  \underbrace{ \bigg| \frac{1}{ \alpha_n}  \sum^{n }_{i = n - \alpha_n +1 } \left( \left( \bX_t \right)_i - (\bX_{t'})_{\sigma^{\alpha_n}(i)} \right)^2  - \text{ind-}d^2_{MV}(X_{t_i}, X_{t_j}) \bigg| \ge \epsilon }_{ = : C} \right)  
\le   \frac{12}{\alpha_n \epsilon^2} + \frac{4}{\alpha_n \epsilon}  .  
$$
Finally, $A \subseteq B \cup C$, and hence 
$$
\PP_{\Omega \times S_{\alpha_n}} (A) \le \PP_{\Omega \times S_{\alpha_n}}( B \cup C) \leq \PP_{\Omega \times S_{\alpha_n}}(B) + \PP_{\Omega \times S_{\alpha_n}}(C) \leq 
\frac{1}{(n-\alpha_n) \epsilon^2} + \frac{12}{\alpha_n \epsilon^2} + \frac{4}{\alpha_n \epsilon}. 
$$


\subsection{Proof of Theorem \ref{thm:London_main}}

The first bullet point in the unshuffled case is a property of $\tilde{\psi}^1_{d_{MV}}$, and it is proved in \cite{chen2024euclidean} Theorem 2.26. 
For the completely shuffled case $\alpha=1$, we calculate the ind-$d_{MV}$ for $X^L_{t_i}$ and $X^L_{t_j}$ from London Model:

When $t_i,t_j < t^*$, note $X^L_{t_i} \sim \frac{1}{m} \text{Bin}(i,p)$, $X^L_{t_j} \sim \frac{1}{m} \text{Bin}(j,p)$. 
\begin{align*}
\text{ind-}d_{MV}(X^L_{t_i},X^L_{t_j})^2 & =  \EE[(X^L_{t_i}-X^L_{t_j})^2] \quad \text{ ($w=1$ is the minimizer.)}  \\
& =  \Var[X^L_{t_i}-X^L_{t_j}]   + \left(\EE[X^L_{t_i}]-\EE[X^L_{t_j}]\right)^2 \\
& = \Var[X^L_{t_i}]+\Var[X^L_{t_j}] + \left(\EE[X^L_{t_i}]-\EE[X^L_{t_j}]\right)^2  \quad (X^L_{t_i} \ind X^L_{t_j})\\
& = \frac{1}{m^2}  (i+j)(p-p^2) + \frac{1}{m^2}\left(p(i-j)\right)^2. 
\end{align*}
Similarly, when $t_i < t^* < t_j$, $X^L_{t_i} \sim \frac{1}{m} \text{Bin}(i,p)$, $X^L_{t_j} = B_j+B_2$ with $ B_j\sim \frac{1}{m}\text{Bin}(j - t_m^*,q), B_2 \sim \frac{1}{m}\text{Bin}(t_m^*,p), B_j \ind B_2$.
\begin{align*}
\text{ind-}d_{MV}(X^L_{t_i},X^L_{t_j})^2
& = \Var[X^L_{t_i}]+\Var[X^L_{t_j}] + \left(\EE[X^L_{t_i}]-\EE[X^L_{t_j}]\right)^2  \\
& = \Var[X^L_{t_i}]+\Var[B_j] + \Var[B_2]+ \left(\EE[X^L_{t_i}]-\EE[X^L_{t_j}]\right)^2  \\
& = \frac{1}{m^2}\left( \left(p-p^2\right)i + \left(p-p^2\right)t^*_m + (j-t^*_m)(q-q^2) + \left(\left( j-t^*_m   \right)q +\left(t^*_m - i \right)p\right)^2   \right).
\end{align*}
Finally, when $t_i, t_j > t^*$, $X^L_{t_i} = B_i + B_2, X^L_{t_j} = B_j + B_2$, then 
\begin{align*}
\text{ind-}d_{MV}(X^L_{t_i},X^L_{t_j})^2
& = \Var[X^L_{t_i}]+\Var[X^L_{t_j}] + \left(\EE[X^L_{t_i}]-\EE[X^L_{t_j}]\right)^2  \\
& = \Var[B_i]+\Var[B_2]+\Var[B_j] + \Var[B_2]+ \left(\EE[X^L_{t_i}]-\EE[X^L_{t_j}]\right)^2  \\
& = \frac{1}{m^2}\left( \left(q-q^2\right)(i-t^*_m) + 2\left(p-p^2\right)t^*_m + (j-t^*_m)(q-q^2) + q(i-j)^2  \right) \\
& = \frac{1}{m^2}\left(  (q-q^2)(i+j)+2t^*_m(p-p^2-q+q^2)   + q(i-j)^2  \right).
\end{align*}
The above show the form of $\mathcal{D^L}^{(2)}_{\text{ind-}d_{MV}}$. Note that for any $m$, for all $i,j\in[m]$,
$$
(\mathcal{D^L}^{(2)}_{\text{ind-}d_{MV}})_{i,j} - (\mathcal{D}^{(2)}_Z)_{i,j} \sim O\left(\frac{1}{m}\right).
$$
And in the proof of Theorem 2.26 in \cite{chen2024euclidean}, all other properties are consequences of this property. Thus the same proof can go through for $\mathcal{D^L}^{(2)}_{\text{ind-}d_{MV}}$ too. Thus $\tilde{\psi}^L_{\text{ind-}d_{MV}}$ has the same property. 

For $0<\alpha<1$, we combine these two calculations to obtain for all $i,j\in[m]$
$$
(\mathcal{D^L}^{(2)}_{\alpha-d_{MV}})_{i,j} - (\mathcal{D}^{(2)}_Z)_{i,j} = \alpha (\mathcal{D^L}^{(2)}_{\text{ind-}d_{MV}})_{i,j} + (1-\alpha)(\mathcal{D^L}^{(2)}_{d_{MV}})_{i,j} - (\mathcal{D}_Z^{(2)})_{i,j} \sim O\left(\frac{1}{m}\right).
$$
By the same argument as above we have the result. 

For the second bullet point, we observe that whenever $s\leq t$, $X_s$ is stochastically dominated by $X_t$, since $\PP[X_s\leq X_t]=1$, so by \cite{de20211}, 
$$
W_1(X^L_{t_i},X^L_{t_j}) = |\EE [X^L_{t_i}] - \EE[X^L_{t_j}]|.
$$

For the third bullet point, consider a graph $\bA$ with $n$ vertices that is generated from a random dot product model with the distribution of random variable $X$. That is, latent positions $\{x_1, x_2, ... x_n\}$ are drawn i.i.d from $X \in \R^1$, then each edge is a Bernoulli sampled with probability as inner product of latent positions. Then the average degree is 
\begin{align*}
\EE \biggl[ \frac{1}{n} \sum^n_{i =1} \sum^n_{j \neq i} \bA_{i,j} \biggr] & = 
\EE \biggl[  \EE \biggl[ \frac{1}{n} \sum^n_{i =1} \sum^n_{j \neq i} \bA_{i,j} | \{x_1, x_2, ... x_n\}  \biggr] \biggr]  \\
& =
\EE \biggl[   \frac{1}{n} \sum^n_{i =1} \sum^n_{j \neq i} \EE[\bA_{i,j} | \{x_1, x_2, ... x_n\}]   \biggr] \\ 
& = 
\EE \biggl[   \frac{1}{n} \sum^n_{i =1} \sum^n_{j \neq i} x_i x_j  \biggr]  \\
& = 
\frac{1}{n} \sum^n_{i =1} \sum^n_{j \neq i} \EE[x_i] \EE[x_j]  \\
& = (n-1) \left(\EE[X]\right)^2.
\end{align*}
For the London model, assuming $c=0$,$t^*_m/m = t^*$,
$$
\EE[X^L_{t_i}] = \begin{cases}
p t_i & t_i \le t^*, \\
p \frac{t^*_m}{m} + q(t_i - \frac{t^*_m}{m}) = p t^* + q(t_i - t^*) & t_i > t^*.
\end{cases}
$$
\begin{align*}
d_{\mathrm{deg}}(X_t,X_s)&=(n-1)|\EE[X_t]^2-\EE[X_s]^2|\\
    &=(n-1)|\psi_Z(t)^2-\psi_Z(s)^2|\\
    &=|\psi_{\mathrm{deg}}(t)-\psi_{\mathrm{deg}}(s)|
\end{align*}
when we set $\psi_{\mathrm{deg}}(t)=(n-1)\psi_Z(t)^2$.

\subsection{Proof of Theorem~\ref{thm:London_MLE}}

Consider $n$ sample trajectories from the London model, $\{X_i(t):i\in[n], t\in[m]\}$. The likelihood is given by
\begin{align*}
    \mathrm{lik}(\{X_i(t)\}_{i,t}|p,q,t^*) &=\prod_{i=1}^n\prod_{t=1}^{t^*-1}p^{\mathbf{1}_{>0}(X_i(t)-X_i(t-1))}(1-p)^{1-\mathbf{1}_{>0}(X_i(t)-X_i(t-1))}\\
    &\qquad \times\prod_{t=t^*}^m q^{\mathbf{1}_{>0}(X_i(t)-X_i(t-1))}(1-q)^{\mathbf{1}_{>0}(X_i(t)-X_i(t-1))}\\
    &=\prod_{t=1}^{t^*-1}p^{\sum_i \mathbf{1}_{>0}(X_i(t)-X_{i}(t-1))}(1-p)^{n-\sum_i \mathbf{1}_{>0}(X_i(t)-X_i(t-1))}\\
    &\qquad\times\prod_{t=t^*}^m q^{\sum_i \mathbf{1}_{>0}(X_i(t)-X_i(t-1))}(1-q)^{n-\sum_i \mathbf{1}_{>0}(X_i(t)-X_i(t-1))}.
\end{align*}
To complete the proof of sufficiency, we observe that
$$\sum_{i=1}^n \mathbf{1}_{>0}(X_i(t)-X_i(t-1))=\sum_{i=1}^n (X_i(t)-X_i(t-1))/\delta=\sum_{j=0}^m j(c_t(j)-c_{t-1}(j))/\delta.$$

Now we consider maximizing the likelihood. If we fix the supposed changepoint at time $t$, the log-likelihood becomes
\begin{align*}
\ell(p,q)&= \sum_{s=1}^{t-1} \sum_{i=1}^n \left(\mathbf{1}_{>0}(X_i(s)-X_i(s-1))\log\left(\frac{p}{1-p}\right)\right)+n(t-1)\log(1-p)\\
&\qquad +\sum_{s=t}^m \sum_{i=1}^n\left(\mathbf{1}_{>0}(X_i(s)-X_i(s-1))\log\left(\frac{q}{1-q}\right)\right)+n(m-t+1)\log(1-q).
\end{align*}
For a given $i$, we have
\begin{align*}
\sum_{s=1}^{t-1}\mathbf{1}_{>0}(X_i(s)-X_i(s-1))&=\sum_{s=1}^{t-1}(X_i(s)-X_i(s-1))/\delta=(X_i(t-1)-c)/\delta,\\
\sum_{s=t}^m \mathbf{1}_{>0}(X_i(s)-X_i(s-1))&=\sum_{s=t}^m (X_i(s)-X_i(s-1))/\delta=(X_i(m)-X_i(t-1))/\delta.
\end{align*}
So summing over $i$ gives
\begin{align*}
\ell(p,q)&= \sum_{j=0}^{t-1}jc_{t-1}(j)\log\left(\frac{p}{1-p}\right)+n(t-1)\log(1-p)\\
&\qquad +\sum_{j=0}^m j(c_m(j)-c_{t-1}(j))\log\left(\frac{q}{1-q}\right)+n(m-t+1)\log(1-q).
\end{align*}
The remaining optimization now follows the standard argument for the MLE of a Bernoulli distribution.

\subsection{Proof of Theorem \ref{thm:Atlanta-main}}

We want to determine the behavior of $d_{MV}^2(X_s^A,X_t^A)$ when $N$ is large. First we show $d_{MV}^2$ can be written as a trace. Let $T_p$ denote the transition probability matrix for the Markov chain defining one step of the Atlanta model when the transition probability is $p$, and let $M=[(i-j)^2]_{i,j=1}^N$. 
Since all random variables in Atlanta model are positive supported, we know $w=1.$ Assume $i<j<t^*_m$:
\begin{align*}
d^2_{MV}\left(X^A_{t_i} , X^A_{t_j}\right) & = \EE[  \left(X^A_{t_i} - X^A_{t_j}\right)^2 ] \\
&=\sum_{n=1}^N\sum_{l=1}^N\PP\bigl( X_{t_i}^A=S_n, X_{t_j}^A=S_l\bigr)(S_l-S_n)^2\\
& =  \sum^N_{n=1} \frac{1}{N}\sum^N_{l=1} \PP\left(X^A_{t_j} = S_l | X^A_{t_i} = S_n \right)  \left(S_l - S_n \right)^2 (X^A_{t_i}\text{ is uniform}) \\
&= \frac{1}{N} \sum^N_{n=1}\sum^N_{l=1} \left(T^{j-i}_{p}\right)_{n,l}(\delta_N(n-l))^2 \\
& = \frac{\delta^2_N}{N} \tr\left(T_p^{j-i} M \right).
\end{align*}
When $i<t^*_m<j$, 
$$
\PP(X^A_{t_i} = S_l | X^A_{t_j} = S_n ) = \left(T_p^{t_m^*-i}T_q^{j-t_m^*}\right)_{n,l},
$$ and the case with $t_m^*<i<j$ looks like the first case but with $T_q$ replacing $T_p$. Now we simplify the trace expression. 
We know from \cite{kouachi2006eigenvalues} the eigenvalues of $T_p$ are: 
\[
\boxed{
\lambda_k^{(p)}
\;=\;
1 \;-\; 2p
\;+\;
2p\;\cos\Bigl(\frac{(k-1)\,\pi}{N}\Bigr),
\quad
k \;= \;1,\;\dots,\;N.
}
\]
and corresponding eigenvectors are:

$$
\boxed{
(v_k)_j = \cos\left( \frac{ (k-1) \pi}{2 N} (2j-1) \right) , k=1,2,\cdots ,N , \quad j = 1,2, \cdots, N.}
$$

Also note $$
||v_k||^2 = \sum_{j=1}^N \cos ^2\left(\frac{k \pi(2 j-1)}{2 N}\right)=\frac{N}{2}, \quad \text { for } \quad k \neq 0
$$ then we have the orthonormal eigenvectors $u_k = \sqrt{\frac{2}{N}}v_k$ for $k>1$ and $u_1 = \sqrt{\frac{1}{N}}v_1$. Note that $u_k$s are not functions of $p$ and are common eigenvectors of all $T_p$ matrices.
\begin{lemma}
    Sum of cosines and sines of an arithmetic sequence.
    \[
\sum_{k=0}^{n-1} \cos(a + k \cdot d) = \frac{\sin\left(\frac{n \cdot d}{2}\right)}{\sin\left(\frac{d}{2}\right)} \cdot \cos\left(\frac{2a + (n-1) \cdot d}{2}\right)
\]
\[
\sum_{k=0}^{n-1} \sin(a + k \cdot d) = \frac{\sin\left(\frac{n \cdot d}{2}\right)}{\sin\left(\frac{d}{2}\right)} \cdot \sin\left(\frac{2a + (n-1) \cdot d}{2}\right)
\]
\end{lemma}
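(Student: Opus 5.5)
The plan is to obtain both identities from a single geometric sum of complex exponentials and then read off its real and imaginary parts. Write $\cos(a+kd)+i\sin(a+kd)=e^{i(a+kd)}$. Then
\[
\sum_{k=0}^{n-1} e^{i(a+kd)} = e^{ia}\sum_{k=0}^{n-1}\bigl(e^{id}\bigr)^k = e^{ia}\,\frac{e^{ind}-1}{e^{id}-1},
\]
which is valid whenever $e^{id}\neq 1$, i.e.\ $\sin(d/2)\neq 0$. This is the implicit hypothesis of the lemma, since otherwise the right-hand side is undefined.

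The key step is to factor half-angles out of the numerator and the denominator. We have $e^{ind}-1=e^{ind/2}\bigl(e^{ind/2}-e^{-ind/2}\bigr)=2i\,e^{ind/2}\sin(nd/2)$, and similarly $e^{id}-1=2i\,e^{id/2}\sin(d/2)$. Dividing gives
\[
\sum_{k=0}^{n-1} e^{i(a+kd)}=\frac{\sin(nd/2)}{\sin(d/2)}\,e^{i\left(a+\frac{(n-1)d}{2}\right)}.
\]
The prefactor $\sin(nd/2)/\sin(d/2)$ is real. So taking real and imaginary parts yields the cosine and sine formulas respectively, with common phase $\frac{2a+(n-1)d}{2}$.

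As an alternative check, one could avoid complex numbers. Multiply the cosine sum by $2\sin(d/2)$ and use the product-to-sum identity $2\sin(d/2)\cos(a+kd)=\sin(a+(k+\tfrac12)d)-\sin(a+(k-\tfrac12)d)$. The sum then telescopes to $\sin(a+(n-\tfrac12)d)-\sin(a-\tfrac d2)=2\cos\bigl(a+\tfrac{(n-1)d}{2}\bigr)\sin(nd/2)$, and the analogous telescoping works for the sine sum.

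There is no real obstacle here. The only points needing care are stating the nondegeneracy condition $\sin(d/2)\neq0$ and keeping track of the half-angle factorization. In the application to the trace $\tr(T_p^{j-i}M)$ with $d=(k-1)\pi/N$, this condition holds for $k\ge 2$, and the $k=1$ term is handled separately because it is the constant eigenvector.
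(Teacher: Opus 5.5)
Your proof is correct and follows the paper's argument essentially step for step: sum the geometric series $e^{ia}\sum_k (e^{id})^k$, factor out $e^{ind/2}$ and $e^{id/2}$ to get the real prefactor $\sin(nd/2)/\sin(d/2)$ times $e^{i(a+(n-1)d/2)}$, and take real and imaginary parts. Stating the nondegeneracy condition $\sin(d/2)\neq 0$ explicitly (the paper leaves it implicit) and adding the telescoping cross-check are both worthwhile additions.
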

\begin{proof}
Note that
\[
\sum_{k=0}^{n-1} e^{(a + k \cdot d)i} = e^{ai}\frac{e^{ndi} - 1}{e^{di}-1}= e^{ai}\frac{e^{ndi/2} - e^{-ndi/2}}{e^{di/2}-e^{-di/2}}\frac{e^{ndi/2}}{e^{di/2}} =  \frac{\sin\left(\frac{n \cdot d}{2}\right)}{\sin\left(\frac{d}{2}\right)}e^{ai + ndi/2-di/2}
\]
By taking the real part of the above equation we get
\[
\sum_{k=0}^{n-1} \cos(a + k \cdot d) = \frac{\sin\left(\frac{n \cdot d}{2}\right)}{\sin\left(\frac{d}{2}\right)} \cdot \cos\left(\frac{2a + (n-1) \cdot d}{2}\right),
\]
and by taking the imaginary part we get
\[
\sum_{k=0}^{n-1} \sin(a + k \cdot d) = \frac{\sin\left(\frac{n \cdot d}{2}\right)}{\sin\left(\frac{d}{2}\right)} \cdot \sin\left(\frac{2a + (n-1) \cdot d}{2}\right).
\]
\end{proof}
\begin{lemma}
    Define $\alpha_k = u_k^\top M u_k$, then with $\omega_k=(k-1)\pi/2,$
    $$
\alpha_k = 
\begin{cases}
    \frac{N(N+1)(N-1)}{6} &\quad k = 1\\
    -\frac{\cos^2\left(\omega_k/N\right)}{N\sin^4\left(\omega_k/N\right)}&\quad \text{even } k\\
    0 &\quad \text{odd } k>1\\
\end{cases}.
$$

\end{lemma}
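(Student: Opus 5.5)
The plan is to write $M$ as a sum of low-rank pieces, so that each quadratic form $u_k^\top M u_k$ reduces to inner products of $u_k$ with the vectors $e=(1,\dots,1)^\top$, $s=(1,2,\dots,N)^\top$ and $a=(1^2,2^2,\dots,N^2)^\top$. Since $(i-j)^2=i^2+j^2-2ij$, we have
$$M = a e^\top + e a^\top - 2 s s^\top, \qquad\text{so}\qquad \alpha_k = 2(u_k^\top a)(u_k^\top e) - 2(u_k^\top s)^2 .$$

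\textbf{Case $k=1$.} Here $u_1=e/\sqrt N$, so $u_1^\top e=\sqrt N$, $u_1^\top a=\sum_j j^2/\sqrt N$ and $u_1^\top s=\sum_j j/\sqrt N$. This gives
$$\alpha_1=2\sum_{j} j^2-\frac{2}{N}\Bigl(\sum_j j\Bigr)^2=\frac{N(N+1)(2N+1)}{3}-\frac{N(N+1)^2}{2}=\frac{N(N+1)(N-1)}{6}.$$

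\textbf{Case $k>1$.} The $u_k$ are orthonormal eigenvectors of the symmetric matrix $T_p$, so $u_k\perp u_1\propto e$. The first term therefore vanishes and $\alpha_k=-2(u_k^\top s)^2$. Put $\theta=(k-1)\pi/(2N)=\omega_k/N$ and $c_j=\cos(\theta(2j-1))$.

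For odd $k>1$, I would use the reflection $j\mapsto N+1-j$. It gives $c_{N+1-j}=\cos((k-1)\pi-\theta(2j-1))=(-1)^{k-1}c_j=c_j$, so $v_k$ is symmetric about the center. Write $s=\tfrac{N+1}{2}e+r$, where $r_j=j-\tfrac{N+1}{2}$ is antisymmetric. The symmetric vector $v_k$ is orthogonal to $e$ (since $k>1$) and also orthogonal to the antisymmetric $r$. Hence $u_k^\top s=0$ and $\alpha_k=0$.

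For even $k$, I would compute $\sum_j j\,c_j$ by summation by parts:
$$\sum_{j=1}^N j c_j = N\sum_{j=1}^N c_j-\sum_{m=1}^{N-1}\sum_{j\le m}c_j .$$
The first sum is zero by orthogonality to $e$. The earlier lemma on sums of cosines of an arithmetic sequence (with $a=\theta$, $d=2\theta$) gives $\sum_{j\le m}c_j=\sin(2m\theta)/(2\sin\theta)$. The sine version of the same lemma then gives
$$\sum_{m=1}^{N-1}\sin(2m\theta)=\frac{\sin((N-1)\theta)\,\sin(N\theta)}{\sin\theta}.$$
For even $k$, $N\theta=(k-1)\pi/2$ is an odd multiple of $\pi/2$, so $\cos(N\theta)=0$ and $\sin(N\theta)=\pm1$. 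Consequently $\sin((N-1)\theta)=\sin(N\theta)\cos\theta$, and the product $\sin((N-1)\theta)\sin(N\theta)$ equals $\cos\theta$. Therefore
$$\sum_j j c_j=-\frac{\cos\theta}{2\sin^2\theta},\qquad (u_k^\top s)^2=\frac{2}{N}\cdot\frac{\cos^2\theta}{4\sin^4\theta},$$
and $\alpha_k=-\cos^2(\omega_k/N)/(N\sin^4(\omega_k/N))$.

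The main obstacle is the even case: keeping the indices straight in the nested trigonometric sums, and simplifying $\sin((N-1)\theta)\sin(N\theta)$ using the parity of $k$. The rank decomposition of $M$ and the symmetry argument for odd $k$ are routine by comparison.
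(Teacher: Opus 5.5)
Your proof is correct. Its skeleton matches the paper's. The paper also expands $(i-j)^2=i^2-2ij+j^2$ and writes $\alpha_k=\frac{2}{N}\bigl(2f_0(k)f_2(k)-2f_1(k)^2\bigr)$ with $f_d(k)=\sum_i i^d\cos(\theta(2i-1))$. It then kills $f_0(k)$ for $k>1$ using the cosine-sum lemma, which is the same fact as your $u_k\perp e$, and reduces to $\alpha_k=-\frac{4}{N}f_1(k)^2$.

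The two arguments differ only in how they evaluate $f_1$.

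\begin{itemize}
\item The paper differentiates the closed form $\sum_i\sin(\theta(2i-1))=\sin^2(N\theta)/\sin\theta$ with respect to $\theta$. This gives a single formula for $f_1$ valid for every $k$. Both parities then follow from $\sin(N\theta)\cos(N\theta)=0$ and $\sin^2(N\theta)\in\{0,1\}$.
\item You avoid calculus. For even $k$ you use summation by parts and two applications of the arithmetic-progression lemma. For odd $k$ you use a reflection argument: $v_k$ is symmetric about the center and $s-\frac{N+1}{2}e$ is antisymmetric.
\end{itemize}

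The reflection argument gives a structural reason why the odd modes vanish, while the paper's derivative trick is more compact. Your summation-by-parts identity $\sum_j jc_j=-\sin((N-1)\theta)\sin(N\theta)/(2\sin^2\theta)$ holds for every $k>1$. It already returns $0$ for odd $k>1$, since $N\theta$ is then an integer multiple of $\pi$. So the separate symmetry step is optional, and your computation can also handle both parities at once.

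Your own formula $S_m=\sin(2m\theta)/(2\sin\theta)$, at $m=N$, gives $\sum_j c_j=0$ directly. This makes $u_k\perp e$ self-contained, without appealing to the spectral properties of $T_p$.
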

\begin{proof}
    For $k = 1$, 
    \[
    \alpha_1 = u_1^\top M u_1 = \frac{1}{N} \sum_{i=1}^N\sum_{j=1}^N M_{ij} = \frac{1}{N} \sum_{i=1}^N\sum_{j=1}^N (i-j)^2 = \frac{1}{N} \sum_{i=1}^N\sum_{j=1}^N (i^2-2ij+j^2)
    \]
    \[ 
    =\frac{1}{N} \left(\frac{N(N+1)(2N+1)}{6}\cdot N - 2\left(\frac{N(N+1)}{2}\right)^2+N\cdot \frac{N(N+1)(2N+1)}{6}\right)
    \]
    \[ 
    =\frac{N(N+1)(N-1)}{6}
    \]
    If $k>1$, $\left(v_k\right)_i = \cos\left(\frac{\omega_k}{N}(2i-1)\right)$
    \[
    \alpha_k = u_k^\top M u_k = \frac{2}{N} \sum_{i=1}^N\sum_{j=1}^N \left(v_{k}\right)_i\,M_{ij}\, \left(v_{k}\right)_j = \frac{2}{N} \sum_{i=1}^N\sum_{j=1}^N \cos\left( \frac{\omega_k}{N} (2i-1) \right)(i-j)^2\cos\left( \frac{\omega_k}{N} (2j-1) \right)
    \]
    \[= \frac{2}{N} \left(\sum_{i=1}^N i^2 \cos\left( \frac{\omega_k}{N} (2i-1) \right)\right)\left(\sum_{j=1}^N\cos\left( \frac{\omega_k}{N} (2j-1) \right)\right)
    \]
    \[- \frac{2}{N} \cdot 2\left(\sum_{i=1}^N i \cos\left( \frac{\omega_k}{N} (2i-1) \right)\right)\left(\sum_{j=1}^N j\cos\left( \frac{\omega_k}{N} (2j-1) \right)\right)
    \]
    \[+ \frac{2}{N} \left(\sum_{i=1}^N \cos\left( \frac{\omega_k}{N} (2i-1) \right)\right)\left(\sum_{j=1}^N j^2 \cos\left( \frac{\omega_k}{N} (2j-1) \right)\right)
    \]
    Let $f_d(k) = \sum_{i=1}^N i^d \cos\left(\frac{\omega_k}{N}(2i-1)\right)$, then we have \[
    \alpha_k = \frac{2}{N}\left(f_2(k)\cdot f_0(k)-2f_1(k)^2 + f_0(k)f_2(k)\right).
    \]
    Note that $f_0(k) = \sum_{i=1}^N \cos\left(\frac{\omega_k}{N}(2i-1)\right)$, which is a sum of cosines of an arithmetic sequence, so
    \[
    f_0(k) = \frac{\sin\left(\omega_k\right)}{\sin\left(\omega_k/N\right)}\cdot\cos\left(\omega_k\right).
    \]
    Therefore, for integer $k$ we have $f_0(k) = 0$.
    Now $\alpha_k$ simplifies to $-\frac{4}{N}f_1(k)^2$. In order to find $f_1(k)$, first, we calculate sum of sines of the same arithmetic sequence
    \[
    \sum_{i=1}^N \sin\left(\frac{\omega_k}{N}(2i-1)\right) = \frac{\sin\left(\omega_k\right)}{\sin\left(\omega_k/N\right)}\cdot\sin\left(\omega_k\right) = \frac{\sin^2\left(\omega_k\right)}{\sin\left(\omega_k/N\right)}.
    \]
    Now if we take derivative of both sides with respect to $\frac{\omega_k}{N}$, we get
    \[
    \sum_{i=1}^N \left(2i-1\right)\cdot\cos\left(\frac{\omega_k}{N}(2i-1)\right)  = \frac{2N \cos\left(\omega_k\right) \sin\left(\omega_k\right)}{\sin\left(\omega_k/N\right)} - \frac{\cos\left(\omega_k/N\right) \sin^{2}\left(\omega_k\right)}{\sin^{2}\left(\omega_k/N\right)}.
    \]
    Therefore,
    \[
    f_1(k) = \frac{N \cos\left(\omega_k\right) \sin\left(\omega_k\right)}{\sin\left(\omega_k/N\right)} - \frac{\cos\left(\omega_k/N\right) \sin^{2}\left(\omega_k\right)}{2\sin^{2}\left(\omega_k/N\right)}+ \frac{f_0(k)}{2}
    \]
    which for integer $k$ simplifies to
    \[
    f_1(k) = - \frac{\cos\left(\omega_k/N\right) \sin^{2}\left(\omega_k\right)}{2\sin^{2}\left(\omega_k/N\right)}
    \]
    If $k$ is odd, then $\sin^{2}\left(\omega_k\right) = 0$, and if $k$ is even, $\sin^{2}\left(\omega_k\right) = 1$, so we have 
    \[
f_1(k) = 
\begin{cases}
    - \frac{\cos\left(\omega_k/N\right)}{2\sin^{2}\left(\omega_k/N\right)}&\quad \text{even } k\\
    0 &\quad \text{odd } k
\end{cases}
\]
Recall that $\alpha_k = -\frac{4}{N}f_1(k)^2$, therefore, for $k > 1$ we have 
\[
\alpha_k = 
\begin{cases}
    -\frac{\cos^2\left(\omega_k/N\right)}{N\sin^4\left(\omega_k/N\right)}&\quad \text{even } k\\
    0 &\quad \text{odd },
\end{cases}
\]
or in general,
\[
\alpha_k = 
\begin{cases}
    \frac{N(N+1)(N-1)}{6} &\quad k = 1\\
    -\frac{\cos^2\left(\omega_k/N\right)}{N\sin^4\left(\omega_k/N\right)}&\quad \text{even } k\\
    0 &\quad \text{odd } k>1\\
\end{cases}
\]
\end{proof}
\begin{lemma}
    For non-negative integer $k < N$, we have 
    \[
\mathrm{tr }(T_p^k\,M) =  2(N-1)kp -4\sum_{t=2}^k\frac{(-1)^t}{t-1}\binom{k}{t} \binom{2t-4}{t-2}p^t
\] 
\end{lemma}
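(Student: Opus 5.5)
The plan is to expand $T_p$ around the identity and reduce the trace to moments of a path Laplacian. Write $T_p = I + pL$, where $L$ is the symmetric $N\times N$ matrix with $L_{i,i\pm1}=1$, $L_{ii}=-2$ for $1<i<N$, and $L_{11}=L_{NN}=-1$. This is exactly the one-step Atlanta kernel, including the boundary rule, and it satisfies $L\mathbf 1=0$. The binomial theorem gives
\[
\tr(T_p^kM)=\sum_{t=0}^k\binom{k}{t}p^t\,\tr(L^tM).
\]
So it suffices to show three things: $\tr(L^0M)=\tr(M)=0$; $\tr(LM)=2(N-1)$; and, for $2\le t\le k<N$,
\[
\tr(L^tM)=-\frac{4(-1)^t}{t-1}\binom{2t-4}{t-2}.
\]

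\textbf{Step 1 (rank-three decomposition of $M$).} Let $x=(1,\dots,N)^\top$ and $a=(1^2,\dots,N^2)^\top$. Then $M=a\mathbf 1^\top+\mathbf 1a^\top-2xx^\top$. Since $L^t$ is symmetric and $L^t\mathbf 1=0$ for $t\ge1$, the first two pieces contribute nothing, and
\[
\tr(L^tM)=-2\,x^\top L^t x .
\]
Next compute $Lx$. In the interior it is the second difference of a linear sequence, so it vanishes there. At the ends, $(Lx)_1=x_2-x_1=1$ and $(Lx)_N=x_{N-1}-x_N=-1$. Hence $Lx=e_1-e_N$. For $t=1$ this gives $x^\top Lx=1-N$, so $\tr(LM)=2(N-1)$. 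For $t\ge2$ we get
\[
x^\top L^tx=(e_1-e_N)^\top L^{s}(e_1-e_N),\qquad s=t-2 .
\]

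\textbf{Step 2 (locality).} The matrix $L$ is tridiagonal, so $(L^s)_{1N}=0$ whenever $s<N-1$. This holds because $s\le k-2\le N-3$. Also, the reflection $i\mapsto N+1-i$ gives $(L^s)_{NN}=(L^s)_{11}$. Therefore $x^\top L^tx=2(L^s)_{11}$. Matching with the target formula reduces the lemma to the identity
\[
(L^s)_{11}=(-1)^sC_s,\qquad C_s=\frac{1}{s+1}\binom{2s}{s}.
\]
Here I use $\frac{1}{t-1}\binom{2t-4}{t-2}=C_{t-2}$.

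\textbf{Step 3 (Catalan count; the main obstacle).} Pull out the sign by setting $-L=2I-A$ with corner entry $1$ instead of $2$. Then $((-L)^s)_{11}$ is a weighted count of closed walks of length $s$ from vertex $1$. Up and down steps have weight $1$. A level step has weight $2$ at interior vertices and weight $1$ at vertex $1$. Because $s<N-1$, such a walk never reaches vertex $N$, so the right boundary is irrelevant. Equivalently, $((-L)^s)_{11}$ counts Motzkin paths of length $s$ whose level steps come in two colours, except at height $0$, where they come in one colour.

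I would then prove that these paths are counted by $C_s$. The first thing to try is the standard bijection to Dyck paths of length $2s$: pair the Dyck steps two at a time, sending UU to an up step, DD to a down step, and UD or DU to the two colours of level step. At ground level the pair DU is forbidden, which is exactly the single-colour condition. If the bijection bookkeeping gets delicate, the fallback is a generating-function argument. Let $G$ be the generating function for bicoloured Motzkin paths, which satisfies $G=1+2zG+z^2G^2$, and let $F$ be the ground-restricted version, which satisfies $F=1+zF+z^2GF$. Solving gives $F=(1-\sqrt{1-4z})/(2z)$, the Catalan generating function. Substituting $(L^s)_{11}=(-1)^sC_s$ back through Steps 1 and 2 and the binomial sum yields the stated formula.
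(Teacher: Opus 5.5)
Your proof is correct, and it takes a genuinely different route from the paper.

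\textbf{The paper's route.} It works spectrally. It takes the cited cosine eigenbasis shared by all $T_p$ and computes $\alpha_m=u_m^\top M u_m$ by trigonometric sums, where only $m=1$ and even $m$ survive. It then writes $\mathrm{tr}(T_p^kM)=\sum_m\alpha_m\bigl(1-4p\sin^2(\omega_m/N)\bigr)^k$ and expands binomially. The resulting power sums are evaluated separately: $\sum_{\text{even } m}\cot^2(\omega_m/N)=N(N-1)/2$ by a Vieta/Cauchy--Basel argument, and $\sum_{\text{even } m}\cos^2(\omega_m/N)\sin^{2t}(\omega_m/N)=c_tN$ via a residue computation of the Fourier coefficients of $\sin^{2t}$. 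Here $k<N$ enters only as a non-aliasing condition on those Fourier modes.

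\textbf{Your route.} You stay in the standard basis. Writing $T_p=I+pL$ and splitting $M=a\mathbf 1^\top+\mathbf 1a^\top-2xx^\top$, the facts $L\mathbf 1=0$ and $Lx=e_1-e_N$ reduce everything to the entries $(L^{t-2})_{11}$, $(L^{t-2})_{NN}$ and $(L^{t-2})_{1N}$. The Catalan numbers then appear combinatorially, as counts of returns to a reflecting boundary.

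\textbf{What each buys.} Your argument is more elementary and more transparent. It shows that all $N$-dependence sits in the single $t=1$ term. It shows that $k<N$ is purely a finite-propagation condition; in fact, since you only need $t-2<N-1$, your argument gives the identity even at $k=N$. And because $T_q=I+qL$ too, it yields the subsequent two-parameter lemma for $\mathrm{tr}(T_p^kT_q^lM)$ with no extra work. The spectral route gives in exchange the exact expression $\sum_m\alpha_m(\lambda_m^{(p)})^k$ for every $k$, including the regime where the far boundary is felt and the closed form breaks down.

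\textbf{One point to make explicit in Step 3.} The matrix $-L$ has off-diagonal entries $-1$, so each up or down step carries a sign. These signs cancel because a closed walk has equally many up and down steps; equivalently, conjugate by $\mathrm{diag}((-1)^i)$. This is what justifies giving those steps weight $1$.
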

\begin{proof}
    Let $T_p = U\Sigma_pU^\top$ be the eigendecomposition of $T_p$. Note that since the eigenvectors do not depend on $p$, we do not index the matrix $U$ with $p$. 
    \[
\mathrm{tr }(T_p^k\,M)
= \mathrm{tr} \left(  U \Sigma_p^k U^{\top} M \right)
= \mathrm{tr} \left(   \Sigma_p^k U^{\top} M U \right)
= \sum_{m=1}^{N} \alpha_m\,\bigl(\lambda_m^{(p)}\bigr)^k \text{ with  } \alpha_m = u^{\top}_m M u_m.
\] 
Before simplifying the last summation, note that $\sum_{m = 1}^N \alpha_m = \tr\left(M\right) = 0$, we will use this later. Now plugging in the values for $\alpha_m$ and $\lambda_m^{(p)}$ from the previous lemmas,
\begin{align*}
\mathrm{tr }(T_p^k\,M) &= \sum_{m=1}^{N} \alpha_m\,\bigl(\lambda_m^{(p)}\bigr)^k 
= \alpha_1 +\sum_{\text{even }m} \alpha_m\,\bigl(\lambda_m^{(p)}\bigr)^k\\
&= \alpha_1 -\sum_{\text{even }m} \frac{\cos^2\left(\omega_m/N\right)}{N\sin^4\left(\omega_m/N\right)}\,\left(1-2p+2p\cos\left(\frac{(m-1)\pi}{N}\right)\right)^k\\
&= \alpha_1 -\sum_{\text{even }m} \frac{\cos^2\left(\omega_m/N\right)}{N\sin^4\left(\omega_m/N\right)}\,\left(1-4p\sin^2\left(\frac{\omega_m}{N}\right)\right)^k \\
&=\alpha_1 - \sum_{\text{even }m} \frac{\cos^2\left(\omega_m/N\right)}{N\sin^4\left(\omega_m/N\right)} + \frac{4kp}{N}\sum_{\text{even }m} \frac{\cos^2\left(\omega_m/N\right)}{\sin^2\left(\omega_m/N\right)}\\
&\quad -\sum_{t=2}^k\binom{k}{t}\frac{(-4p)^t}{N}\sum_{\text{even }m}\cos^2\left(\omega_m/N\right)\sin^{2t-4}\left(\omega_m/N\right)
\end{align*}
Note that 
\[
\alpha_1 - \sum_{\text{even }m} \frac{\cos^2\left(\omega_m/N\right)}{N\sin^4\left(\omega_m/N\right)} = \sum_{m=1}^N\alpha_m = 0,
\]
Now we calculate the third term's summation,  we follow the idea of Cauchy's proof of the Basel problem.  

\[
\frac{\cos\left(Nx\right) + i\sin\left(Nx\right)}{\sin^{N}\left(x\right)} = \frac{\left(\cos\left(x\right) + i\sin\left(x\right)\right)^{N}}{\sin^{N}\left(x\right)} =\left( \cot(x) + i\right)^{N}=\sum_{j = 0}^{N}\binom{N}{j}\cot^{N-j}\left(x\right) i^{j}
\]
By taking the real part of the above equations we get
\[
\frac{\cos\left(Nx\right)}{\sin^{N}\left(x\right)} = \sum_{j = 0}^{\lfloor N/2\rfloor}\binom{N}{2j}\cot^{N-2j}\left(x\right) (-1)^{j}
\]
Note that for even $m\leq N$, $\cos\left(N\left(\omega_m/N\right)\right) = \cos\left((m-1)\pi\right/2) = 0$ and $\sin\left(\omega_m/N\right)\neq 0$, so $\omega_m/N$ is a root of the left hand side of the above equation. If $N$ is an even number, then $\cot^2\left(\omega_m/N\right)$ are roots of the polynomial
\[
P(t) = \sum_{j = 0}^{N/2}\binom{N}{2j}(-1)^{j} t ^{N/2 - j}.
\]
All of the numbers $\cot^2\left(\omega_m/N\right)$ for even $0<m\leq N$ are distinct and there are $N/2$ of them. Therefore, they are all of the roots of the above polynomial. From the Vieta formula for the sum of the roots of a polynomial, we get
\[
\sum_{\text{even } m}\cot^2\left(\omega_m/N\right) =  -\frac{-\binom{N}{2}}{\binom{N}{0}}  = \frac{N(N-1)}{2}
\]
For odd $N$, we have 
\[
\frac{\cos\left(Nx\right)}{\cot\left(x\right)\sin^{N}\left(x\right)} = \sum_{j = 0}^{ (N-1)/2}\binom{N}{2j}\cot^{N-1-2j}\left(x\right) (-1)^{j},
\]
and $\omega_m/N$ is again a root of left hand side, therefore, $\cot^2\left(\omega_m/N\right)$ are roots of the polynomial
\[
P(t) = \sum_{j = 0}^{(N-1)/2}\binom{N}{2j}(-1)^{j} t ^{(N-1)/2 - j}.
\]
As $P$ is a $(N-1)/2$ degree polynomial, $\cot^2\left(\omega_m/N\right)$ are all the roots of $P$, and again by the Vieta formula
\[
\sum_{\text{even } m}\cot^2\left(\omega_m/N\right)=  -\frac{-\binom{N}{2}}{\binom{N}{0}}  = \frac{N(N-1)}{2}.
\]
Therefore, in general,
\[
\sum_{\text{even }m} \frac{\cos^2\left(\omega_m/N\right)}{\sin^2\left(\omega_m/N\right)} = \sum_{\text{even } m} \cot^2\left(\omega_m/N\right) = \frac{N(N-1)}{2}
\]

Now we show that for $N > k\geq t$
\[
\sum_{\text{even }m}\cos^2\left(\omega_m/N\right)\sin^{2t}\left(\omega_m/N\right) = c_t N,
\]
where \[
c_t = \frac{1}{2^{2t+2}(t+1)} \binom{2t}{t}
\]
First, note that $\sin^{2t}\left(x\right)$ is an even function with period $2\pi$, so we can write a Fourier series using $\cos\left(rx\right)$ functions for nonnegative integer $r$. 
\[
\sin^{2t}\left(x\right) = \sum_{r= 0 }^\infty A_{r,t} \cos\left(rx\right),
\]
where,
\newcommand{\diff}{\mathop{}\!\mathrm{d}}
\[
A_{0,t} = \frac{1}{2\pi}\int_0^{2\pi} \sin^{2t}\left(x\right)\diff x
\]
and for $r > 0$
\[
A_{r,t} = \frac{1}{\pi}\int_0^{2\pi}\sin^{2t}\left(x\right)\cos\left(rx\right)\diff x.
\]
Let $z = e^{ix}$, then
\[
I_{r,t} = \int_0^{2\pi}\sin^{2t}\left(x\right)\cos\left(rx\right)\diff x = \int_{|z| = 1}\left(\frac{z-z^{-1}}{2i}\right)^{2t}\left(\frac{z^r+z^{-r}}{2}\right)\frac{1}{iz}\diff z 
\]
\[
= \frac{1}{2^{2t+1}i\,(-1)^{t}}\int_{|z| = 1}\frac{\left(z^2-1\right)^{2t}\left(z^{2r}+1\right)}{z^{2t+r+1}}\diff z .
\]
The final integral has a singularity at $z = 0$, therefore, $I_{r,t}$ simplifies to 
\[
I_{r,t} = \frac{1}{2^{2t+1}i\,(-1)^{t}}\,\,2\pi i \,\,\mathop{\mathrm{Res}}_{z = 0}\left(\frac{\left(z^2-1\right)^{2t}\left(z^{2r}+1\right)}{z^{2t+r+1}}\right) =  \frac{\pi}{2^{2t}\,(-1)^t} \,\,\mathop{\mathrm{Res}}_{z = 0}\left(\frac{\left(z^2-1\right)^{2t}\left(z^{2r}+1\right)}{z^{2t+r+1}}\right)
\]
Note that the residue term is equal to the coefficient of $z^{-1}$ in the Laurent series of the integrand around the singularity. Which here is equal to coefficient of $z^{2t+r}$ in expansion of $\left(z^2-1\right)^{2t}\left(z^{2r}+1\right)$. 
\[
\left(z^2-1\right)^{2t}\left(z^{2r}+1\right) = \sum_{j = 0}^{2t}\binom{2t}{j} z^{2j+2r}\left(-1\right)^{2t-j}+\sum_{j = 0}^{2t}\binom{2t}{j} z^{2j}\left(-1\right)^{2t-j}
\]
\[
= \sum_{j = r}^{2t+r}\binom{2t}{j-r} \left(-1\right)^{j-r}z^{2j}+\sum_{j = 0}^{2t}\binom{2t}{j} \left(-1\right)^{j}z^{2j}
\]
Now note that if $r$ is odd, $2t+r$ is odd, but the above series has no non-zero odd power of $z$. Therefore, the residue term would be zero, and $I_{r,t} = 0$ for odd $r$. For even $r$, the terms where $j = t + r/2$ are the desired ones in both summations. If $r > 2t$, we have $t + r/2 > 2t$ and $t + r/2 < r$, so there is not a $z^{2t+r}$ in any of the summations. But if $r \leq 2t$, the coefficient would be
\[
\binom{2t}{t-r/2} \left(-1\right)^{t -r/2}+\binom{2t}{t + r/2} \left(-1\right)^{t + r/2} = 2\binom{2t}{t-r/2} \left(-1\right)^{t +r/2}
\]
Therefore,
\[
I_{r,t} =  \begin{cases}
    \frac{\pi(-1)^{r/2}}{2^{2t-1}}\binom{2t}{t-r/2}\quad &\text{even} \; r \leq 2t\\
    0 \quad &\text{otherwise},
\end{cases}
\]
which results in
\[
A_{r,t} =  \begin{cases}
    \frac{1}{2^{2t}}\binom{2t}{t}\quad &r = 0\\
    \frac{(-1)^{r/2}}{2^{2t-1}}\binom{2t}{t-r/2}\quad &\text{even} \; 0<r \leq 2t\\
    0 \quad &\text{otherwise},
\end{cases}
\]

By using the above Fourier series we have 
\[\sum_{\text{even } m} \sin^{2t}\left(\omega_m/N\right) =\sum_{r= 0 }^t A_{2r,t} \sum_{\text{even } m} \cos\left(2r\omega_m/N\right). \]
When $N>k\geq r$, for $r>0$, $N$ does not divide $r$ and we can use the same lemma on sum of cosines of an arithmetic sequence to obtain
\[
\sum_{\text{even } m} \cos\left(2r\omega_m/N\right) = 
\begin{cases}
    \frac{\sin\left((N-1)r\pi/(2N)\right)}{\sin\left(r\pi/N\right)}\cdot\cos\left((N-1)r\pi/(2N)\right) \quad & \text{odd } N\\
    \frac{\sin\left(r\pi/2\right)}{\sin\left(r\pi/N\right)}\cdot\cos\left(r\pi/2\right) \quad & \text{even } N,
\end{cases}
\]
which simplifies to 
\[
\sum_{\text{even } m} \cos\left(2r\omega_m/N\right) = 
\begin{cases}
    -\frac{(-1)^{r}}{2} \quad & \text{odd } N\\
    0 \quad & \text{even } N
\end{cases}
\]
after using the double-angle and angle addition formulas for $\sin(x)$. For $r = 0$, we have \[
\sum_{\text{even } m} \cos\left(r\omega_m/N\right) = \left\lfloor\frac{N}{2}\right\rfloor
\]
For even $N>2t$,
\[
\sum_{\text{even } m} \sin^{2t}\left(\omega_m/N\right) = A_{0,t}\frac{N}{2} =  \frac{N}{2^{2t+1}}\binom{2t}{t}.
\]
For odd $N>2t$,
\begin{align*}
\sum_{\text{even } m} \sin^{2t}\left(\omega_m/N\right) &= A_{0,t}\frac{N-1}{2} + \sum_{r=1}^{t} A_{2r}\frac{(-1)^{r}}{2} =  \frac{N-1}{2^{2t+1}}\binom{2t}{t} - \sum_{r=1}^{t} \frac{1}{2^{2t}} \binom{2t}{t-r}\\
&= \frac{N-1}{2^{2t+1}}\binom{2t}{t} - \frac{1}{2^{2t}}\sum_{j = 0 }^{t-1}  \binom{2t}{j}\\
&= \frac{N-1}{2^{2t+1}}\binom{2t}{t} - \frac{1}{2^{2t+1}}\left(2^{2t}-\binom{2t}{t}\right)\\
&= \frac{N}{2^{2t+1}}\binom{2t}{t} -\frac{1}{2}
\end{align*}

Now note that the original summation
\(
\sum_{\text{even }m}\cos^2\left(\omega_m/N\right)\sin^{2t}\left(\omega_m/N\right)
\)
can be written as 
\[
\sum_{\text{even }m}\left(1-\sin^2\left(\omega_m/N\right)\right)\sin^{2t}\left(\omega_m/N\right)
= \sum_{\text{even }m}\sin^{2t}\left(\omega_m/N\right) - 
\sum_{\text{even }m}\sin^{2t+2}\left(\omega_m/N\right)
\] 
For even $N$,
\[
\sum_{\text{even }m}\cos^2\left(\omega_m/N\right)\sin^{2t}\left(\omega_m/N\right) = \frac{N}{2^{2t+1}}\binom{2t}{t} - \frac{N}{2^{2t+3}}\binom{2t+2}{t+1} = \frac{N}{2^{2t+2}(t+1)} \binom{2t}{t}
\]
If $N$ is odd,
\[
\sum_{\text{even }m}\cos^2\left(\omega_m/N\right)\sin^{2t}\left(\omega_m/N\right) = \frac{N}{2^{2t+1}}\binom{2t}{t}-\frac{1}{2} - \frac{N}{2^{2t+3}}\binom{2t+2}{t+1}+\frac{1}{2} = \frac{N}{2^{2t+2}(t+1)} \binom{2t}{t}
\]
So we proved that
\[
\sum_{\text{even }m}\cos^2\left(\omega_m/N\right)\sin^{2t}\left(\omega_m/N\right) = c_t N
\]
where
\[
c_t = \frac{1}{2^{2t+2}(t+1)} \binom{2t}{t}
\]

Combining all of these terms, we obtain
\begin{align*}
\mathrm{tr }(T_p^k\,M) &= \alpha_1 - \sum_{\text{even }m} \frac{\cos^2\left(\omega_m/N\right)}{N\sin^4\left(\omega_m/N\right)} + \frac{4kp}{N}\sum_{\text{even }m} \frac{\cos^2\left(\omega_m/N\right)}{\sin^2\left(\omega_m/N\right)}\\
&\quad -\sum_{t=2}^k\binom{k}{t}\frac{(-4p)^t}{N}\sum_{\text{even }m}\cos^2\left(\omega_m/N\right)\sin^{2t-4}\left(\omega_m/N\right)\\
&= 0 + \frac{4kp}{N}\cdot\frac{N(N-1)}{2}-\sum_{t=2}^k\binom{k}{t}\frac{(-4p)^t}{N}\cdot c_{t-2} N\\
&= 2(N-1)kp -4\sum_{t=2}^k\frac{(-1)^t}{t-1}\binom{k}{t} \binom{2t-4}{t-2}p^t 
\end{align*}

\begin{lemma}
    For non-negative integers $k$, $l$, and $N> k + l$
    \[
\mathrm{tr }(T_p^kT_q^l\,M) =2(N-1)(kp+lq)  -\sum_{d=2}^{k+l}\sum_{t=\max(0,d-l)}^{\min(d,k)}(-1)^{d}\frac{4}{d-1} \binom{2d-4}{d-2}\binom{k}{t}\binom{l}{d-t}p^tq^{d-t}
\]
\end{lemma}
\begin{proof}
Proof of this lemma is similar to the previous one. 
\begin{align*}
\mathrm{tr }(T_p^kT_q^l\,M) &= 
\mathrm{tr} \left(  U \Sigma_p^k U^{\top}U \Sigma_q^l U^{\top} M \right)
= \mathrm{tr} \left(   \Sigma_p^k \Sigma_q^l U^{\top} M U \right)\\
&= \sum_{m=1}^{N} \alpha_m\,\bigl(\lambda^{(p)}_m\bigr)^k\bigl(\lambda^{(q)}_m\bigr)^l\\
&=\alpha_1 +\sum_{\text{even }m} \alpha_m\,\bigl(\lambda^{(p)}_m\bigr)^k\bigl(\lambda^{(q)}_m\bigr)^l\\
&= \alpha_1 -\sum_{\text{even }m} \frac{\cos^2\left(\omega_m/N\right)}{N\sin^4\left(\omega_m/N\right)}\,\left(1-4p\sin^2\left(\frac{\omega_m}{N}\right)\right)^k \left(1-4q\sin^2\left(\frac{\omega_m}{N}\right)\right)^l\\
&= \alpha_1 -\sum_{t=0}^k\sum_{s=0}^l\sum_{\text{even }m} \frac{\cos^2\left(\omega_m/N\right)}{N\sin^4\left(\omega_m/N\right)}\binom{k}{t}\binom{l}{s}\,\left(-4p\sin^2\left(\frac{\omega_m}{N}\right)\right)^t \left(-4q\sin^2\left(\frac{\omega_m}{N}\right)\right)^s\\
&= \alpha_1 -\sum_{t=0}^k\sum_{s=0}^l\frac{(-4p)^t(-4q)^s}{N}\binom{k}{t}\binom{l}{s}\sum_{\text{even }m}\cos^2\left(\frac{\omega_m}{N}\right)\sin^{2(t+s)-4}\left(\frac{\omega_m}{N}\right)\\
&= \alpha_1 -\sum_{d=0}^{k+l}\sum_{t=\max(0,d-l)}^{\min(d,k)}\frac{(-4p)^t(-4q)^{d-t}}{N}\binom{k}{t}\binom{l}{d-t}\sum_{\text{even }m}\cos^2\left(\frac{\omega_m}{N}\right)\sin^{2d-4}\left(\frac{\omega_m}{N}\right)\\
&= \underbrace{\alpha_1 -\frac{1}{N}\sum_{\text{even }m}\cos^2\left(\frac{\omega_m}{N}\right)\sin^{-4}\left(\frac{\omega_m}{N}\right)}_{=0}\\
&\quad +\frac{4pk+4ql}{N}\underbrace{\sum_{\text{even }m}\cos^2\left(\frac{\omega_m}{N}\right)\sin^{-2}\left(\frac{\omega_m}{N}\right)}_{=N(N-1)/2}\\
&\quad -\sum_{d=2}^{k+l}\sum_{t=\max(0,d-l)}^{\min(d,k)}\frac{(-4)^{d}p^tq^{d-t}}{N}\binom{k}{t}\binom{l}{d-t}\underbrace{\sum_{\text{even }m}\cos^2\left(\frac{\omega_m}{N}\right)\sin^{2d-4}\left(\frac{\omega_m}{N}\right)}_{=c_{d-2}N}.
\end{align*}
Therefore,
\begin{align*}
\mathrm{tr }(T_p^kT_q^l\,M) &= 2(N-1)(kp+lq) -\sum_{d=2}^{k+l}\sum_{t=\max(0,d-l)}^{\min(d,k)}(-4)^{d}c_{d-2}\binom{k}{t}\binom{l}{d-t}p^tq^{d-t}\\
&= 2(N-1)(kp+lq)  -\sum_{d=2}^{k+l}\sum_{t=\max(0,d-l)}^{\min(d,k)}(-1)^{d}\frac{4}{d-1} \binom{2d-4}{d-2}\binom{k}{t}\binom{l}{d-t}p^tq^{d-t}
\end{align*}
\end{proof}

Let us consider the behavior as $N \to \infty$. For fixed $m$, since $k = |i-j| \leq m$ the term $$
-4\sum_{t=2}^k\frac{(-1)^t}{t-1}\binom{k}{t} \binom{2t-4}{t-2}p^t $$
is upper bounded by a constant determined by $m$, denoted as $C_m$. Thus as $N$ grows this term is negligible. The same holds for $ -\sum_{d=2}^{k+l}\sum_{t=\max(0,d-l)}^{\min(d,k)}(-1)^{d}\frac{4}{d-1} \binom{2d-4}{d-2}\binom{k}{t}\binom{l}{d-t}p^tq^{d-t}$ since $k + l \leq 2m$. Thus we have when $t^*_m/m = t^*$, for all $i,j\in [m]$:
$$
\frac{N(N-1)}{2c_A^2m}\left(\mathcal{D}^{(2)}_{d_{MV}}\right)_{i,j} - \left(\mathcal{D}_Z\right)_{i,j} = 
\frac{N(N-1)}{2c_A^2m}\left(\mathcal{D}_{d^2_{MV}}\right)_{i,j} - \left(\mathcal{D}_Z\right)_{i,j} 
\leq \frac{C_m}{2m(N-1)} \sim O\left(\frac{1}{N}\right), 
$$ simply denote $D' := \frac{N(N-1)}{2c_A^2m}\left(\mathcal{D^A}^{(2)}_{d_{MV}}\right) = \frac{N(N-1)}{2c_A^2m}\left(\mathcal{D^A}_{d^2_{MV}}\right)$, we have
$$
\left(D'\right)_{i,j} - \left(\mathcal{D}_Z\right)_{i,j} \sim O\left(\frac{1}{N}\right),
$$
since every entry of both matrices are bounded by constant we also have
$$
\left(D'^{(2)}\right)_{i,j} - \left(\mathcal{D}^{(2)}_Z\right)_{i,j} \sim O\left(\frac{1}{N}\right).
$$

Let us denote the eigenvalues and the corresponding orthonormal eigenvectors of $-\frac{1}{2}H\mathcal{D}^{(2)}_ZH$ and $-\frac{1}{2}HD'^{(2)}H$ respectively by $\lambda_1, \lambda_2, \cdots, \lambda_m$ and $U_1,U_2,\cdots,U_m$; $\tilde{\lambda}_1, \tilde{\lambda}_2,\cdots, \tilde{\lambda}_m$ and $\tilde{U}_1,\tilde{U}_2,\cdots,\tilde{U}_m$.  
Note that for fixed $m$, both $\psi^1_{d^2_{MV}}$ and $\psi_Z$ are $m \times 1$ vectors as the first dimension of CMDS result of $\mathcal{D}_{d^2_{MV}}$ and $\mathcal{D}_Z$, and because $D' = \frac{N(N-1)}{2c_A^2m} \mathcal{D}_{d^2_{MV}}$,
we have:
$$
\frac{N(N-1)}{2c_A^2m}\psi^1_{d^2_{MV}} = \sqrt{\tilde{\lambda}_1} \tilde{U}_1, ~~~ \psi_Z = \sqrt{\lambda_1} U_1.
$$
Now we only need to show that: 
$$
\max_{i \in [m]}\bigg| \frac{N(N-1)}{2c_A^2m}\psi^1_{d^2_{MV}}(t_i) -  \psi_Z(t_i) \bigg| = \bigg\| \sqrt{\tilde{\lambda}_1} \tilde{U}_1 - \sqrt{\lambda_1} U_1\bigg\|_{2 \to \infty} \sim O\left(\frac{1}{N}\right).
$$
To show this we follow the proof in \cite{chen2024euclidean} Theorem 2.26. Note here $m$ is not growing as in \cite{chen2024euclidean} and it is treated as a constant. Denote  double centering matrix $H \in \RR^{m \times m} : = I -\frac{J}{m}$ where $I$ is the order $m$ identity matrix and $J$ is $m\times m$ all one matrix. Here we state several properties about $D'$ and $\mathcal{D}_Z$:
\begin{itemize}
    \item $\lambda_1 \sim \Theta(1)$ and $\lambda_i =0$ for $i \geq 2$; $ \left\Vert U_1 \right\Vert_{2 \to \infty} \sim \Theta(1)$.
    \item $\|D' -\mathcal{D}_Z\|_{\infty} \sim O\left(\frac{1}{N}\right)$, and $\|D'^{(2)} -\mathcal{D}_Z^{(2)}\|_{\infty} \sim O\left(\frac{1}{N}\right)$, $\|HD'^{(2)}H -H\mathcal{D}_Z^{(2)}H\|_{\infty} \sim O\left(\frac{1}{N}\right)$.
    \item $\left|\tilde{\lambda}_1-\lambda_1\right| \sim O\left(\frac{1}{N}\right)$,$\tilde{\lambda}_1 \sim \Theta(1)$, $\left|\sqrt{\tilde{\lambda}_1} -\sqrt{\lambda_1}\right| = \frac{\left|\tilde{\lambda}_1-\lambda_1\right|}{\sqrt{\tilde{\lambda}_1}+\sqrt{\lambda_1}} \sim \frac{O\left(\frac{1}{N}\right)}{\Theta\left(1\right)}  \sim O\left(\frac{1}{N}\right).$
\end{itemize}

Then because $\lambda_1 \sim \Theta(1)$ and  $\|HD'^{(2)}H -H\mathcal{D}_Z^{(2)}H\|_{\infty} \sim O\left(\frac{1}{N}\right)$, once $N$ is large enough, we will have $|\lambda_1|>4\left\Vert H\left(\mathcal{D}^{(2)}_{\varphi_m}-\mathcal{D}^{(2)}_Z\right)H\right\Vert_{\infty}$ and we can use Theorem 4.2 in \cite{cape2019two}. Thus there exists a $w=1$ or $w=-1$ such that:
\begin{align*}
&\left\Vert \sqrt{\tilde{\lambda}_1}\tilde{U}_1-w\sqrt{\lambda_1}U_1  \right\Vert_{2 \to \infty}  \leq \sqrt{ \tilde{\lambda}_1} \left\Vert \tilde{U}_1-wU_1\right\Vert_{2 \to \infty} + \left|\sqrt{\tilde{\lambda}_1} -\sqrt{\lambda_1}\right|\left\Vert U_1 \right\Vert_{2 \to \infty}  \\
&\leq  \sqrt{ \tilde{\lambda}_1}\frac{c\|HD'^{(2)}H -H\mathcal{D}_Z^{(2)}H\|_{\infty} \left\Vert U_1 \right\Vert_{2 \to \infty} }{\lambda_1}
+\left|\sqrt{\tilde{\lambda}_1} -\sqrt{\lambda_1}\right|\left\Vert U_1 \right\Vert_{2 \to \infty} \\
& \leq \left( \frac{ c \sqrt{ \tilde{\lambda}_1} \|HD'^{(2)}H -H\mathcal{D}_Z^{(2)}H\|_{\infty}}{\lambda_1}+\left|\sqrt{\tilde{\lambda}_1} -\sqrt{\lambda_1}\right| \right) \left\Vert U_1 \right\Vert_{2 \to \infty} \sim O\left(\frac{1}{N}\right). 
\end{align*}
Thus the result follows.

Now we consider CMDS on $\mathcal{D^A}_{\text{ind-}d_{MV}}$. Denote $a : = \frac{c_A^2}{6}\frac{N+1}{N-1}$, we note that $\mathcal{D^A}^{(2)}_{\text{ind-}d_{MV}} = a J - aI$ where $I$ is $m \times m$ identity matrix and $J$ is $m \times m$ all one matrix, then 
$$
-\frac{1}{2}H\mathcal{D^A}^{(2)}_{\text{ind-}d_{MV}}H = \frac{a}{2}H(I - J  )H = \frac{a}{2}\left( I - \frac{J}{m} \right)\left(I - J  \right) \left( I - \frac{J}{m} \right)    = \frac{a}{2}H.
$$
Note matrix $H= I - \frac{J}{m}$ is idempotent and has eigenvalues $1$ and $0$ with geometric multiplicity $m-1$ and $1$ respectively. Thus eigenvalues of $\frac{a}{2}H$ are $a/2 > 0$ and $0$ and the first $m-1$ eigenvectors correspond to the eigenvalue $a/2$. Then any orthonormal basis of $\RR^{m-1}$ can serve as these eigenvectors, denoted as $U_{m-1}$. The results follow.

For $ \mathcal{D^A}_{\alpha -d_{MV}}^{(2)}$. Denote $B^{\mathcal{A}}_{d_{MV}} = -\frac{1}{2} H \mathcal{D^A}_{d_{MV}}^{(2)}  H$. Thus 
$$
-\frac{1}{2} H \mathcal{D^A}_{\alpha -d_{MV}}^{(2)} H = 
-\frac{1}{2} H \left( \alpha \mathcal{D^A}_{\text{ind-}d_{MV}}^{(2)} +(1-\alpha) \mathcal{D^A}_{d_{MV}}^{(2)}  \right) H
= (1-\alpha)B^{\mathcal{A}}_{d_{MV}}  + \frac{\alpha a }{2}H.
$$ 
Now denote the eigenvalues of $B^{\mathcal{A}}_{d_{MV}}$ as $\lambda_1>\lambda_2>\cdots>\lambda_m = 0$ and $U_1$,\ldots,$U_m$. Then for any orthonormal eigenvector of $B^{\mathcal{A}}_{d_{MV}}$ that is orthogonal to $\mathbf{1}$: $U_i$ such that $U^{\top}_i\mathbf{1} = 0$ because $B^{\mathcal{A}}_{d_{MV}} U_i  = \lambda_i U_i$ then
$$
\left(-\frac{1}{2} H \mathcal{D^A}_{\alpha -d_{MV}}^{(2)} H \right)U_i = \left( (1- \alpha) B^{\mathcal{A}}_{d_{MV}} + \frac{a\alpha}{2}H \right) U_i = \left((1-\alpha)\lambda_i + \frac{a \alpha}{2}\right)U_i.
$$ 
Those $U_i$ that are  orthogonal to $\mathbf{1}$ are also eigenvectors of $-\frac{1}{2} H \mathcal{D^A}_{\alpha -d_{MV}}^{(2)} H $ and correspond to the eigenvalue $(1-\alpha)\lambda_i + a \alpha/2$. So when $\lambda_1$ is the most positive eigenvalue of $B_{d_{\mathrm{MV}}}^{\mathcal{A}}$, $(1-\alpha)\lambda_1 + \frac{a \alpha}{2}$ will be the most positive eigenvalue of $-\frac{1}{2}H\mathcal{D}^{\mathcal{A}^{(2)}}_{\alpha-d_{\mathrm{MV}}}H.$ Thus 
$$
\psi^A_{d_{MV}} = \sqrt{\lambda_1} U_1, ~~ \psi^A_{\alpha -d_{MV}} = \sqrt{ (1-\alpha)\lambda_1 + \frac{a \alpha}{2} } U_1.
$$

For the $W_1$ distance, we know $X^A_{t_i} \sim u$ for all $i \in[m]$, thus $W_1(X^A_{t_i},X^A_{t_j}) = 0$ for all $i,j \in [m]$. 

For the expected average degree, since in Atlanta model, for any $i\in[m]$: $\EE[X_{t_i}] = \EE [u] = \frac{c_A}{2}$, the results follow. 

\end{proof}

\subsection{Proof of Theorem \ref{thm:W1_continuous}}

\begin{proof}
Suppose 
\begin{equation}
\label{eq:wmin2inf}
\min_W \|\hat{\bX}-\bX W\|_{2\rightarrow\infty}\leq \epsilon, \min_W \|\hat{\bY}-\bY W\|_{2\rightarrow\infty}\leq \epsilon,
\end{equation}
and recall that 
$$\procwphat[1](\bX,\bY)=\min_W \min_\sigma \frac{1}{n}\sum_{i=1}^n \|X_i-W Y_{\sigma(i)}\|.$$
We will show that
$$ |\procwphat[1](\hat{\bX},\hat{\bY})-\procwphat[1](\bX,\bY)|\leq 2\epsilon.$$
Let $\sigma:[n]\rightarrow[n]$ be the optimal permutation and $W_{XY}$ be the optimal orthogonal matrix for $\bX,\bY$, so that
$$\procwphat[1](\bX,\bY)=\frac{1}{n}\sum_{i=1}^n \|X_i-W_{XY}Y_{\sigma(i)}\|.$$
Then since this permutation is one possible choice of alignment for $\hat{\bX},\hat{\bY}$, but may not be optimal, we have
\begin{align*}
\procwphat[1](\hat{\bX},\hat{\bY})&=\min_{\sigma'}\min_{W} \frac{1}{n}\sum_{i=1}^n \|\hat{X}_i-W\hat{Y}_{\sigma'(i)}\|\\
&\leq \min_W \frac{1}{n}\sum_{i=1}^n \|\hat{X}_i-W\hat{Y}_{\sigma(i)}\|\\
&\leq \min_{W,W_1,W_2} \frac{1}{n}\sum_{i=1}^n \left(\|\hat{X}_i-W_1X_i\|+\|W_1 X_i-W_2Y_{\sigma(i)}\|+\|W_2Y_{\sigma(i)}-W\hat{Y}_{\sigma(i)}\|\right).
\end{align*}
Let $W_1'$ achieve $\|\hat{\bX}-\bX W_1'\|_{2\rightarrow\infty}\leq \epsilon$, as guaranteed to exist by Equation~\ref{eq:wmin2inf}. Then we select $W_2'=(W_1')^\top W_{XY}$, and $W'=W_2' (W_3')^\top $, where $W_3'$ achieves $\|\hat{\bY}-\bY W_3'\|_{2\rightarrow\infty}\leq \epsilon$ (again by Equation~\ref{eq:wmin2inf}). Since these are three particular choices of orthogonal matrices, we have
\begin{align*}
\procwphat[1](\hat{\bX},\hat{\bY})&\leq \frac{1}{n}\sum_{i=1}^n \left(\|\hat{\bX}-\bX W_1'\|_{2\rightarrow\infty}+\|(W_1')^\top(X_i-W_{XY} Y_{\sigma(i)})\|+\|(\bY-\hat{\bY}W_3')(W_2')^\top\|_{2\rightarrow\infty}\right)\\
&\leq \procwphat[1](\bX,\bY)+2\epsilon.
\end{align*}
For any $\sigma$ and $W$, after choosing $W_1$, $W_2$ in light of Equation~\ref{eq:wmin2inf}, we have
\begin{align*}
\frac{1}{n}\sum_{i=1}^n \|\hat{X}_i-W\hat{Y}_{\sigma(i)}\|&\geq \frac{1}{n}\sum_{i=1}^n \left(\|W_1 X_i-WW_2Y_{\sigma(i)}\|-\|\hat{X}_i-W_1X_i\|-\|W(W_2Y_{\sigma(i)}-\hat{Y}_{\sigma(i)})\|\right)\\
&\geq \frac{1}{n}\sum_{i=1}^n \|W_1X_i-WW_2Y_{\sigma(i)}\|-2\epsilon\\
&\geq \procwphat[1](\bX,\bY)-2\epsilon.
\end{align*}
Now minimizing over $\sigma$ and $W$ gives 
$$
\procwphat[1](\hat{\bX},\hat{\bY})\geq \procwphat[1](\bX,\bY)-2\epsilon.
$$
\end{proof}

\subsection{Proof of Theorem \ref{thm:W1_as_converge}}
We will make use of the following lemma: 
\begin{lemma}
\label{lem:supsclose}
For a metric space $(\mathcal{M},d)$ and functions $L_1,L_2: \mathcal{M}\rightarrow\RR, $ if $|L_1(x)-L_2(x)|\leq \epsilon$ for all $x\in\mathcal{M}$, then 
$$\left| \sup_{x\in\mathcal{M}} L_1(x) - \sup_{y\in\mathcal{M}} L_2(y)\right|\leq \epsilon.$$
\end{lemma}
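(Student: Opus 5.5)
To prove Lemma~\ref{lem:supsclose}, I would use a two-sided comparison argument that reduces directly to the pointwise hypothesis $|L_1(x)-L_2(x)|\le\epsilon$. No metric structure on $\mathcal{M}$ is needed; the statement is purely order-theoretic.

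First, I handle the possibly infinite suprema. If $\sup L_1=+\infty$, then for every $x$ we have $L_2(x)\ge L_1(x)-\epsilon$, so $\sup L_2=+\infty$ as well, and symmetrically. Under the convention that the difference of two infinite suprema is not considered, or in the application where $\mathcal{M}$ is compact or the functions are bounded, both suprema are finite. I would state this reduction explicitly and then assume both suprema are finite real numbers.

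Next comes the main inequality. For every $x\in\mathcal{M}$,
\[
L_1(x)\le L_2(x)+\epsilon\le \sup_{y\in\mathcal{M}}L_2(y)+\epsilon .
\]
Taking the supremum over $x$ on the left gives $\sup L_1\le \sup L_2+\epsilon$. Exchanging the roles of $L_1$ and $L_2$ gives $\sup L_2\le\sup L_1+\epsilon$. Together these yield $\bigl|\sup L_1-\sup L_2\bigr|\le\epsilon$.

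There is no real obstacle in this proof. The only point needing care is that the supremum need not be attained, which is why I bound $L_2(x)$ by $\sup L_2$ rather than choosing maximizers; if one preferred maximizers, an $\eta$-approximate maximizer followed by $\eta\to0$ would do. The same argument applied to $-L_1,-L_2$ gives the analogous statement for infima, which is the form needed later for minimizing over $W$ and couplings in $\procwphat[1]$.
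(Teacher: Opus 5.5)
Your proof is correct and is essentially the paper's argument. The paper takes a $\delta$-approximate maximizer $x^*$ of $L_1$, derives $\sup L_2 \ge L_1(x^*)-\epsilon \ge \sup L_1-\epsilon-\delta$, and lets $\delta\to 0$; you instead bound $L_1(x)\le \sup L_2+\epsilon$ pointwise and take the supremum, which skips the limiting step, and both conclude by symmetry. Your remarks on infinite suprema and on the infimum version are sensible additions, since the paper applies the lemma to the minimum over $W$ without comment.
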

\begin{proof}
Indeed, let $\delta>0$, and let $x^*\in\mathcal{M}$ satisfy $L_1(x^*) \geq \sup_{x\in\mathcal{M}} L_1(x)-\delta$. Then 
\begin{align*}
\sup_{y\in \mathcal{M}} L_2(y) & \geq L_2(x^*)\\
&\geq L_1(x^*)-\epsilon\\
&\geq \sup_{x\in \mathcal{M}} L_1(x) - \epsilon - \delta.
\end{align*}
Since the choice of $\delta>0$ was arbitrary, we have
$$ \sup_{y\in \mathcal{M}} L_2(y) \geq \sup_{x\in \mathcal{M}}L_1(x)-\epsilon.$$ Now observe that the assumptions on $L_1$ and $L_2$ are symmetric, so we must also have 
$$ \sup_{x\in \mathcal{M}} L_1(x) \geq \sup_{y\in \mathcal{M}} L_2(y) - \epsilon,$$ which completes the proof of the lemma.
\end{proof}

In \cite{chewi2025statistical}, the following is Lemma 2.7:
\begin{lemma}
\label{lem:f1coveringnumber}
    Let $\mathcal{F}_1$ be the collection of 1-Lipschitz functions from $[0,1]^d\rightarrow\RR$ with $f(0)=0$. Then $$\mathcal{N}(\epsilon,\mathcal{F}_1,\|\cdot\|_{\infty})\leq \exp\left[C\left(\frac{4\sqrt{d}}{\epsilon}\right)^d\right].$$
\end{lemma}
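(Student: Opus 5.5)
The plan is the classical Kolmogorov--Tikhomirov discretization argument: replace each $f\in\mathcal{F}_1$ by a piecewise-constant function on a grid of cubes, with values in a lattice. The Lipschitz constraint then forces neighbouring lattice values to differ by only a bounded number of steps, and counting these discretizations gives the bound. We may assume $\epsilon\le 4\sqrt d$. Otherwise every $f\in\mathcal{F}_1$ satisfies $\|f\|_\infty\le \operatorname{diam}[0,1]^d=\sqrt d<\epsilon$, so the single function $0$ is an $\epsilon$-cover and the bound is trivial.

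\textbf{Grid and rounding.} Set $\delta=\epsilon/(4\sqrt d)$ and partition $[0,1]^d$ into $K=\lceil 1/\delta\rceil^d$ cubes $Q_1,\dots,Q_K$ of side at most $\delta$, hence diameter at most $\epsilon/4$. Order the cubes along a boustrophedon (snake) path, so that consecutive cubes $Q_j,Q_{j+1}$ share a face and $Q_1$ contains the origin. Let $z_j$ be the center of $Q_j$, and set $\eta=\epsilon/4$. For $f\in\mathcal{F}_1$, let $a_j(f)$ be the multiple of $\eta$ nearest to $f(z_j)$, and define the step function $g_f=\sum_j a_j(f)\mathbf 1_{Q_j}$. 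For $x\in Q_j$,
\[
|f(x)-g_f(x)|\le |f(x)-f(z_j)|+|f(z_j)-a_j(f)|\le \epsilon/4+\eta/2<\epsilon ,
\]
so $\{g_f:f\in\mathcal{F}_1\}$ is an $\epsilon$-net. If one insists that centers lie in $\mathcal{F}_1$, we run the argument at scale $\epsilon/2$ and pick one $f$ per class, at the cost of changing $C$.

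\textbf{Counting.} The sequence $(a_1,\dots,a_K)$ determines $g_f$, so it suffices to count the sequences that can occur. Since $f(0)=0$ and $|z_1|\le\epsilon/8$, we get $|f(z_1)|\le \epsilon/8$, so $a_1$ takes at most $3$ values. For consecutive cubes, $|z_{j+1}-z_j|=\delta\le\epsilon/4=\eta$, so $|f(z_{j+1})-f(z_j)|\le\eta$ and hence $|a_{j+1}-a_j|\le 2\eta$. Given $a_j$, there are therefore at most $5$ choices for $a_{j+1}$. Consequently
\[
\mathcal N(\epsilon,\mathcal F_1,\|\cdot\|_\infty)\le 3\cdot 5^{K-1}\le \exp\bigl(C_0 K\bigr).
\]

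\textbf{Main obstacle: bookkeeping in $K$.} It remains to bound $K=\lceil 4\sqrt d/\epsilon\rceil^d$ by a constant multiple of $(4\sqrt d/\epsilon)^d$. Since $\epsilon\le 4\sqrt d$, we have $\lceil x\rceil\le 2x$ for $x=4\sqrt d/\epsilon$, which gives $K\le 2^d(4\sqrt d/\epsilon)^d$. This yields the stated form with $C=2^dC_0$, a constant depending only on $d$, which is all that is used in the proof of Theorem~\ref{thm:W1_as_converge}, where $d$ is fixed.

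For a dimension-free $C$, I would first try to absorb the ceiling into the mesh instead. One option is to take $\delta=1/\lceil 4\sqrt d/\epsilon\rceil$ directly, so that $K=\lceil 4\sqrt d/\epsilon\rceil^d$, and use a finer rounding lattice $\eta=\epsilon/8$ to compensate in the error bound. The rounding changes only the per-step choice count $5$ into another absolute constant. The remaining factor $\lceil x\rceil^d/x^d$ must then be controlled separately, for example in the regime $x\ge d$, where $(1+1/x)^d\le e$. I expect this constant-tracking to be the only delicate point; the covering construction itself is routine.
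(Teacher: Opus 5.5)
Your discretization argument is correct. It is the standard Kolmogorov--Tikhomirov chain-of-cubes count, and every step checks: $a_1$ has at most $3$ values, each increment has at most $5$, and $\|f-g_f\|_\infty\le 3\epsilon/8$. The paper does not prove this lemma at all; it quotes it from \cite{chewi2025statistical}. Your argument is the usual proof behind such bounds, so there is no competing route to compare.

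The one point to tighten is the constant. Since the bound writes the $d$-dependence explicitly as $(4\sqrt d/\epsilon)^d$, the natural reading is that $C$ is universal, but your main argument gives $C=2^d\log 5$. That suffices for Theorem~\ref{thm:W1_as_converge}, where $d$ is fixed and the constant is absorbed into $C_d$.

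Your suggested repair for a dimension-free constant does not work. With $x=4\sqrt d/\epsilon$, any mesh $1/\lceil x\rceil$ leaves the factor $(\lceil x\rceil/x)^d$. For $4\le x<d$, that is for $4/\sqrt d<\epsilon\le\sqrt d$ where the cover is nontrivial, this factor can be about $(5/4)^d$. The regime $x\ge d$ you mention does not reach this range.

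The fix is to round down instead, using the slack in your error budget:
\begin{itemize}
\item Take $k=\lfloor x\rfloor\ge x/2$ cubes per axis, so each side is $1/k\le \epsilon/(2\sqrt d)$, and keep $\eta=\epsilon/4$.
\item Then $\|f-g_f\|_\infty\le \sqrt d/(2k)+\eta/2\le 3\epsilon/8$.
\item Since $|f(z_1)|\le \sqrt d/(2k)\le\eta$, $a_1$ still has at most $3$ values.
\item Adjacent centers are at distance $1/k\le\epsilon/2=2\eta$, so $|a_{j+1}-a_j|\le 3\eta$, giving $7$ choices per step.
\end{itemize}
Hence $\mathcal N(\epsilon,\mathcal F_1,\|\cdot\|_\infty)\le 7^{k^d}\le\exp\bigl(\log 7\,(4\sqrt d/\epsilon)^d\bigr)$ for every $\epsilon\le 4\sqrt d$, with an absolute constant. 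Your trivial case covers $\epsilon>4\sqrt d$.
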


\begin{proof}
For $n\geq 1$, $f\in \mathcal{F}_1$, and some real orthogonal matrix $W$, let
\begin{align*}
L_{1,n}(f,W)&= \EE_{x\sim \mu_\bX} f(x) - \EE_{y\sim \mu_\bY} f(Wy),\\
L_2(f,W) &= \EE_{x\sim \mu} f(x) - \EE_{y\sim \nu} f(Wy).
\end{align*}
Recall that by the Kantorovich dual formulation of the Wasserstein-1 problem, 
\begin{align*}
\procwphat[1](\bX,\bY)&=\procwp[1](\mu_{\bX},\mu_{\bY})=\min_W W_1(\mu_\bX,\mu_{\bY W^\top})= \min_W \sup_{f\in\mathcal{F}_1} L_{1,n}(f,W),\\
\procwp[1](\mu_X,\mu_Y)&=\min_W W_1(\mu_X,\mu_{Y}\circ W^\top)=\min_W\sup_{f\in\mathcal{F}_1}L_2(f,W).
\end{align*}
Since $L_{1,n}(f,W)=L_{1,n}(f+c,W)$ and $L_2(f,W)=L_2(f+c,W)$ for any constant $c$, there is no loss of generality in taking $f(0)=0$. On a set of full measure $\Omega_{f,W}$, $L_{1,n}(f,W)\rightarrow L_2(f,W)$ as $n\rightarrow\infty$ by the ordinary strong law of large numbers, since all $X_i\overset{\text{iid}}{\sim} \mu_X$, $Y_j\overset{\text{iid}}{\sim} \mu_Y$, and the Lipschitz condition on $f$, plus the compactness of $B_d$, are sufficient to guarantee that $\EE_{x\sim \mu} f(x), \EE_{y\sim \nu} f(Wy)$ are finite. We want to show that there is a set of full measure on which this convergence takes place uniformly in $(f,W)$, which we will accomplish using a covering for $\mathcal{F}_1\times \mathcal{O}^{d}$. 

Consider a covering for $\mathcal{F}_1\times \mathcal{O}^d$ of the form $\bigcup_{i=1}^N B((f_i,W_i),\epsilon)$, where $f_i \in\mathcal{F}_1$, $W_i\in \mathcal{O}^d$, and $B((f_i,W_i), \epsilon) = \{(g,W)\in \mathcal{F}_1\times \mathcal{O}^d: 2\|g-f_i\|_{\infty}+\|W-W_i\| \leq \epsilon\}.$ The existence of such a covering is guaranteed by compactness of $\mathcal{F}_1\times \mathcal{O}^d$, where the compactness of $\mathcal{F}_1$ is supplied by Lemma~\ref{lem:f1coveringnumber}. The open cover $\{B((f,W),\epsilon); (f,W)\in\mathcal{F}_1\times \mathcal{O}^d\}$ then has a finite subcover by compactness, which gives a covering of the desired form. Then for any $n$, $(g,R)\in B((f,W),\epsilon)$, and $\{x_i\}_{i=1}^n, \{y_i\}_{i=1}^n\subset B_d,$ we have
\begin{align*}
|L_{1,n}(g,R)-L_{1,n}(f,W)|&=\left|\frac{1}{n}\sum_{i=1}^n [g(x_i)-g(Ry_i)] - \frac{1}{n}\sum_{j=1}^n [f(x_j)-f(Wy_j)]\right|\\
&\leq \frac{1}{n}\sum_{i=1}^n|g(x_i)-f(x_i)|+|g(Ry_i)-g(Wy_i)|+|g(Wy_i)-f(Wy_i)|\\
&\leq \frac{1}{n}\sum_{i=1}^n(2\|g-f\|_{\infty}+\|Ry_i-Wy_i\|)\\
&\leq 2\|g-f\|_{\infty}+\|R-W\|\leq \epsilon,
\end{align*}
and $|L_2(g,R)-L_2(f,W)|\leq \epsilon$ by a similar argument. So for a given $\epsilon>0$, select a covering for $\mathcal{F}_1\times \mathcal{O}^d$ of the given type. For each $1\leq i\leq N(\epsilon)$, let $\Omega_i$ be the set of full measure for which $L_{1,n}(f_i,W_i)\rightarrow L_2(f_i,W_i)$ as $n\rightarrow\infty$. Then for each $i$, for any $\omega\in \Omega_i$, there is a sufficiently large $K_i=K_i(\epsilon,\omega)$ such that for all $k\geq K_i$, $|L_{1,k}(f_i,W_i)-L_2(f_i,W_i)|\leq \epsilon.$ So for any $\omega\in \Omega(\epsilon):=\bigcap_{i=1}^N \Omega_i$ and $k\geq K(\epsilon):=\max_{1\leq i\leq N} K_i(\epsilon,\omega)$, we have for $(g,R)\in B((f_i,W_i),\epsilon)$:
$$|L_{1,k}(g,R)-L_2(g,R)| \leq |L_{1,k}(g,R)-L_{1,k}(f_i,W_i)|+|L_{1,k}(f_i,W_i)-L_2(f_i,W_i)|+|L_2(f_i,W_i)-L_2(g,R)| \leq 3\epsilon. $$
But since $\{B((f_i,W_i),\epsilon)\}$ covers $\mathcal{F}_1\times \mathcal{O}^d$, this bound holds for all $(g,R)\in \mathcal{F}_1\times \mathcal{O}^d$. Put briefly, for $\omega\in\Omega(\epsilon)$, once $k\geq K(\epsilon)$, every $(g,R)\in\mathcal{F}_1\times \mathcal{O}^d$ satisfies $$|L_{1,k}(g,R)-L_2(g,R)| \leq 3\epsilon.$$ Consider the sequence $\epsilon_m=1/m$, and define $\Omega:= \bigcap_{m=1}^{\infty}\Omega(1/m)$, which is also a set of full measure. Then for any $\omega\in \Omega$ and $\epsilon>0$, letting $m>1/\epsilon$, $\omega \in \Omega(1/m)$, so there exists $K(1/m)$ so large that for any $k\geq K(1/m)$, every $(g,R)\in\mathcal{F}_1\times \mathcal{O}^d$ satisfies $$|L_{1,k}(g,R)-L_2(g,R)|\leq 3/m \leq 3\epsilon.$$ We apply Lemma~\ref{lem:supsclose} to get 
$$ \left|\sup_{f\in\mathcal{F}_1}L_{1,n}(f,W)-\sup_{g\in\mathcal{F}_1}L_2(g,W)\right|\leq 3\epsilon,$$ and apply Lemma~\ref{lem:supsclose} again to get
$$|\procwphat[1](\bX,\bY)-\procwp[1](\mu_X,\mu_Y)|=\left|\inf_W \sup_{f\in \mathcal{F}_1} f_{1,n}(f,W) - \inf_R \sup_{g\in\mathcal{F}_1} L_2(g,R)\right|\leq 3\epsilon. $$ Since $\epsilon>0$ was arbitrary, on $\Omega$ we have $$\procwphat[1](\bX,\bY)\rightarrow \procwp[1](\mu_X,\mu_Y),$$
as required.

Now we wish to prove the concentration result. Since $\mu_X, \mu_Y$ are distributions on $B_d^+$, $W\in\mathcal{O}^d$, and $f$ is 1-Lipschitz, we may apply Hoeffding's inequality:
\begin{align*}
\PP\left[\left|\frac{1}{n}\sum_{i=1}^n f(X_i) - \EE_{X\sim\mu_X} f(X)\right|>t\right]&\leq 2\exp\left(\frac{-nt^2}{2}\right),\\
\PP\left[\left|\frac{1}{n}\sum_{j=1}^n f(WY_j) - \EE_{Y\sim\mu_Y} f(WY)\right|>t\right]&\leq 2\exp\left(\frac{-nt^2}{2}\right).\\
\end{align*}
As such, for a given $(f,W)\in\mathcal{F}_1\times \mathcal{O}^d$, 
$$ \PP[|L_{1,n}(f,W)-L_2(f,W)|>t]\leq 4\exp(-nt^2/2).$$ Consider the $\epsilon$-covering for $\mathcal{F}_1\times \mathcal{O}^d$ with $\epsilon=t$. Then from the consistency argument, we have that
\begin{equation}
\label{eq:procwassprobbound}
\PP\left[\forall\;(g,R)\in\mathcal{F}_1\times\mathcal{O}^d\; |L_{1,n}(g,R)-L_2(g,R)|\leq 3t\right]\geq 1- \mathcal{N}(t,\mathcal{F}_1\times\mathcal{O}^d,d)[4\exp(-nt^2/2)].\end{equation}

Using Lemma~\ref{lem:f1coveringnumber}, we may bound 
\begin{align*}
\mathcal{N}(t,\mathcal{F}_1\times\mathcal{O}^d,d) &\leq \mathcal{N}(t/4,\mathcal{F}_1,\|\cdot\|_{\infty})\mathcal{N}(t/2,\mathcal{O}^d,\|\cdot\|)\\
&\leq \exp\left[C\left(\frac{4\sqrt{d}}{t}\right)^d\right] \left(1+\frac{4\sqrt{d}}{t}\right)^{d^2},
\end{align*}
where we bounded the latter covering number as follows:
$$ \mathcal{N}(\epsilon,\mathcal{O}^d,\|\cdot\|)\leq \mathcal{N}(\epsilon,\mathcal{O}^d,\|\cdot\|_F)\leq \mathcal{N}(\epsilon/\sqrt{d},B_d,\|\cdot\|)^d,$$ since any real orthogonal matrix has $d$ columns in $B_d$.

So the log of the complementary probability on the right hand side of Equation~\ref{eq:procwassprobbound} when $t=C_d n^{-1/(d+2)}$ for a sufficiently large constant $C_d$ is  
\begin{align*}
\log\left[\mathcal{N}(t,\mathcal{F}_1\times\mathcal{O}^d,d)[4\exp(-nt^2/2)]\right] &\leq C\left(\frac{4\sqrt{d}}{t}\right)^d + d^2\log\left(1+\frac{4\sqrt{d}}{t}\right)-\frac{nt^2}{2}+\log(4)\\
&\sim (C(4\sqrt{d}/C_d)^d+1) n^{d/(d+2)} - C_d n^{d/(d+2)}/2\leq -n^{d/(d+2)}.
\end{align*}

Theorem 1 from \cite{rubin2022statistical} proves convergence of the ASE estimates of the latent position matrices to the true ones with respect to the $\|\cdot\|_{2\rightarrow\infty}$ norm. They suppose that scaled latent positions $\{\xi_i\}_{i=1}^n$ are an i.i.d. sample drawn from some latent position distribution $F$ on $\RR^d$, where $F$ has a second moment matrix $\EE_F[\xi\xi^\top]$ that is positive definite with rank $d$, and that the latent positions are defined as $X_i=\sqrt{\rho_n}\xi_i$ for some quantity $\rho_n\in(0,1]$. We will consider the case that $\rho_n\equiv 1$. They further suppose that $A_X\in\{0,1\}^{n\times n}$ is drawn from a generalized random dot product graph, so that $\EE[A_X]= \bX I_{p,q} \bX^\top$; we will consider the special case that $p=d$ and $q=0$. They define a universal constant $c>1$ which gives a power of the logarithmic factor in their assumptions, as well as in the upper bound for the $\|\cdot\|_{2\rightarrow\infty}$ norm. With our simplifying assumptions, the theorem then reads that for any $a>0$, there is a constant $C_a>0$ such that for sufficiently large $n$, with probability at least $1-n^{-a}$,
$$
\min_{W\in\mathcal{O}^d}\|\hat{X}-XW\|_{2,\infty} \leq C_a \frac{\log^c(n)}{\sqrt{n}}.
$$
We note that a more stringent bound might be obtained from Corollary 4.1 of \cite{xie2024entrywise}, but either bound reveals that this is a lower-order term in the present setting.

As such, given $A\sim\mathrm{RDPG}(\bX), B\sim\mathrm{RDPG}(\bY)$ with ASEs $\hat{\bX}, \hat{\bY}$, there are events $\mathcal{A}_n,\mathcal{B}_n$ each with probability at least $1-n^{-a}$ such that
\begin{align*}
\text{On }\mathcal{A}_n: &\min_{W\in\mathcal{O}^d}\|\hat{\bX}-\bX W\|_{2\rightarrow\infty} \leq C_a \frac{\log^c(n)}{\sqrt{n}},\\
\text{On }\mathcal{B}_n: &\min_{W\in\mathcal{O}^d}\|\hat{\bY}-\bY W\|_{2\rightarrow\infty} \leq C_a' \frac{\log^c(n)}{\sqrt{n}}.\\
\end{align*}
So on $\mathcal{A}_n\cap \mathcal{B}_n$, Theorem~\ref{thm:W1_continuous} gives us
$$|\procwp[1](\hat{\mu}_n,\hat{\nu}_n)-\procwp[1](\mu_n,\nu_n)|\leq C_a'' \frac{\log^c(n)}{\sqrt{n}}.$$

From the first part of this theorem, there is an event $\mathcal{C}_n$ with probability at least $1-\exp(-n^{d/(d+2)})$ such that
$$
|\procwp[1](\mu_n,\nu_n)-\procwp[1](\mu,\nu)|\leq C_d n^{-1/(d+2)}.
$$
So with probability at least $1-3n^{-a}$,
$$
|\procwp[1](\hat{\mu}_n,\hat{\nu}_n)-\procwp[1](\mu,\nu)|\leq (C_d+1)n^{-1/(d+2)}.
$$


\end{proof}


\bibliographystyle{plain}

\end{document}